\def\EquationsBySection{\def\theequation
	{\thesection.\arabic{equation}}%
	\@addtoreset{equation}{section}}
\newtheorem{remark}{Remark}[section]
\newtheorem{theorem}[remark]{Theorem}
\newtheorem{corollary}[remark]{Corollary}
\newtheorem{lemma}[remark]{Lemma}
\newcommand\old[1]{}
\title[Liouville type theorems for semilinear subelliptic systems] %Use the shortened version of the full title
      {Liouville type theorems for  subelliptic systems on the
Heisenberg group with general nonlinearity}
      \author{Rong Zhang, Vishvesh Kumar and Michael Ruzhansky}
\address[ Rong Zhang]{School of Mathematical Sciences,
Nanjing Normal University, 
   Nanjing, 210023, China \newline and \newline Department of Mathematics: Analysis, Logic and Discrete Mathematics, Ghent University, Ghent, Belgium}
\email{zhangrong@nnu.edu.cn}
\address[Vishvesh Kumar]{Department of Mathematics: Analysis, Logic and Discrete Mathematics, Ghent University, Ghent, Belgium}
\email{vishveshmishra@gmail.com / vishvesh.kumar@ugent.be}
\address[Michael Ruzhansky]{Department of Mathematics: Analysis, Logic and Discrete Mathematics, Ghent University, Ghent, Belgium\newline and \newline
School of Mathematical Sciences, Queen Mary University of London, United Kingdom}
\email{michael.ruzhansky@ugent.be}
\keywords{Liouville-type theorem; Lane-Emden system; Method
of moving plane; Heisenberg group; Semilinear subelliptic systems, Integral inequalities.}
\subjclass{35R03; 35A01; 35D99;  35B53}
\begin{document}
\maketitle
\begin{abstract} In this paper, we establish Liouville type results for semilinear subelliptic systems associated with the sub-Laplacian  on the Heisenberg group $\mathbb{H}^{n}$
involving two different kinds of general nonlinearities.
The main technique of the proof is the method of moving planes combined with some integral inequalities replacing the role of maximum principles. As a special case,  we obtain the Liouville theorem for the Lane-Emden system on the Heisenberg group $\mathbb{H}^{n}$, which also appears to be a new result in the literature. 
\end{abstract}

% Enter the first author's name and address:
%\centerline{\scshape Rong Zhang$^{a,b}$,\ Vishvesh Kumar$^{b}$,\ Michael Ruzhansky$^{b,c}$}
%\medskip
%{\footnotesize
% please put the address of the first author
% \centerline{$^{a}$ School of Mathematical Sciences,
%Nanjing Normal University}
%   \centerline{ Nanjing, 210023, China}
%   \centerline{$^b$ Department of Mathematics: Analysis, Logic and Discrete Mathematics, }
 %  \centerline{ Ghent University, B 9000 Ghent, Belgium}
%   \centerline{$^c$ School of Mathematical Sciences, Queen Mary University of London, }
 %  \centerline{ Mile End Rd, Stepney, London E1 4NS, United Kingdom}
%} % Do not forget to end the {\footnotesize by the sign }

%\bigskip

% The name of the associate editor will be entered by an editorial staff
% "Communicated by the associate editor name" is not needed for special issue.
 %\centerline{}

%\tableofcontents 

\section{Introduction}

The result related to the nonexistence of solutions, commonly known as Liouville type theorem, serves as a powerful tool to obtain a priori estimates for a system of equations on bounded domains in the Euclidean space $\mathbb{R}^n$ or in the Heisenberg group $\mathbb{H}^n$. Indeed, in the Euclidean setting, one uses the ``blow up" method, also called the rescaling method, developed by Gidas and Spruck \cite{GS81}. After the blowing up, an equation in a bounded domain becomes an equation in the whole Euclidean space. Now, a Liouville type theorem on $\mathbb{R}^n$ followed by a contradiction argument immediately provides a priori bounds. Following this technique, Birindelli et al. \cite{BDC1} have developed the blow up method for the Heisenberg group and proved a priori bounds for the supremum of the solution to the nonlinear Dirichlet subelliptic problem on a bounded domain in $\mathbb{H}^n$ satisfying an intrinsic cone condition. The idea was to  reduce the problem of
establishing the desired a priori estimates on bounded domains to a problem of Liouville type on the full Heisenberg group. The Liouville theorem for sub-Laplacian on the Heisenberg group is well known, see Folland \cite{foll1,foll2,foll3}, and Geller \cite{Gel83} and also \cite[Section 3.2.8]{FR16}.

The main aim of this paper is to study the nonexistence of positive solutions to the following semilinear subelliptic system on the Heisenberg group
$\mathbb{H}^{n}$:
\begin{equation}\label{a1}
\begin{cases}
\ -\Delta_{\mathbb{H}^{n}}u(\xi)=f(v(\xi)) ,& \xi\in\mathbb{H}^{n},\\
\ -\Delta_{\mathbb{H}^{n}}v(\xi)=g(u(\xi)) ,& \xi\in\mathbb{H}^{n},
\end{cases}
\end{equation}
and 
\begin{equation}\label{z1}
\begin{cases}
\ -\Delta_{\mathbb{H}^{n}}u(\xi)=f(u(\xi),v(\xi)) ,& \xi\in\mathbb{H}^{n},\\
\ -\Delta_{\mathbb{H}^{n}}v(\xi)=g(u(\xi),v(\xi)) ,& \xi\in\mathbb{H}^{n},
\end{cases}
\end{equation}
where $\Delta_{\mathbb{H}^{n}}$ is the sub-Laplacian on $\mathbb{H}^{n}, n \geq 2$ and $f,g$ are two continuous functions. 
It is worth noting that a particular instance of \eqref{a1} is the famous Lane-Emden system. In the Euclidean framework, the  Lane-Emden system was investigated by several prominent mathematicians. We will briefly discuss this development as follows.
%Specifically, for a function $u:\mathbb{R}^{n}\rightarrow \mathbb{R}$, we consider the extension $U:\mathbb{R}^{n}\times[0,\infty)\rightarrow \mathbb{R}$ that satisfies
%$$
%\begin{cases}
%\ div(y^{1-2s}\nabla U)=0,& (x,y)\in \mathbb{R}^{n}\times[0,\infty),\\
%\ U(x,0)=u(x), & x\in \mathbb{R}^{n}.
%\end{cases}
%$$
%Then we have
%$$(-\Delta)^{s}u(x)=-C_{n,s}\lim_{y\rightarrow0^{+}}y^{1-2s}\frac{\partial U}{\partial y},\ x\in\mathbb{R}^{n}.$$
Recall that the semi-linear Lane-Emden system in $\mathbb{R}^{n}$ takes the form
\begin{equation}\label{p1}
\begin{cases}
 -\Delta u(x)=v^{p}(x),\\
 -\Delta v(x)=u^{q}(x).
\end{cases}
\end{equation}
It is known that if $(p,q)$ are in critical or supercritical case (i.e. $\frac{1}{p+1}+\frac{1}{q+1}\leq\frac{n-2}{n}$), then system (\ref{p1}) admits some positive classical solutions on $\mathbb{R}^{n}$ (see \cite{40}).
The well-known Lane-Emden conjecture states that in the subcritical case (i.e. $\frac{1}{p+1}+\frac{1}{q+1}>\frac{n-2}{n}$), system (\ref{p1}) does not have a positive classical solution. The conjecture is known to be true for radial solutions in all dimensions (see \cite{30,33}). For non-radial solutions, and $n\leq2$, the conjecture is a consequence of known results (see \cite{31,33, Sou95}).
For $n\geq3$, when $p$ and $q$ are both subcritical case (i.e. $p,q\leq\frac{n+2}{n-2}$, but are not both equal to $\frac{n+2}{n-2}$), the conjecture was also proved (see \cite{17,36}). For $n=3$, it was also proved by Serrin and Zou \cite{39} in the subcritical case (i.e. $\frac{1}{p+1}+\frac{1}{q+1}>\frac{n-2}{n}$), but under the additional assumption that $(u,v)$ have at most polynomial growth
at $\infty$. This assumption was removed by Polacik, Quittner, and Souplet (see \cite{35, souplet}), and therefore, the conjecture was solved for $n=3$.

In the last two decades, Liouville type results for nonlinear subelliptic equations on the Heisenberg group $\mathbb{H}^n$ attracted a lot of attention. In particular,  the subelliptic Lane-Emden inequality
\begin{equation} \label{lei}
    \Delta_{\mathbb{H}^n}u+u^p\leq 0,
\end{equation}
on $\mathbb{H}^n$ was investigated by Birindelli et al. \cite{BDC} and they proved that, for $1<p\leq \frac{Q}{Q-2},$ the above inequality \eqref{lei} does not possess any  positive classical solution. Here $Q=2n+2$ is the homogeneous dimension of the Heisenberg group $\mathbb{H}^n$. Indeed, they went on to show that the exponent $\frac{Q}{Q-2}$ is optimal in the sense that, for $p>\frac{Q}{Q-2},$ the subelliptic inequality \eqref{lei} have  a nontrivial positive solution. Their results were applied to obtain a priori bounds for  subelliptic Dirichlet problems on bounded domains in $\mathbb{H}^n$ (see \cite{BDC1}). The results of \cite{BDC} were extended to the general stratified Lie groups in \cite{CC98}. After these seminal works, there are plenty of works devoted to the study of Liouville type results for the subelliptic problem on $\mathbb{H}^n:$
\begin{equation}\label{lme}
    -\Delta_{\mathbb{H}^n}u=u^p.  
\end{equation}
The Euclidean and elliptic counterpart of \eqref{lme} was studied by Gidas and Spruck \cite{GS}. They showed the nonexistence of a positive solution for the range $0<p<\frac{n+2}{n-2}.$ Later, Chen and Li \cite{CL91} simplified the proof of these results using the so-called ``the method of moving planes" and ``the Kelvin transform".   The method of moving planes has
become a powerful tool in studying qualitative properties for solutions of elliptic
equations and systems. 
The method of moving planes, which goes back to Alexandrov \cite{aa,aa1,aa2} and J. Serrin \cite{Serrin}, has been developed by many researchers later, (see \cite{GNN,zhuo,jems,cpa,chen2,chen3,am, B97, CL09, MCL11}). 

Inspired by the work of Chen and Li \cite{CL91}, the method of moving planes was introduced by Birindelli and Prajapat \cite{BP}  in the setting of the Heisenberg group $\mathbb{H}^n,$ to study Liouville theorems for positive cylindrical solutions to the semilinear subelliptic equation \eqref{lme} on $\mathbb{H}^n$ by suitably modifying the method of moving planes developed by Chen and Li \cite{CL91}.

We recall here that  function $u$ defined on $\mathbb{H}^n$ is called {\it cylindrical}  if, for any $(x,y,t)\in\mathbb{H}^n$, where
$(x,y)\in\mathbb{R}^n\times\mathbb{R}^n$ and $t\in\mathbb{R}$ is the anisotropic direction, we have $u(x,y,t)=u(r,t)$ with $r=\sqrt{|x|^2+|y|^2}$. 

Birindelli and Prajapat \cite{BP} established Gidas and Spruck \cite{GS} type results for \eqref{lme} and proved that \eqref{lme} does not have any positive cylindrical solution for $0<p <\frac{Q+2}{Q-2}.$ The sub-Laplacian $\mathbb{H}^n$  satisfies Bony's maximal principle \cite{bony} but it is not invariant under the usual hyperplane reflection as in the Euclidean case. This made authors \cite{BP} define a new kind of reflection called the  ``$H$-reflection" on the Heisenberg group with respect to the plane $T_\lambda:=\{(x,y,t) \in \mathbb{H}^n: t=\lambda\}.$ The main reason for working only with cylindrical functions in \cite{BP} is due to the fact that the $H$-reflection with respect to the plane $T_\lambda$ leaves the plane invariant but not fixed. Now, in order to settle the Gidas and Spruck conjecture, it is enough to show that any positive solution to \eqref{lei} is cylindrical. Therefore, several attempts have been made in this direction; we refer to \cite{BL03, BP01, GV01} and references therein for more details. In the aforementioned papers dealing with nonexistence results, the maximum principle is exploited along with the method of moving planes and the Kelvin transform or CR transform. We would like to also mention that many of the above results are obtained for general nonlinearity $f(u)$ with some suitable conditions on $f$ such as the Lipschitz continuity although we have discussed here some particular instances. Finally, we mention a recent advance made by Ma and Ou \cite{MO23}, in which the authors  established the Liouville theorem  for the classical solution of  \eqref{lme} for the subcritical case $1<p<\frac{Q+2}{Q-2}.$ They used a suitable generalization of the Jerison-Lee's divergence identity  and then an a priori integral estimate. 

Another very useful method to obtain Liouville type results is the combination of integral inequalities along with the method of moving planes and the Kelvin transform (see \cite{DG04}). These integral inequalities substitute the role of maximal principles in a differential form. The attribution of these integral inequalities goes to the work of Terracini \cite{Ter96, Ter96a}. The main advantage of this method is that one can handle the subcritical, critical, and supercritical cases of the Lane-Emden equation simultaneously \cite{Ter96}.  After these works, the method of moving planes combined with integral inequalities was widely used to obtain Liouville type results for elliptic equations with  general nonlinearities \cite{DG04, yu}. These methods were also adapted to study the nonexistence of solutions to elliptic systems, we cite \cite{BM02, FS05,GL08, yux, G10} and references therein for more details.

Recently, several works have been devoted to the study of the classification of  cylindrical solutions to nonlinear subelliptic  equations with general nonlinearity on the Heisenberg group $\mathbb{H}^n$: 
\begin{equation} \label{seg}
    -\Delta_{\mathbb{H}^n}u=f(u). 
\end{equation}
Indeed, Yu \cite{yujde} extended the method of integral inequalities \cite{DG04, Ter96, Ter96a} in the setting of the Heisenberg group and combined it with the method of moving planes on the Heisenberg group, to establish the Liouville type theorem for  \eqref{seg}. In \cite{Zha17}, a similar type of problem was studied for a subellipitic equation arising from the study of nonlocal equations on the Heisenberg group. For Liouville type theorems obtained using different methods such as a vector field method and test function method we refer to \cite{Xu09, KRTT} and references therein.

Inspired by the above works, it is natural to study subelliptic systems \eqref{a1} and \eqref{z1} on the Heisenberg group.  
Let us first define the notion of a weak solution in this context. 

We say that $(u,v)\in \left(H_{loc}^{1}(\mathbb{H}^{n})\cap C^{0}(\mathbb{H}^{n})\right)\times\left(H_{loc}^{1}(\mathbb{H}^{n})\cap C^{0}(\mathbb{H}^{n})\right)$ is a weak solution of system  \eqref{z1} if it satisfies
\begin{equation}\label{de}
\int_{\mathbb{H}^{n}}\nabla_{\mathbb{H}^n} u\nabla_{\mathbb{H}^n}\varphi=\int_{\mathbb{H}^{n}}f(u,v)\varphi,\quad \varphi\in C_{c}^{1}(\mathbb{H}^n),
\end{equation}
\begin{equation}\label{de1}
\int_{\mathbb{H}^{n}}\nabla_{\mathbb{H}^n} v\nabla_{\mathbb{H}^n}\varphi=\int_{\mathbb{H}^{n}}g(u,v)\varphi,\quad \varphi\in C_{c}^{1}(\mathbb{H}^n).
\end{equation}

In this paper, we are concerned with the nonexistence result of the cylindrical weak solution to  the semilinear subelliptic
systems systems \eqref{a1} and \eqref{z1} with general nonlinearity. The coupled nonlinearities in such systems make it difficult to find a starting point to apply the method of moving planes. To overcome this difficulty,  based on CR transform \cite{JL} together with the method of moving planes in $\mathbb{H}^n,$ we use integral inequalities of Terracini \cite{ Ter96, Ter96a}, which earlier proved to be helpful in \cite{yujde} for the  subellipic equation \eqref{seg}. Moreover, since the nonlinearities $f$ and $g$ are only assumed to be continuous rather than Lipschitz continuous, the classical Bony's maximal principle \cite{bony}  can not be applied. This is one of the major reasons why we work with integral inequalities 
combined with the method of moving planes.

In fact, by using the aforementioned tools, we establish  the following results in this direction.

\begin{theorem} \label{th1}
Let $n \geq 2$ and let  $(u,v)\in \left(H_{loc}^{1}(\mathbb{H}^{n})\cap C^{0}(\mathbb{H}^{n})\right)\times\left(H_{loc}^{1}(\mathbb{H}^{n})\cap C^{0}(\mathbb{H}^{n})\right)$
 be a 
positive cylindrical solution to system \eqref{a1}. Assume that $f,g:[0,+\infty)\rightarrow \mathbb{R}$ are two continuous functions satisfying the following conditions:

$(i)$ $f(t)$ and $g(t)$ are nondecreasing in $(0,+\infty)$;

$(ii)$ $h(t):=\frac{f(t)}{t^{\frac{Q+2}{Q-2}}}$ and $k(t):=\frac{g(t)}{t^{\frac{Q+2}{Q-2}}}$ are nonincreasing in $(0,+\infty)$;

$(iii)$ either $h$ or $k$ is not a constant function on $(0, \sup_{\xi \in \mathbb{H}^n} v(\xi))$ and $(0, \sup_{\xi \in \mathbb{H}^n} u(\xi)),$ respectively. 

Then   $(u,v)\equiv(C_1, C_2)$ for some constants $C_1$ and $C_2$ with $f(C_1)=0$ and $g(C_2)=0.$
\end{theorem}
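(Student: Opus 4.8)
The plan is to adapt the moving-plane-with-integral-inequalities machinery of Yu \cite{yujde} to the coupled system \eqref{a1}, working on the sphere via the CR transform so that the $H$-reflection becomes a genuine reflection. First I would apply the Cayley/CR transform $\mathcal{C}:\mathbb{H}^n\to \mathbb{S}^{2n+1}\setminus\{pt\}$ of Jerison--Lee \cite{JL} to transport the system: if $(u,v)$ is a positive cylindrical solution of \eqref{a1}, then the functions $\tilde u = (J_{\mathcal C})^{(Q-2)/Q}\,(u\circ \mathcal C^{-1})$ and similarly $\tilde v$ solve a conformally transformed system on $\mathbb{S}^{2n+1}$ of the form $-\mathcal{L}_{\mathbb{S}}\tilde u + c_Q \tilde u = \tilde u^{(Q+2)/(Q-2)} h\!\left(A\tilde v\right)$ (and symmetrically for $\tilde v$), where $\mathcal{L}_{\mathbb S}$ is the CR sub-Laplacian, $c_Q>0$ the subcritical constant, and $A$ the conformal factor. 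Crucially, condition $(i)$ (monotonicity of $f,g$) plus $(ii)$ (the quotients $h,k$ nonincreasing) will be inherited in a form suitable for the moving-plane comparison: the nonlinearities split into a conformally covariant "critical" part times a monotone-in-the-right-direction factor. The cylindrical hypothesis is what lets the $H$-reflection descend to an ordinary reflection of $\mathbb{S}^{2n+1}$ about a hyperplane $T_\lambda$, leaving it pointwise fixed, so the method of moving planes is available.

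Next, for each admissible $\lambda$ I would define the reflected pair $u_\lambda, v_\lambda$ and the sets $\Sigma_\lambda$ where $u_\lambda < u$ (resp. $v_\lambda < v$), and instead of Bony's maximum principle — unavailable since $f,g$ are merely continuous — use the Terracini-type integral inequality \cite{Ter96,Ter96a} established in the Heisenberg setting in \cite{yujde}: testing the weak equation \eqref{de}--\eqref{de1} against $(u-u_\lambda)^+$ and $(v-v_\lambda)^+$ and using the monotonicity in $(i)$--$(ii)$, together with the conformal Sobolev/Hardy--Littlewood--Sobolev inequality on $\mathbb{H}^n$ (or $\mathbb{S}^{2n+1}$) to absorb the critical-growth terms, one obtains an estimate of the shape
\begin{equation}\label{planplan}
\|\nabla_{\mathbb{H}^n}(u-u_\lambda)^+\|_{L^2}^2 + \|\nabla_{\mathbb{H}^n}(v-v_\lambda)^+\|_{L^2}^2 \le C\big(|\Sigma_\lambda^u|+|\Sigma_\lambda^v|\big)^{2/Q}\Big(\|\nabla_{\mathbb{H}^n}(u-u_\lambda)^+\|_{L^2}^2 + \|\nabla_{\mathbb{H}^n}(v-v_\lambda)^+\|_{L^2}^2\Big).
\end{equation}
For $\lambda$ near the start of the process the measures $|\Sigma_\lambda^u|,|\Sigma_\lambda^v|$ are small, forcing the left side to vanish and giving $u\le u_\lambda$, $v\le v_\lambda$ there; then a standard continuity-and-openness argument moves the plane until one reaches a limiting position $\lambda_0$, where one concludes either symmetry of $(u,v)$ about $T_{\lambda_0}$ in the transformed picture, or that the process never stops, which after transporting back yields that $(u,v)$ is independent of the $t$-variable. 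Running the argument in orthogonal directions and using the cylindrical structure pins down $u,v$ as functions of a single radial variable.

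At that point I would feed the symmetry/one-dimensional reduction into a Pohozaev-type or direct ODE argument: the extra rigidity from condition $(ii)$ — $h$ or $k$ strictly non-constant on the relevant range, which is hypothesis $(iii)$ — will be exactly what rules out a genuinely nonconstant radial solution, because the conformally covariant term would need $h,k$ constant (the Jin--Li--Xiong / Jerison--Lee bubble case) to support a nonconstant solution. Hence $u,v$ must be constants $C_1,C_2$, and plugging back into \eqref{a1} forces $f(C_1)=g(C_2)=0$. The main obstacle I anticipate is the coupling: getting a \emph{single} closed integral inequality like \eqref{planplan} for the pair requires carefully pairing the test functions so that the cross terms coming from $f(v)$ and $g(u)$ combine with the monotonicity $(i)$ to have a favorable sign, rather than controlling each equation separately — this is where the assumption that $f,g$ are nondecreasing (not just the quotients monotone) does the essential work, and where the argument of \cite{yujde} for a single equation must be genuinely upgraded. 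A secondary technical point is verifying that the CR transform preserves the regularity class $H^1_{loc}\cap C^0$ and the cylindrical symmetry, and that the decay of $\tilde u,\tilde v$ near the puncture is good enough to justify all integrations by parts in the Terracini inequality.
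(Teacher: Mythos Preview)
Your overall strategy---CR transform, Terracini-type integral inequalities in place of the maximum principle, moving planes---matches the paper's. But two steps diverge, and the second is a genuine gap.

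First, a route difference: the paper does \emph{not} go to the sphere. It uses the CR \emph{inversion} on $\mathbb{H}^n$ itself (Lemma~\ref{lemm1}), obtaining $\bar u(\xi)=|\xi|_{\mathbb{H}^n}^{-(Q-2)}u(\tilde\xi)$ on $\mathbb{H}^n\setminus\{0\}$, which satisfies \eqref{cc4}. The moving plane is then run only in the $t$-direction via the $H$-reflection $\xi_\lambda=(y,x,2\lambda-t)$; this is the \emph{only} direction in which the sub-Laplacian is reflection-invariant, so ``running the argument in orthogonal directions'' is not available on $\mathbb{H}^n$. You do not need it anyway: the cylindrical hypothesis already gives $u=u(r,t)$, so once $t$-independence is established, $u,v$ are radial functions on $\mathbb{R}^{2n}$. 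A related technical point: after CR inversion the half-spaces $\Sigma_\lambda$ are unbounded, so the small factor in the integral inequality is not $|\Sigma_\lambda^u|^{2/Q}$ as in your \eqref{planplan} but $\big(\int_{\Sigma_\lambda^v}|\xi|_{\mathbb{H}^n}^{-2Q}\,d\xi\big)^{2/Q}$ (see \eqref{w5}--\eqref{w6}); the decay of the weight, not the measure, is what makes the process start and continue. Also, the paper does not collapse to a single closed inequality: it gets the coupled pair \eqref{w5}--\eqref{w6} and composes them.

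The real gap is your endgame. After the moving-plane argument shows $\bar u,\bar v$ are even in $t$ about every origin---hence $u,v$ are $t$-independent---the paper does \emph{not} invoke a Pohozaev identity or an ODE argument. It observes that $(u,v)$ now solves the Euclidean system \eqref{d11} on $\mathbb{R}^{2n}$ and applies the Liouville theorem of Guo--Liu \cite[Theorem~1.1]{GL08}, which requires dimension $\ge 3$; this is precisely why the hypothesis $n\ge 2$ is needed (so that $2n\ge 3$). Your proposed ``direct ODE argument'' would have to reprove, for a coupled system with merely continuous $f,g$, what in the scalar case is already the Gidas--Spruck/Chen--Li theorem and in the system case is \cite{GL08}; appealing to a Jerison--Lee bubble classification does not help once you are in $\mathbb{R}^{2n}$. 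Condition~$(iii)$ is used twice in the paper: first (Lemma~\ref{lemm5}) to force the limiting plane to be $T_0$ (if $\lambda_1>0$ one deduces $h$ is constant on the relevant range, contradicting~$(iii)$), and second to meet the hypotheses of \cite{GL08} at the Euclidean stage. Your plan conflates these two uses and leaves the second unsubstantiated.
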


Based on Theorem \ref{th1}, we state the following interesting consequence about the nonexistence of positive cylindrical solutions to the Lane-Emden system  on the Heisenberg group $\mathbb{H}^n$ in the subcritical case for any $n \geq 1$ ({\it cf.} Remark \ref{rem1}).

\begin{corollary} \label{co1}
Let $n \geq 1.$  Consider the following Lane-Emden system

\begin{equation}\label{o1}
\begin{cases}
\ -\Delta_{\mathbb{H}^{n}}u(\xi)=v^{p}(\xi) ,& \xi\in\mathbb{H}^{n},\\
\ -\Delta_{\mathbb{H}^{n}}v(\xi)=u^{q}(\xi) ,& \xi\in\mathbb{H}^{n}.
\end{cases}
\end{equation}
Then,  
 there is no positive cylindrical solution  $$(u, v)\in \left(H_{loc}^{1}(\mathbb{H}^{n})\cap C^{0}(\mathbb{H}^{n})\right)\times\left(H_{loc}^{1}(\mathbb{H}^{n})\cap C^{0}(\mathbb{H}^{n})\right)$$   
 to \eqref{o1} for\, $0<p,q<\frac{Q+2}{Q-2}$.
\end{corollary}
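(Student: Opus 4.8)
The plan is to derive Corollary~\ref{co1} from Theorem~\ref{th1} by a straightforward verification that the Lane--Emden nonlinearities $f(t)=t^{p}$ and $g(t)=t^{q}$ satisfy hypotheses $(i)$--$(iii)$ of the theorem whenever $0<p,q<\frac{Q+2}{Q-2}$, together with a reduction of the case $n=1$ to the case $n\geq 2$. First I would observe that for $p,q>0$ the maps $t\mapsto t^{p}$ and $t\mapsto t^{q}$ are strictly increasing on $(0,\infty)$, which gives $(i)$. Next, with $h(t)=f(t)/t^{(Q+2)/(Q-2)}=t^{\,p-(Q+2)/(Q-2)}$ and $k(t)=t^{\,q-(Q+2)/(Q-2)}$, the subcriticality $p,q<\frac{Q+2}{Q-2}$ forces the exponents $p-\frac{Q+2}{Q-2}$ and $q-\frac{Q+2}{Q-2}$ to be strictly negative, so $h$ and $k$ are strictly decreasing on $(0,\infty)$; this yields $(ii)$, and moreover it yields $(iii)$ automatically, since a strictly decreasing function is nowhere locally constant and the suprema $\sup_{\mathbb{H}^n}u$ and $\sup_{\mathbb{H}^n}v$ of a positive solution are positive (possibly $+\infty$), so the relevant intervals are nonempty. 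Hence for $n\geq 2$, Theorem~\ref{th1} applies and gives $(u,v)\equiv(C_1,C_2)$ with $C_1^{p}=f(C_1)=0$ and $C_2^{q}=g(C_2)=0$, forcing $C_1=C_2=0$, which contradicts positivity; therefore no positive cylindrical solution exists for $n\geq 2$.

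The remaining point is the case $n=1$, flagged in the statement by the reference to Remark~\ref{rem1}. Here Theorem~\ref{th1} as stated requires $n\geq 2$, so I would handle $n=1$ separately. The natural approach is a lifting/dimension-extension argument: given a positive cylindrical solution $(u,v)$ on $\mathbb{H}^1$, one builds a positive cylindrical solution $(\tilde u,\tilde v)$ of the same system on $\mathbb{H}^2$ (or any $\mathbb{H}^n$ with $n\geq 2$) by setting $\tilde u(x,y,t)=u(r,t)$, $\tilde v(x,y,t)=v(r,t)$ with $r=\sqrt{|x|^2+|y|^2}$, where now $(x,y)\in\mathbb{R}^2\times\mathbb{R}^2$. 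One must check that the cylindrical sub-Laplacian, when written in the radial variable $r$ and the vertical variable $t$, depends on $n$ only through the coefficient of the first-order radial term; since $u$ is already cylindrical, $\tilde u$ solves the corresponding equation on $\mathbb{H}^2$ after adjusting for this coefficient, and the regularity $\tilde u,\tilde v\in H^1_{loc}\cap C^0$ is preserved. Applying the $n\geq 2$ conclusion to $(\tilde u,\tilde v)$ gives a contradiction, hence none exists on $\mathbb{H}^1$ either. I expect this dimensional reduction to be the main obstacle, because one must verify carefully that the lifted pair genuinely solves the Lane--Emden system in the weak sense \eqref{de}--\eqref{de1} on the higher-dimensional group and that the crucial cylindrical structure (and the homogeneous dimension entering the critical exponent) is respected; the exponent $\frac{Q+2}{Q-2}$ with $Q=2n+2$ changes with $n$, so one has to confirm that subcriticality on $\mathbb{H}^1$ transfers appropriately, most cleanly by noting that the lifted solution being cylindrical makes it effectively a function of the same two variables $(r,t)$ and that the ODE-type inequality driving the moving-plane argument is what matters.

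An alternative, cleaner route for $n=1$ that avoids the lifting subtlety is to inspect the proof of Theorem~\ref{th1} and note where $n\geq 2$ is actually used; if, as is typical in these moving-plane-plus-integral-inequality arguments, the restriction $n\geq 2$ is only needed to make the CR inversion (Kelvin-type transform) and the decay estimates at infinity work, one checks that for the specific power nonlinearities $v^p,u^q$ with $0<p,q<\frac{Q+2}{Q-2}$ these ingredients still go through when $n=1$, $Q=4$, and thus the theorem's conclusion holds verbatim. In either formulation the endgame is identical: the only constant solutions allowed are $(C_1,C_2)$ with $C_1^p=C_2^q=0$, i.e. $(0,0)$, so a strictly positive solution cannot exist. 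I would present the short verification of $(i)$--$(iii)$ in the main text and relegate the $n=1$ reduction to the discussion around Remark~\ref{rem1}, since that is where the statement already points the reader.
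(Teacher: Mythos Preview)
Your treatment of the case $n\ge 2$ is correct and coincides with the paper's: verify that $f(t)=t^{p}$, $g(t)=t^{q}$ satisfy $(i)$--$(iii)$ of Theorem~\ref{th1} when $0<p,q<\frac{Q+2}{Q-2}$, and observe that the only constant solution compatible with the conclusion is $(0,0)$.

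The gap is in your handling of $n=1$. Your lifting argument (a) does not work. The cylindrical sub-Laplacian reads
\[
\Delta_{\mathbb{H}^{n}}u(r,t)=u_{rr}+\frac{2n-1}{r}\,u_{r}+4r^{2}u_{tt},
\]
so if $(u,v)$ solves the Lane--Emden system on $\mathbb{H}^{1}$, the lifted pair $(\tilde u,\tilde v)$ on $\mathbb{H}^{2}$ satisfies $-\Delta_{\mathbb{H}^{2}}\tilde u=\tilde v^{\,p}-\frac{2}{r}\,u_{r}$, which is \emph{not} the Lane--Emden system; there is no way to ``adjust for this coefficient'' while keeping the equation. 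Worse, the critical exponent drops from $\frac{Q+2}{Q-2}=3$ on $\mathbb{H}^{1}$ to $2$ on $\mathbb{H}^{2}$, so subcriticality on $\mathbb{H}^{1}$ (say $p=q=\tfrac{5}{2}$) is supercriticality on $\mathbb{H}^{2}$, and Theorem~\ref{th1} would not apply to the lift even if it solved the system.

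Your alternative (b) is closer in spirit but misidentifies the obstruction: the restriction $n\ge 2$ is \emph{not} needed for the CR inversion, the decay estimates, or any of the moving-plane Lemmata~\ref{lemm1}--\ref{lemm5}; Remark~\ref{rem1} says explicitly that these hold for all $n\ge 1$. The only place $n\ge 2$ enters is the very last step of the proof of Theorem~\ref{th1}, where one lands on the Euclidean system \eqref{d11} in $\mathbb{R}^{2n}$ and invokes \cite[Theorem~1.1]{GL08}, which requires dimension $\ge 3$. The paper's route for $n=1$ is therefore: rerun Lemmata~\ref{lemm1}--\ref{lemm5} verbatim (valid for $n=1$) to conclude that $u,v$ are independent of $t$ and hence solve the Euclidean Lane--Emden system in $\mathbb{R}^{2}$; then, instead of \cite{GL08}, cite the known two-dimensional Lane--Emden Liouville theorem for the specific power nonlinearities \cite{31,39,Sou95,souplet}. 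This is the missing ingredient in your $n=1$ argument.
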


We note that Pohozaev and V\'eron \cite{PV00} obtained a similar result for general solution as of Corollary \ref{co1} in the case of inequality for the range $1<p,q\leq \frac{Q}{Q-2}.$ They used test function method to established these Liouville type theorems. These results were recently extended to Kirchhoff type systems on $\mathbb{H}^n$   by the third author and his collaborators \cite{KRTT} using the test function method. 

The above Theorem \ref{th1} can be extended to a more general case given by the system \eqref{z1}, that is both $f$ and $g$ depend on $(u,v)$. More precisely, we have the following  Liouville type theorem for the system \eqref{z1}.

\begin{theorem} \label{th2}
Let $n \geq 2$ and let $(u,v)\in \left(H_{loc}^{1}(\mathbb{H}^{n})\cap C^{0}(\mathbb{H}^{n})\right)\times\left(H_{loc}^{1}(\mathbb{H}^{n})\cap C^{0}(\mathbb{H}^{n})\right)$
 be a 
positive cylindrical solution to system \eqref{z1}. Assume that $f,g:[0,+\infty)\times[0,+\infty)\rightarrow \mathbb{R}$ are two continuous positive functions satisfying the following conditions:

$(i)$ $f(s,t)$ and $g(s,t)$ are nondecreasing in $t$ for fixed $s$ and, $f(s,t)$ and $g(s,t)$ are nondecreasing in $s$ for fixed $t$;

$(ii)$ there exist $p_{1} \geq 0,q_{1}>0$ with $p_{1}+q_{1}=\frac{Q+2}{Q-2}$ such that $\frac{f(s,t)}{s^{p_{1}}t^{q_{1}}}$ is nonincreasing in $t$ for fixed $s$ and $\frac{f(s,t)}{s^{p_{1}}t^{q_{1}}}$ is nonincreasing in $s$ for fixed $t$;

$(iii)$ there exist $p_{2}>0,q_{2}\geq0$ with $p_{2}+q_{2}=\frac{Q+2}{Q-2}$ such that $\frac{g(s,t)}{s^{p_{2}}t^{q_{2}}}$ is nonincreasing in $t$ for fixed $s$ and $\frac{g(s,t)}{s^{p_{2}}t^{q_{2}}}$ is nonincreasing in $s$ for fixed $t$;

$(iv)$ either $f$ or $g$ is not a constant multiple of $s^{p_1}t^{q_1}$ or $s^{p_2} t^{q_2}$, respectively. 

Then, we have $(u,v)\equiv(C_1,C_2)$ for some constants $C_1$ and $C_2$ such that $f(C_1, C_2)=0$ and $g(C_1, C_2)=0$.
\end{theorem}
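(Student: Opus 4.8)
The plan is to transplant to the coupled system \eqref{z1} the mechanism behind Theorem~\ref{th1}: a CR/Kelvin inversion, the method of moving planes along the anisotropic direction, and the Terracini-type integral inequalities of \cite{yujde} in place of a maximum principle, the new feature being that the two product weights $s^{p_i}t^{q_i}$ fixed by $(ii)$--$(iii)$ replace the single monomial and must be handled simultaneously. \textbf{Step 1: CR inversion.} First I would apply the Jerison--Lee inversion \cite{JL} $\xi\mapsto\xi^{*}$ with Kor\'anyi gauge $|\xi|_{\mathbb{H}}=(|z|^{4}+t^{2})^{1/4}$ and set $\bar u=|\xi|_{\mathbb{H}}^{-(Q-2)}u(\xi^{*})$, $\bar v=|\xi|_{\mathbb{H}}^{-(Q-2)}v(\xi^{*})$; these are positive, cylindrical, and decay like $|\xi|_{\mathbb{H}}^{-(Q-2)}$ at infinity, while $|\xi|_{\mathbb{H}}^{Q-2}\bar u=u(\xi^{*})$ and $|\xi|_{\mathbb{H}}^{Q-2}\bar v=v(\xi^{*})$ remain in the range of $u,v$. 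Since the inversion is conformal for $\Delta_{\mathbb{H}^n}$ with factor $|\xi|_{\mathbb{H}}^{-(Q+2)}$ and $p_i+q_i=\tfrac{Q+2}{Q-2}$, writing $F(s,t)=f(s,t)s^{-p_{1}}t^{-q_{1}}$ and $G(s,t)=g(s,t)s^{-p_{2}}t^{-q_{2}}$ makes the gauge weight cancel exactly, so away from the pole
\[
-\Delta_{\mathbb{H}^n}\bar u=\bar u^{p_{1}}\bar v^{q_{1}}\,F\big(|\xi|_{\mathbb{H}}^{Q-2}\bar u,\,|\xi|_{\mathbb{H}}^{Q-2}\bar v\big),\qquad -\Delta_{\mathbb{H}^n}\bar v=\bar u^{p_{2}}\bar v^{q_{2}}\,G\big(|\xi|_{\mathbb{H}}^{Q-2}\bar u,\,|\xi|_{\mathbb{H}}^{Q-2}\bar v\big),
\]
with $F,G$ positive and, by $(ii)$--$(iii)$, nonincreasing in each argument.

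\textbf{Step 2: moving planes and coupled integral inequalities.} Next I would move the planes $T_{\lambda}=\{t=\lambda\}$ using the $H$-reflection $\rho_{\lambda}$ of \cite{BP}, which preserves $\Delta_{\mathbb{H}^n}$ and, on the relevant cap $\Sigma_{\lambda}$, orders the gauge norms of $\xi$ and $\rho_{\lambda}\xi$ so that the monotonicity of $F,G$ acts in our favour. Testing the $\bar u$-equation minus its reflection against $w^{u}_{\lambda}:=(\bar u-\bar u_{\lambda})^{+}$ and the $\bar v$-equation against $w^{v}_{\lambda}:=(\bar v-\bar v_{\lambda})^{+}$, condition $(i)$ and an elementary convexity/mean value estimate for the monomials bound, on $\{w^{u}_{\lambda}>0\}$, the difference of the right-hand sides from above by $C\big(\Phi_{1}w^{u}_{\lambda}+\Phi_{2}w^{v}_{\lambda}\big)$ plus a nonpositive remainder coming from $F$ being nonincreasing, where $\Phi_{1},\Phi_{2}$ are built from $\bar u^{p_{i}-1}\bar v^{q_{i}}$ and $\bar u^{p_{i}}\bar v^{q_{i}-1}$ and, after the inversion, lie in $L^{Q/2}$ over the caps. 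Discarding the remainder and invoking the Folland--Stein (Sobolev) inequality on $\mathbb{H}^n$ yields
\[
\int_{\Sigma_{\lambda}}|\nabla_{\mathbb{H}^n}w^{u}_{\lambda}|^{2}\le C\Big(\int_{A_{\lambda}}(\Phi_{1}^{Q/2}+\Phi_{2}^{Q/2})\Big)^{2/Q}\Big(\int_{\Sigma_{\lambda}}|\nabla_{\mathbb{H}^n}w^{u}_{\lambda}|^{2}+\int_{\Sigma_{\lambda}}|\nabla_{\mathbb{H}^n}w^{v}_{\lambda}|^{2}\Big),
\]
together with the symmetric inequality for $w^{v}_{\lambda}$, where $A_{\lambda}=\{\xi\in\Sigma_{\lambda}:w^{u}_{\lambda}(\xi)+w^{v}_{\lambda}(\xi)>0\}$; adding the two closes the system.

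\textbf{Step 3: start, continuation, conclusion.} Because $\bar u,\bar v$ decay at infinity, $\int_{A_{\lambda}}(\Phi_{1}^{Q/2}+\Phi_{2}^{Q/2})$ is small for $|\lambda|$ large, so the right-hand side is absorbed and $w^{u}_{\lambda}\equiv w^{v}_{\lambda}\equiv0$, i.e.\ $\bar u\le\bar u_{\lambda}$, $\bar v\le\bar v_{\lambda}$ on $\Sigma_{\lambda}$. A standard open--closed continuation (as in \cite{BP,yujde}) then moves the plane to a critical position; the pole of the inversion obstructs pushing this critical $\lambda$ below $0$ when moving downward and above $0$ when moving upward, and at it one obtains symmetry of $(\bar u,\bar v)$ about the corresponding plane (otherwise the plane could still be advanced). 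Feeding this symmetry back into the equations, together with the $\Delta_{\mathbb{H}^n}$-invariance of $\rho_{\lambda}$ and the gauge-norm comparison, forces $F$ and $G$ to be constant along rays through the origin on the range of $(u,v)$; repeating the argument after left-translating $u,v$ by an arbitrary central element (which preserves cylindricity and \eqref{z1}) then leaves only two possibilities: either $(\bar u,\bar v)$ is, up to translation, the Jerison--Lee extremal profile -- excluded by $(iv)$ -- or $u,v$ are independent of $t$. In the latter case $\Delta_{\mathbb{H}^n}$ reduces, on functions of $r=\sqrt{|x|^{2}+|y|^{2}}$, to the Euclidean radial Laplacian in dimension $2n$, so $-u''-\tfrac{2n-1}{r}u'=f(u,v)$; since $f$ is positive and nondecreasing in each variable, $-\Delta_{\mathbb{R}^{2n}}u\ge f(0,0)>0$, and no positive radial function can be a supersolution of this (averaging over spheres drives it to $-\infty$). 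Hence $u\equiv C_{1}$, $v\equiv C_{2}$, and inserting into \eqref{z1} gives $f(C_{1},C_{2})=g(C_{1},C_{2})=0$.

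\textbf{Main obstacle.} The crux is the coupling in Step 2: each integral inequality bounds only one Dirichlet energy by the sum of both, so one must add them and absorb, which succeeds precisely because $p_{i}+q_{i}=\tfrac{Q+2}{Q-2}$ annihilates the conformal weight and because the monotonicity in \emph{both} variables in $(ii)$--$(iii)$ keeps every discarded term of the right sign; pinning down the constants in $\Phi_{1},\Phi_{2}$ and checking their $L^{Q/2}$-membership over the caps -- which fails near the pole, so the caps must stay away from it until the critical stage -- is the most delicate bookkeeping. A second essential point is the precise role of $(iv)$: it is exactly what rules out the critical/extremal profile at the end of the moving-plane analysis, so that only the constant solution remains.
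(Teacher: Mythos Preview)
Your Steps~1--2 and the start/continuation part of Step~3 match the paper's argument (its Lemmas~4.1--4.3): CR inversion, the coupled integral inequalities obtained by testing against $(\bar u-\bar u_\lambda)^+$ and $(\bar v-\bar v_\lambda)^+$ with the four-case sign analysis, and absorption driven by $\int_{\Sigma_\lambda}|\xi|_{\mathbb H}^{-2Q}\to 0$. Your diagnosis of why $p_i+q_i=\tfrac{Q+2}{Q-2}$ is precisely what makes the gauge weight cancel, and why adding the two inequalities closes the coupling, is on the mark.

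The endgame, however, departs from the paper and has a gap. After reducing to $\mathbb R^{2n}$, the paper invokes the Euclidean system result \cite[Theorem~3.1]{GL08}, checking that the monotonicity hypotheses transfer because $\tfrac{2n+2}{2n-2}\ge\tfrac{Q+2}{Q-2}$ (this is exactly where the restriction $n\ge 2$ is used). Your shortcut $-\Delta_{\mathbb R^{2n}}u\ge f(0,0)>0$ is not justified: positivity of $f$ is only ever used where the system evaluates it, i.e.\ on $(0,\infty)^2$ (see the paper's use of positivity in the last inequality of case~(i) in Lemma~4.1), so $f(0,0)=0$ is allowed---for instance $f(s,t)=s^{p_1}t^{q_1}h(s,t)$ with $h$ bounded, continuous and strictly decreasing satisfies all hypotheses yet vanishes on the axes. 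And even when $f(0,0)>0$, the spherical-average argument yields \emph{nonexistence}, not ``$u\equiv C_1$''; your ``Hence'' is a non sequitur (though the theorem would then hold vacuously). Separately, the ``Jerison--Lee extremal profile'' dichotomy at the critical plane is not the mechanism the paper uses for the system: its Lemma~4.4 (mirroring Lemma~3.5) simply shows that $\lambda_1>0$ together with $U_{\lambda_1}\equiv V_{\lambda_1}\equiv 0$ forces $f/(s^{p_1}t^{q_1})$ to be constant on the range of $(u,v)$, contradicting~(iv) directly; no classification of profiles is needed or available here.
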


%\begin{corollary} \label{co2}
%Let $(u,v)\in \left(H_{loc}^{1}(\mathbb{H}^{n})\cap C^{0}(\mathbb{H}^{n})\right)\times\left(H_{loc}^{1}(\mathbb{H}^{n})\cap C^{0}(\mathbb{H}^{n})\right)$ be a 
%nonnegative cylindrical solution to the following system
%\begin{equation}\label{o2}
%\begin{cases}
%\ -\Delta_{\mathbb{H}^{n}}u(\xi)=u^{p_{1}}(\xi)v^{q_{1}}(\xi) ,& \xi\in\mathbb{H}^{n},\\
%\ -\Delta_{\mathbb{H}^{n}}v(\xi)=u^{p_{2}}(\xi)v^{q_{2}}(\xi) ,& \xi\in\mathbb{H}^{n},
%\end{cases}
%\end{equation}
%for some $0<p_{i},q_{i}<\frac{Q+2}{Q-2}$ and $p_{i}+q_{i}=\frac{Q+2}{Q-2},i=1,2$. Then, we have  $(u,v)\equiv(0,0)$.
%\end{corollary}
\begin{remark} \label{rem1} We would like to discuss the condition  $n \geq 2$ in our main results (Theorem \ref{th1} and Theorem \ref{th2}). We need to impose this technical condition due to the unavailability of corresponding  results in the Euclidean space $\mathbb{R}^N$  for $N= 2$ (see \cite{GL08}). We emphasise here that condition $n \geq 2$ is only used in the proof of main results because of the aforementioned reason, although all the supporting Lemmata hold for all $n \geq 1.$ 
\end{remark}

The paper is organized as follows: In Section 2 we collect some well-known preliminaries related to the analysis of the Heisenberg group $\mathbb{H}^{n}$. Section 3 is devoted to the proof of Theorem \ref{th1}  by using the method of moving planes along with newly developed integral inequalities. In Section 4, we will present   the proof
of Theorem \ref{th2}.

\section{Preliminaries: the Heisenberg group}
In this section, we introduce some definitions, set up notation, and recall some basic results concerning the Heisenberg group $\mathbb{H}^{n}$. We refer to \cite{FS82, BLU07, Thang, FR16, RS19} for a complete overview of the material presented here.

The Heisenberg group $\mathbb{H}^{n}$ is $(\mathbb{R}^{2n+1}, \circ),\,\,n\geq1$, endowed with the group law $\circ$ defined by
\begin{equation}\label{f1}
\xi \circ \bar{\xi}:=\bigg(x+\bar{x},y+\bar{y},t+\bar{t}+2\sum_{i=1}^{n}(x_{i}\bar{y}_{i}-y_{i}\bar{x}_{i})\bigg),
\end{equation}
where $\xi:=(x,y,t)=(x_{1},\cdot\cdot\cdot,x_{n},y_{1},\cdot\cdot\cdot,y_{n},t)\in\mathbb{R}^{n}\times\mathbb{R}^{n}\times\mathbb{R}$ and $\bar{\xi}=(\bar{x},\bar{y},\bar{t})$.
%To denote the elements of $\mathbb{H}^{n}$ we will use the notation $(z,t)\in \mathbb{C}^{n}\times\mathbb{R}$ or $(x,y,t)\in\mathbb{R}^{n}\times\mathbb{R}^{n}\times\mathbb{R}$, where $z=x+iy$, $x=(x_{1},\cdot\cdot\cdot,x_{n})$, $y=(y_{1},\cdot\cdot\cdot,y_{n})$.

Denote by $\delta_{\tau}$ the dilations on $\mathbb{H}^{n}$ defined as
\begin{equation}\label{f2}
\delta_{\tau}(\xi)=(\tau x,\tau y,\tau^{2}t),\quad \tau>0,
\end{equation}
so that $\delta_{\tau}(\bar{\xi}\circ\xi)=\delta_{\tau}(\bar{\xi})\circ\delta_{\tau}(\xi).$

The left invariant vector fields $\{X_{1},\cdot\cdot\cdot,X_{n},Y_{1},\cdot\cdot\cdot,Y_{n},T\}$ corresponding to $\mathbb{H}^{n}$ are defined by

$$X_{i}=\frac{\partial}{\partial x_{i}}+2y_{i}\frac{\partial}{\partial t},\quad Y_{j}=\frac{\partial}{\partial y_{j}}-2x_{j}\frac{\partial}{\partial t},\quad i, j=1,2, \ldots,n, \,\,\text{and}\,\,\,\,T=\frac{\partial}{\partial t},$$
forming a basis for the Lie algebra of $\mathbb{H}^n$. It is easy to check that
$$[X_{i},Y_{j}]=-4T\delta_{ij},\ [X_{i},X_{j}]=[Y_{i},Y_{j}]=0,\quad i,j=1,\cdot\cdot\cdot,n.$$

%We note that $X_{i}$ and $Y_{i}$ ($i=1,\cdot\cdot\cdot,n$) are homogeneous of degree minus one with respect
%to $\delta_{\tau}$, namely,
%$$X_{i}(\delta_{\tau}) = \tau \delta_{\tau}(X_{i}),\quad Y_{i}(\delta_{\tau}) = \tau \delta_{\tau}(Y_{i}),$$

The horizontal gradient on $\mathbb{H}^n$ of a suitable function $\phi$ is defined as
\begin{equation}\label{f3}
\nabla_{\mathbb{H}^n}\phi=(X_{1}\phi,\ldots, X_{n}\phi,Y_{1}\phi,\ldots, Y_{n}\phi).
\end{equation}

 The sub-Laplacian $\Delta_{\mathbb{H}^n}$ on the Heisenberg group $\mathbb{H}^n$  is defined as
\begin{equation}\label{f4}
\begin{aligned}
\Delta_{\mathbb{H}^n}&:=\sum_{i=1}^{n}(X_{i}^{2}+Y_{i}^{2})\\
&=\sum_{i=1}^{n}\bigg(\frac{\partial^{2}}{\partial x_{i}^{2}}+\frac{\partial^{2}}{\partial y_{i}^{2}}+4y_{i}\frac{\partial^{2}}{\partial x_{i}\partial t}-4x_{i}(\frac{\partial^{2}}{\partial y_{i}\partial t}+4x_{i}^{2}+y_{i}^{2})\frac{\partial^{2}}{\partial t^{2}}\bigg).
\end{aligned}
\end{equation}

The family ${X_{1},\cdot\cdot\cdot,X_{n}, Y_{1},\cdot\cdot\cdot,Y_{n},}$ satisfies the H\"ormander's rank condition (see \cite{hor}), which implies that $\Delta_{\mathbb{H}^n}$ is hypoelliptic. % and the maximum principle is hold for the solutions to the equation
%including $\Delta_{\mathbb{H}^n}$ (see \cite{bony}). Then the vector field $X_{i},Y_{i},(i=1,\cdot\cdot\cdot,n)$ and their first order commutators span the whole Lie Algebra.

The even integer $Q = 2n + 2$ is called the homogeneous dimension of $\mathbb{H}^{n}$. Denote by $|\xi|_{\mathbb{H}^n}$
the (Kaplan)  distance from $\xi$ to the zero (see \cite{foll}):
\begin{equation}\label{f5}
|\xi|_{\mathbb{H}^n}:=\bigg(\sum_{i=1}^{n}(x_{i}^{2}+y_{i}^{2})^{2}+t^{2}\bigg)^{\frac{1}{4}}.
\end{equation}

%In \cite{roncal} used the norm
%$$|(z,w)|=(\sum_{i=1}^{n}(x_{i}^{2}+y_{i}^{2})^{2}+16t^{2})^{\frac{1}{4}},$$
%for $(x,y,t):= (z,w) \in \mathbb{H}^{n}$,
%which is equivalent to (\ref{c5}).

The distance between two points $\xi$ and $\eta$ in $\mathbb{H}^{n}$ is defined by
$$d_{\mathbb{H}^n}(\xi,\eta)=|\eta^{-1}\circ\xi|_{\mathbb{H}^n},$$
where $\eta^{-1}$ denotes the inverse of $\eta$ with respect to $\circ$, namely $\eta^{-1}=-\eta$.

We say that a function $u$ on $\mathbb{H}^n$ is {\it cylindrical } if for any $(x,y,t) \in \mathbb{H}^{n}$, we have $u(x, y, t) = u(r, t)$ with $r:=\sqrt{|x|^2+|y|^2}$.

It is easy to see that if $u$ is cylindrical then
$$\Delta _{\mathbb{H}^n}u(r,t)=\frac{\partial^{2}u}{\partial r^{2}}+\frac{2n-1}{r}\frac{\partial u}{\partial r}+4r^{2}\frac{\partial^{2}u}{\partial t^{2}}.$$

The open ball of radius $R > 0$ centered at $\xi$ is the set
$$  B_{\mathbb{H}^n}(\xi,R):=\{\eta\in  \mathbb{H}^{n}\mid   d_{\mathbb{H}^n}(\xi,\eta)<R\}  .$$
The map $\xi \mapsto |\xi|_{\mathbb{H}^n}$ is homogeneous of degree one with respect to  the dilations $\delta_{\tau}$ and consequently, we have
$$|B_{\mathbb{H}^n}(\xi,R)| = |B_{\mathbb{H}^n}(0,R)| = R^{Q}|B_{\mathbb{H}^n}(0,1)|,$$
where $|\cdot|$ denotes the Haar measure on $\mathbb{H}^n$.

\section{The Proof of Theorem \ref{th1}}

In this section, we outline the proof of Theorem \ref{th1} concerning the Liouville type theorem for system \eqref{a1}.  We begin with some statements required for the proof.  

Let $(u,v)$ be a pair of nonnegative continuous functions defined on $\mathbb{H}^{n}$. We introduce the {\it CR inversion} of $u$ and $v$ centered at the origin, denoted by $\bar{u}$ and $\bar{v}$, respectively as follows:
$$
\bar{u}(\xi)=\frac{1}{|\xi|_{\mathbb{H}^{n}}^{Q-2}}u(\tilde{\xi}),\,\,
\bar{v}(\xi)=\frac{1}{|\xi|_{\mathbb{H}^{n}}^{Q-2}}v(\tilde{\xi}),\,\,\,\ \xi\in\mathbb{H}^{n}\backslash \{0\},
$$
where $\xi=(x,y,t)\in \mathbb{H}^{n}\backslash\{0\}$, $\tilde{\xi}=(\tilde{x},\tilde{y},\tilde{t})\in \mathbb{H}^{n}$, and
$$
\tilde{x}_{i}=\frac{x_{i}t+y_{i}r^{2}}{{|\xi|_{\mathbb{H}}^{4}}}
,\quad \tilde{y}_{i}=\frac{y_{i}t-x_{i}r^{2}}{{|\xi|_{\mathbb{H}}^{4}}},
\quad \tilde{t}=-\frac{t}{{|\xi|_{\mathbb{H}}^{4}}},
$$
where $r^2:=(|x|^2+|y|^2).$ The CR inversion on $\mathbb{H}^n$ was first defined by Jerison and Lee \cite{JL} and serves as a suitable replacement of the {Kelvin transform} on $\mathbb{R}^n.$
Obviously, $\bar{u}$ and $ \bar{v}$ are continuous and nonnegative  on $\mathbb{H}^{n}\backslash \{0\}$. A direct computation yields (see \cite{BP, yujde} for the proof) that:

\begin{lemma} \label{lemm1}
Let $(u,v)\in \left(H_{loc}^{1}(\mathbb{H}^{n})\cap C^{0}(\mathbb{H}^{n})\right)\times\left(H_{loc}^{1}(\mathbb{H}^{n})\cap C^{0}(\mathbb{H}^{n})\right)$ be a positive weak cylindrical solution of system \eqref{a1}. Then $(\bar{u}, \bar{v})$ satisfy the following system:
\begin{equation}\label{w4}
\begin{cases}
\ -\Delta_{\mathbb{H}^{n}}\bar{u}(\xi)=\frac{1}{|\xi|_{\mathbb{H}^{n}}^{Q+2}}f(|\xi|_{\mathbb{H}^{n}}^{Q-2}\bar{v}(\xi)) ,& \xi\in\mathbb{H}^{n}\backslash \{0\},\\
\ -\Delta_{\mathbb{H}^{n}}\bar{v}(\xi)=\frac{1}{|\xi|_{\mathbb{H}^{n}}^{Q+2}}g(|\xi|_{\mathbb{H}^{n}}^{Q-2}\bar{u}(\xi)) ,& \xi\in\mathbb{H}^{n}\backslash \{0\}.
\end{cases}
\end{equation}

Moreover, $(\bar{u}, \bar{v})$  satisfy
\begin{equation}\label{32q}
    \lim_{|\xi|_{\mathbb{H}^{n}}\rightarrow \infty}|\xi|_{\mathbb{H}^{n}}^{Q-2}\bar{u}(\xi)=u(0),\ \lim_{| \xi|_{\mathbb{H}^{n}}\rightarrow \infty}|\xi|_{\mathbb{H}^{n}}^{Q-2}\bar{v}(\xi)=v(0),
\end{equation}
and therefore,  $\bar{u}, \bar{v}\in  L^{\tau+1}(\mathbb{H}^{n}\backslash B_{r}(0))\cap L^{\infty}(\mathbb{H}^{n}\backslash B_{r}(0))$ for any $r>0$, where $\tau=\frac{Q+2}{Q-2}$.
\end{lemma}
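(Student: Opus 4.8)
\textbf{Proof proposal for Lemma \ref{lemm1}.}

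The plan is to verify the three assertions in turn, exploiting the conformal (CR) covariance of the sub-Laplacian under the Jerison--Lee inversion. First I would record the fundamental transformation law: if $w$ is a function on $\mathbb{H}^n\setminus\{0\}$ and $\bar w(\xi)=|\xi|_{\mathbb{H}^n}^{-(Q-2)}w(\tilde\xi)$ is its CR inversion, then
\begin{equation}\label{prop-cov}
\Delta_{\mathbb{H}^n}\bar w(\xi)=|\xi|_{\mathbb{H}^n}^{-(Q+2)}\,(\Delta_{\mathbb{H}^n}w)(\tilde\xi),\qquad \xi\in\mathbb{H}^n\setminus\{0\}.
\end{equation}
This identity is exactly the computation carried out by Jerison--Lee \cite{JL} and reused by Birindelli--Prajapat \cite{BP} and Yu \cite{yujde}; I would cite those references rather than redo the chain rule, since the map $\xi\mapsto\tilde\xi$ is an involutive CR automorphism away from the origin with conformal factor $|\xi|_{\mathbb{H}^n}^{-4}$. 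In the weak formulation this is justified by a change of variables in the integral identities \eqref{de}--\eqref{de1}: one plugs test functions of the form $\varphi(\tilde\xi)|\xi|_{\mathbb{H}^n}^{-(Q-2)}$ and uses that the inversion preserves $H^1_{loc}$ away from the singularity together with the Jacobian identity for the Haar measure, $d(\tilde\xi)=|\xi|_{\mathbb{H}^n}^{-2Q}\,d\xi$. Applying \eqref{prop-cov} with $w=u$ and using $-\Delta_{\mathbb{H}^n}u(\tilde\xi)=f(v(\tilde\xi))$, and then writing $v(\tilde\xi)=|\xi|_{\mathbb{H}^n}^{Q-2}\bar v(\xi)$, gives the first equation of \eqref{w4}; the second follows symmetrically with $w=v$ and $g$.

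Next I would establish the asymptotics \eqref{32q}. The point is that $|\tilde\xi|_{\mathbb{H}^n}=|\xi|_{\mathbb{H}^n}^{-1}$, so as $|\xi|_{\mathbb{H}^n}\to\infty$ we have $\tilde\xi\to 0$, and by continuity $u(\tilde\xi)\to u(0)$. Multiplying the definition $\bar u(\xi)=|\xi|_{\mathbb{H}^n}^{-(Q-2)}u(\tilde\xi)$ by $|\xi|_{\mathbb{H}^n}^{Q-2}$ yields $|\xi|_{\mathbb{H}^n}^{Q-2}\bar u(\xi)=u(\tilde\xi)\to u(0)$, and likewise for $\bar v$. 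I would remark that since $u,v$ are positive and continuous on all of $\mathbb{H}^n$, the limits $u(0),v(0)$ are finite positive numbers.

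Finally, for the integrability and boundedness claims: fix $r>0$. On $\mathbb{H}^n\setminus B_r(0)$ one has $|\xi|_{\mathbb{H}^n}\geq r$, hence $\tilde\xi$ ranges over the bounded set $\overline{B_{1/r}(0)}$ on which $u$ is continuous, therefore bounded, say by $M_r$; then $|\bar u(\xi)|\le M_r\,|\xi|_{\mathbb{H}^n}^{-(Q-2)}\le M_r r^{-(Q-2)}$, giving $\bar u\in L^\infty(\mathbb{H}^n\setminus B_r(0))$, and the same for $\bar v$. For the $L^{\tau+1}$ statement with $\tau=\frac{Q+2}{Q-2}$, note $\tau+1=\frac{2Q}{Q-2}$, so from \eqref{32q} there is $R>r$ with $\bar u(\xi)\le 2u(0)\,|\xi|_{\mathbb{H}^n}^{-(Q-2)}$ for $|\xi|_{\mathbb{H}^n}\ge R$; then
\begin{equation*}
\int_{\mathbb{H}^n\setminus B_R(0)}\bar u^{\tau+1}\,d\xi\le (2u(0))^{\tau+1}\int_{\mathbb{H}^n\setminus B_R(0)}|\xi|_{\mathbb{H}^n}^{-(Q-2)\cdot\frac{2Q}{Q-2}}\,d\xi=(2u(0))^{\tau+1}\int_{\mathbb{H}^n\setminus B_R(0)}|\xi|_{\mathbb{H}^n}^{-2Q}\,d\xi,
\end{equation*}
which converges by the polar-type integration on $\mathbb{H}^n$ (the exponent $2Q$ exceeds the homogeneous dimension $Q$); on the bounded annulus $B_R(0)\setminus B_r(0)$ the $L^\infty$ bound already gives finiteness. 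Combining the two regions yields $\bar u\in L^{\tau+1}(\mathbb{H}^n\setminus B_r(0))$, and symmetrically for $\bar v$.

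The main obstacle I anticipate is the rigorous weak-formulation justification of \eqref{prop-cov}: one must check that the inversion maps $C^1_c$ test functions supported away from $0$ to admissible test functions and that the identity $\nabla_{\mathbb{H}^n}\bar u\cdot\nabla_{\mathbb{H}^n}\varphi$ transforms correctly under the change of variables, which requires knowing how the horizontal gradient behaves under the CR inversion (not merely the sub-Laplacian). This is where I would lean most heavily on the explicit computations in \cite{JL, BP, yujde}, quoting the conformality of the inversion for the horizontal layer, rather than reproving it; everything else is elementary continuity and integration estimates.
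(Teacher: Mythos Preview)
Your proposal is correct and follows exactly the route the paper intends: the paper itself does not give a proof of this lemma but simply states ``A direct computation yields (see \cite{BP, yujde} for the proof)'', so your plan of invoking the Jerison--Lee conformal covariance identity \eqref{prop-cov} from \cite{JL, BP, yujde}, then reading off \eqref{w4} and deducing \eqref{32q} and the integrability from the relation $|\tilde\xi|_{\mathbb{H}^n}=|\xi|_{\mathbb{H}^n}^{-1}$ together with continuity of $u,v$ at the origin, is precisely what those references do. Your write-up in fact supplies more detail than the paper, and your caveat about the weak-formulation version of \eqref{prop-cov} is well placed---that is exactly the step for which the paper defers to \cite{BP, yujde}.
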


Define
\begin{equation}\label{cc}
h(t):=\frac{f(t)}{t^{\frac{Q+2}{Q-2}}},\quad k(t):=\frac{g(t)}{t^{\frac{Q+2}{Q-2}}},
\end{equation}
and substitute it in \eqref{w4} to obtain the following system:
\begin{equation}\label{cc4}
\begin{cases}
\ -\Delta_{\mathbb{H}^{n}}\bar{u}(\xi)=h(|\xi|_{\mathbb{H}^{n}}^{Q-2}\bar{v}(\xi))\bar{v}^{\frac{Q+2}{Q-2}}(\xi) ,& \xi\in\mathbb{H}^{n}\backslash \{0\},\\
\ -\Delta_{\mathbb{H}^{n}}\bar{v}(\xi)=k(|\xi|_{\mathbb{H}^{n}}^{Q-2}\bar{u}(\xi))\bar{u}^{\frac{Q+2}{Q-2}}(\xi) ,& \xi\in\mathbb{H}^{n}\backslash \{0\}.
\end{cases}
\end{equation}

Now, we are in a position to apply the moving planes method. Before that let us recall the necessary notation. 

We define the set
$$\Sigma_{\lambda}:=\{\xi:=(x, y,t )\in\mathbb{H}^{n}\mid t>\lambda\},$$
and the plane
$$T_{\lambda}:=\{\xi=(x, y,t )\in\mathbb{H}^{n}\mid t=\lambda\}.$$
For $\xi=(x,y,t)\in\mathbb{H}^{n}$, the $H$-reflection of $\xi$ with respect to the plane $T_{\lambda}$ is defined by
$$\xi_{\lambda}:=(y,x,2\lambda-t).$$

It is well-known that $-\Delta_{\mathbb{H}^n}$ is invariant under the action of $H$-reflection. This means that, if $-\Delta_{\mathbb{H}^n}u(\xi)=f(\xi)$ then $-\Delta_{\mathbb{H}^n}u(\xi_\lambda)=f(\xi_\lambda).$ 

We define the $H$-reflection $\bar{u}_{\lambda}$ of a cylindrical function $\bar{u}$ with respect to $T_{\lambda}$ by
$$\bar{u}_{\lambda}(x,y,t)=\bar{u}_{\lambda}(r,t):=\bar{u}(r,2\lambda-t)=\bar{u}(y,x,2\lambda-t).$$

Similarly, we have
$$\bar{v}_{\lambda}(x,y,t)=\bar{v}_{\lambda}(r,t):=\bar{v}(r,2\lambda-t)=\bar{v}(y,x,2\lambda-t).$$
Let $0_{\lambda}=(0,0,2\lambda)$,
then it follows from \eqref{cc4} and  the invariance
with respect to the $H$-reflection that $(\bar{u}_{\lambda},\bar{v}_{\lambda})$ satisfies
\begin{equation}\label{c5}
\begin{cases}
\ -\Delta_{\mathbb{H}^{n}}\bar{u}_{\lambda}(\xi)=h(|\xi_{\lambda}|_{\mathbb{H}^{n}}^{Q-2}\bar{v}_{\lambda}(\xi))\bar{v}_{\lambda}^{\frac{Q+2}{Q-2}}(\xi) ,& \xi\in\mathbb{H}^{n}\backslash \{0_{\lambda}\},\\
\ -\Delta_{\mathbb{H}^{n}}\bar{v}_{\lambda}(\xi)=k(|\xi_{\lambda}|_{\mathbb{H}^{n}}^{Q-2}\bar{u}_{\lambda}(\xi))\bar{u}_{\lambda}^{\frac{Q+2}{Q-2}}(\xi) ,& \xi\in\mathbb{H}^{n}\backslash \{0_{\lambda}\}.
\end{cases}
\end{equation}

In order to compare the value of $\bar{u}_{\lambda}$ and $\bar{v}_{\lambda}$ with $\bar{u}$ and $\bar{v}$, respectively,  we define
$$U_{\lambda}(\xi):=\bar{u}(\xi)-\bar{u}_{\lambda}(\xi),\quad V_{\lambda}(\xi):=\bar{v}(\xi)-\bar{v}_{\lambda}(\xi).$$
It is evident from the definition of $U_{\lambda}$ and $V_{\lambda}$ and \eqref{32q} that
\begin{equation}\label{c6}
\lim_{|\xi|_{\mathbb{H}^{n}}\rightarrow \infty} U_{\lambda}(\xi)=0,\ \lim_{|\xi|_{\mathbb{H}^{n}}\rightarrow \infty} V_{\lambda}(\xi)=0.
\end{equation}

In order to use the method of moving planes, the first step is to show that we can start the process. Namely, we first prove the following lemma.

\begin{lemma} \label{lemm2}
For any fixed $\lambda>0$, we have $\bar{u},\bar{v}\in L^{\tau+1}(\Sigma_{\lambda})\cap L^{\infty}(\Sigma_{\lambda})$, $U_{\lambda}^{+}(\xi),V_{\lambda}^{+}(\xi)\in L^{\tau+1}(\Sigma_{\lambda})\cap L^{\infty}(\Sigma_{\lambda})\cap H^{1}(\Sigma_{\lambda})$ with $\tau=\frac{Q+2}{Q-2}$, where $U_{\lambda}^{+}=\max(U_{\lambda},0)$, $V_{\lambda}^{+}=\max(V_{\lambda},0)$. Moreover, there exist $C_{\lambda}>0$ nonincreasing in $\lambda$, such that
\begin{equation}\label{w5}
\bigg(\int_{\Sigma_{\lambda}}(U_{\lambda}^{+})^{\frac{2Q}{Q-2}}d\xi\bigg)^{\frac{Q-2}{2Q}}\leq C_{\lambda}\bigg(\int_{\Sigma_{\lambda}^{v}}\frac{1}{|\xi|_{\mathbb{H}^{n}}^{2Q}}d\xi\bigg)^{\frac{2}{Q}}
\bigg(\int_{\Sigma_{\lambda}}(V_{\lambda}^{+})^{\frac{2Q}{Q-2}}d\xi\bigg)^{\frac{Q-2}{2Q}},
\end{equation}
\begin{equation}\label{w6}
\bigg(\int_{\Sigma_{\lambda}}(V_{\lambda}^{+})^{\frac{2Q}{Q-2}}d\xi\bigg)^{\frac{Q-2}{2Q}}\leq C_{\lambda}\bigg(\int_{\Sigma_{\lambda}^{u}}\frac{1}{|\xi|_{\mathbb{H}^{n}}^{2Q}}d\xi\bigg)^{\frac{2}{Q}}
\bigg(\int_{\Sigma_{\lambda}}(U_{\lambda}^{+})^{\frac{2Q}{Q-2}}d\xi\bigg)^{\frac{Q-2}{2Q}},
\end{equation}
where $\Sigma_{\lambda}^{u}=\{\xi\in\Sigma_{\lambda}\backslash \{0_{\lambda}\}\mid h(|\xi|_{\mathbb{H}^{n}}^{Q-2}\bar{u})>0,  U_{\lambda}(\xi)>0\}$ and  $\Sigma_{\lambda}^{v}=\{\xi\in\Sigma_{\lambda}\backslash \{0_{\lambda}\}\mid h(|\xi|_{\mathbb{H}^{n}}^{Q-2}\bar{v})>0,  V_{\lambda}(\xi)>0\}$. 
\end{lemma}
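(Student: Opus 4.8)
The strategy is the standard opening of the moving-planes machinery in integral form, adapted to the coupled system \eqref{c5}. First I would record the integrability of $\bar u,\bar v$ on $\Sigma_\lambda$. Since $\Sigma_\lambda\subset\mathbb{H}^n\setminus B_\lambda(0)$ for $\lambda>0$, the assertion $\bar u,\bar v\in L^{\tau+1}(\Sigma_\lambda)\cap L^\infty(\Sigma_\lambda)$ is immediate from the last statement of Lemma \ref{lemm1}. The same containment gives $\bar u_\lambda,\bar v_\lambda\in L^{\tau+1}(\Sigma_\lambda)\cap L^\infty(\Sigma_\lambda)$, because the $H$-reflection maps $\Sigma_\lambda$ onto $\Sigma_\lambda$ and $|\xi_\lambda|_{\mathbb{H}^n}$ stays bounded away from $0$ on $\Sigma_\lambda$ for $\lambda>0$ (indeed $|\xi_\lambda|_{\mathbb{H}^n}\to\infty$ as $|\xi|_{\mathbb{H}^n}\to\infty$ by \eqref{32q}). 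Hence $U_\lambda^+,V_\lambda^+\in L^{\tau+1}(\Sigma_\lambda)\cap L^\infty(\Sigma_\lambda)$; the $H^1(\Sigma_\lambda)$ membership follows from $\bar u,\bar v\in H^1_{loc}$ together with the decay \eqref{c6} and a Caccioppoli/energy estimate using the equations \eqref{cc4}, \eqref{c5} — this is where one truncates $U_\lambda^+$ against a cutoff and lets the cutoff expand to all of $\Sigma_\lambda$, controlling the tail by the $L^{\tau+1}$ bound.

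The core of the lemma is the pair of integral inequalities \eqref{w5}–\eqref{w6}. I would derive \eqref{w5} (and \eqref{w6} symmetrically) by testing the difference of the first equations in \eqref{cc4} and \eqref{c5} against $U_\lambda^+$. Since $U_\lambda^+$ vanishes on $T_\lambda$ and decays at infinity, it is an admissible test function on $\Sigma_\lambda$, and integration by parts yields
\begin{equation*}
\int_{\Sigma_\lambda}|\nabla_{\mathbb{H}^n}U_\lambda^+|^2
=\int_{\Sigma_\lambda}\Big(h(|\xi|_{\mathbb{H}^n}^{Q-2}\bar v)\,\bar v^{\tau}-h(|\xi_\lambda|_{\mathbb{H}^n}^{Q-2}\bar v_\lambda)\,\bar v_\lambda^{\tau}\Big)U_\lambda^+.
\end{equation*}
Now I split the integrand. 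On the right-hand side one writes the difference as $h(|\xi|_{\mathbb{H}^n}^{Q-2}\bar v)(\bar v^\tau-\bar v_\lambda^\tau)$ plus $\bar v_\lambda^\tau\big(h(|\xi|_{\mathbb{H}^n}^{Q-2}\bar v)-h(|\xi_\lambda|_{\mathbb{H}^n}^{Q-2}\bar v_\lambda)\big)$. For $\lambda>0$ and $\xi\in\Sigma_\lambda$ one has $|\xi|_{\mathbb{H}^n}\ge|\xi_\lambda|_{\mathbb{H}^n}$, so by monotonicity of $h$ (hypothesis (ii) of Theorem \ref{th1}, with $h$ nonincreasing) the second term is controlled: on the set where $V_\lambda>0$ one gets $h(|\xi|_{\mathbb{H}^n}^{Q-2}\bar v)\le h(|\xi_\lambda|_{\mathbb{H}^n}^{Q-2}\bar v_\lambda)$ only after comparing arguments carefully, so the relevant contribution is nonpositive outside $\Sigma_\lambda^v$ and on $\Sigma_\lambda^v$ is bounded using $h(|\xi|_{\mathbb{H}^n}^{Q-2}\bar v)\le h(0^+)\cdot$(something) — more precisely one uses that $h(|\xi|_{\mathbb{H}^n}^{Q-2}\bar v)\le \frac{f(|\xi|_{\mathbb{H}^n}^{Q-2}\bar v)}{(|\xi|_{\mathbb{H}^n}^{Q-2}\bar v)^\tau}$, and since $f$ is nondecreasing and the argument is bounded (by \eqref{32q}), $h(|\xi|_{\mathbb{H}^n}^{Q-2}\bar v)\le C|\xi|_{\mathbb{H}^n}^{-(Q+2)}\bar v^{-\tau}f(\text{bounded})$, yielding a factor $|\xi|_{\mathbb{H}^n}^{-(Q+2)}\bar v^{-\tau}$ that combines with $\bar v_\lambda^\tau\le \bar v^\tau$ to leave an integrable $|\xi|_{\mathbb{H}^n}^{-(Q+2)}$ weight — this is precisely the origin of the $|\xi|_{\mathbb{H}^n}^{-2Q}$ factor after applying Hölder. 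The first term, $h(|\xi|_{\mathbb{H}^n}^{Q-2}\bar v)(\bar v^\tau-\bar v_\lambda^\tau)U_\lambda^+$, is handled by the elementary inequality $\bar v^\tau-\bar v_\lambda^\tau\le \tau\bar v^{\tau-1}V_\lambda^+$ (valid since $\tau>1$, $\bar v\ge\bar v_\lambda$ on the relevant set), again using $h(|\xi|_{\mathbb{H}^n}^{Q-2}\bar v)\bar v^{\tau-1}\le C|\xi|_{\mathbb{H}^n}^{-(Q+2)}\bar v^{-1}f(\text{bounded})$ and the decay $\bar v^{-1}\le C|\xi|_{\mathbb{H}^n}^{Q-2}$ from \eqref{32q}, producing the weight $|\xi|_{\mathbb{H}^n}^{-4}$... at which point one sees that the clean way is to keep the leading term as $C_\lambda\int |\xi|_{\mathbb{H}^n}^{-4}V_\lambda^+U_\lambda^+$ or similar and then apply Hölder with exponents $\tfrac{2Q}{Q-2},\tfrac{2Q}{Q-2},\tfrac{Q}{2}$ so that $V_\lambda^+$ and $U_\lambda^+$ contribute their $L^{2Q/(Q-2)}$ norms and the weight contributes $\big(\int_{\Sigma_\lambda^v}|\xi|_{\mathbb{H}^n}^{-2Q}\big)^{2/Q}$. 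Finally, the left-hand side $\int_{\Sigma_\lambda}|\nabla_{\mathbb{H}^n}U_\lambda^+|^2$ is bounded below by $c\big(\int_{\Sigma_\lambda}(U_\lambda^+)^{2Q/(Q-2)}\big)^{(Q-2)/Q}$ via the Folland–Stein (subelliptic Sobolev) inequality on $\mathbb{H}^n$ with critical exponent $\tfrac{2Q}{Q-2}$; combining gives \eqref{w5}. The monotone dependence of $C_\lambda$ on $\lambda$ is read off from the fact that shrinking $\Sigma_\lambda$ as $\lambda$ increases only decreases the weighted integral and that all constants depend on $\lambda$ only through $\sup_{\Sigma_\lambda}$ of bounded quantities and through the lower bound $|\xi|_{\mathbb{H}^n}\ge\lambda$.

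\textbf{Main obstacle.} The delicate point is the bookkeeping of the nonlinear term, that is, showing that the combination of the monotonicity hypotheses (i)–(ii) on $f$ (equivalently on $h$) together with the asymptotics \eqref{32q} of the CR-inversions genuinely produces the integrable homogeneous weight $|\xi|_{\mathbb{H}^n}^{-2Q}$ restricted to $\Sigma_\lambda^v$, rather than merely a bounded weight. One must exploit that $h$ nonincreasing forces the ``bad'' part of the difference $h(|\xi|_{\mathbb{H}^n}^{Q-2}\bar v)\bar v^\tau - h(|\xi_\lambda|_{\mathbb{H}^n}^{Q-2}\bar v_\lambda)\bar v_\lambda^\tau$ to live only where $V_\lambda>0$ \emph{and} $h(|\xi|_{\mathbb{H}^n}^{Q-2}\bar v)>0$ (hence the definition of $\Sigma_\lambda^v$), and that on this set the mean-value estimate for $s\mapsto h(s)s^{\tau/(Q-2)}\cdot(\dots)$... in short, one needs the precise inequality $h(a)a^\tau-h(b)b^\tau\le \tau h(b) b^{\tau-1}(a-b)$ for $0\le b\le a$ when $h$ is nonincreasing (which is exactly the convexity-type gain that replaces the maximum principle), and then one feeds in the two-sided bound $c|\xi|_{\mathbb{H}^n}^{-(Q-2)}\le \bar v(\xi)\le C|\xi|_{\mathbb{H}^n}^{-(Q-2)}$ near infinity from \eqref{32q} to convert every surviving power of $\bar v$ and every evaluation $f(|\xi|_{\mathbb{H}^n}^{Q-2}\bar v)=f(\text{bounded})\le C$ into the correct power of $|\xi|_{\mathbb{H}^n}$. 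Getting this chain of estimates to land exactly on the exponent $-2Q$ — no more, no less — is the crux; everything else (admissibility of the test function, integration by parts, Folland–Stein, Hölder, monotonicity of $C_\lambda$) is routine once this weight is pinned down. I would carry out \eqref{w5} in full detail and then invoke symmetry (swapping $(u,f,h,U_\lambda)\leftrightarrow(v,g,k,V_\lambda)$) for \eqref{w6}.
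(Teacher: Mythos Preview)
Your overall strategy matches the paper's --- test the difference of the equations against $U_\lambda^+$ (with a cutoff $\eta_\epsilon$ to handle the singularity at $0_\lambda$ and infinity), use the monotonicity hypotheses to restrict the nonlinear term to $\Sigma_\lambda^v$, apply the mean-value theorem, then H\"older with exponents $\tfrac{2Q}{Q-2},\tfrac{2Q}{Q-2},\tfrac{Q}{2}$ and the Folland--Stein Sobolev inequality. Two points, however, need repair.

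First, your add--subtract decomposition of the nonlinear difference is more convoluted than necessary and your justification for why the ``second term'' is harmless is incomplete (when $V_\lambda<0$ you cannot conclude $h(|\xi|^{Q-2}\bar v)\le h(|\xi_\lambda|^{Q-2}\bar v_\lambda)$ from monotonicity of $h$ alone, since the two arguments move in opposite directions). The paper avoids this entirely by a direct case split on the sign of $V_\lambda$: if $\bar v\le \bar v_\lambda$, then since $f$ is nondecreasing and $h$ nonincreasing,
\[
h(|\xi|^{Q-2}\bar v)\bar v^\tau=\frac{f(|\xi|^{Q-2}\bar v)}{|\xi|^{Q+2}}\le \frac{f(|\xi|^{Q-2}\bar v_\lambda)}{|\xi|^{Q+2}}=h(|\xi|^{Q-2}\bar v_\lambda)\bar v_\lambda^\tau\le h(|\xi_\lambda|^{Q-2}\bar v_\lambda)\bar v_\lambda^\tau,
\]
so the whole integrand is $\le 0$ there; if $\bar v>\bar v_\lambda$, then both factors in the argument of $h$ increase, so $h(|\xi|^{Q-2}\bar v)\le h(|\xi_\lambda|^{Q-2}\bar v_\lambda)$ and the difference is bounded by $h^+(|\xi|^{Q-2}\bar v)(\bar v^\tau-\bar v_\lambda^\tau)$. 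This immediately localizes to $\Sigma_\lambda^v$.

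Second, the ``precise inequality'' you single out, $h(a)a^\tau-h(b)b^\tau\le \tau h(b)b^{\tau-1}(a-b)$ for $0\le b\le a$, is false: take $h\equiv 1$ and use convexity of $t\mapsto t^\tau$ ($\tau>1$) to see the inequality reverses. The correct mean-value bound is $\bar v^\tau-\bar v_\lambda^\tau\le \tau\,\bar v^{\,\tau-1}V_\lambda^+$ with the \emph{larger} base, and then $\bar v^{\,\tau-1}=\bar v^{4/(Q-2)}\le C_\lambda|\xi|^{-4}$ from \eqref{32q}. Together with $0\le h^+(|\xi|^{Q-2}\bar v)\le h^+\bigl(\inf_{\Sigma_\lambda}|\xi|^{Q-2}\bar v\bigr)=:C_\lambda$ this produces the weight $|\xi|^{-4}$ in the integrand, which after H\"older becomes $\bigl(\int_{\Sigma_\lambda^v}|\xi|^{-2Q}\bigr)^{2/Q}$. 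The monotonicity of $C_\lambda$ in $\lambda$ comes specifically from this formula: as $\lambda$ increases, $\Sigma_\lambda$ shrinks, the infimum increases, and $h^+$ of it decreases.
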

\begin{proof} 
We only prove \eqref{w5}, the proof of \eqref{w6} is similar. For any fixed $\lambda>0$, there exists $r>0$ such that $\Sigma_{\lambda}\subset \mathbb{H}^{n}\backslash B_{r}(0)$, then $\bar{u}$ and $U_{\lambda}^{+}\leq\bar{u}\in L^{\tau+1}(\Sigma_{\lambda})\cap L^{\infty}(\Sigma_{\lambda})$ (see Lemma \ref{lemm1}), and $\frac{1}{|\xi|_{\mathbb{H}^{n}}^{2Q}}$ is integrable in $\Sigma_{\lambda}$.

We choose a cylindrical symmetric cut-off function $0\leq\eta_{\epsilon}\leq1$ on $\mathbb{H}^n$ such that
$$
\eta_{\epsilon}(\xi)=
\begin{cases}
1, & 2\epsilon\leq|0_{\lambda}^{-1} \circ \xi|_{\mathbb{H}^{n}}\leq\frac{1}{\epsilon},\\
0 ,& |0_{\lambda}^{-1} \circ \xi|_{\mathbb{H}^{n}}\leq\epsilon\ or\ |0_{\lambda}^{-1} \circ \xi|_{\mathbb{H}^{n}}\geq\frac{2}{\epsilon},
\end{cases}
$$
and
$$
|\nabla_{\mathbb{H}^{n}}\eta_{\epsilon}|\leq
\begin{cases}
\frac{2}{\epsilon},&  \epsilon\leq|0_{\lambda}^{-1} \circ \xi|_{\mathbb{H}^{n}}\leq2\epsilon,\\
 2\epsilon ,& \frac{1}{\epsilon}\leq|0_{\lambda}^{-1} \circ \xi|_{\mathbb{H}^{n}}\leq\frac{2}{\epsilon}.
\end{cases}
$$
Now, we choose $\phi_{\epsilon}=\eta_{\epsilon}^{2}U_{\lambda}^{+}$ as a test function, then it follows from \eqref{cc4} and \eqref{c5} using the simple calculation $\nabla_{\mathbb{H}^n} \phi_\epsilon=2 (\nabla_{\mathbb{H}^n} \eta_\epsilon) \eta_\epsilon U_{\lambda}^+ +\eta_{\xi}^2 \nabla_{\mathbb{H}^n} U_{\lambda}^+ $ that
\begin{equation}\label{w7}
\begin{aligned}
\int_{\Sigma_{\lambda} \cap \left[ 2 \epsilon \leq |0_\lambda^{-1} \circ \xi |_{\mathbb{H}^n} \leq \frac{1}{\epsilon} \right]}&|\nabla_{\mathbb{H}^{n}}(U_{\lambda}^{+})|^{2}d\xi \leq 
\int_{\Sigma_{\lambda}}|\nabla_{\mathbb{H}^{n}}(U_{\lambda}^{+}\eta_{\epsilon})|^{2}d\xi \\&= \int_{\Sigma_{\lambda}} 2 \eta_\epsilon U_{\lambda}^+ \nabla_{\mathbb{H}^{n}} U_{\lambda}^{+}  \nabla_{\mathbb{H}^n} \eta_\epsilon d\xi  + \int_{\Sigma_{\lambda}} \eta_{\xi}^2 |\nabla_{\mathbb{H}^n} U_{\lambda}^+|^2 d\xi\\&\quad \quad\quad+\int_{\Sigma_{\lambda}}(U_{\lambda}^{+})^{2}|\nabla_{\mathbb{H}^{n}}\eta_{\epsilon}|^{2}d\xi \\
&=\int_{\Sigma_{\lambda}}   (\nabla_{\mathbb{H}^{n}} U_{\lambda} ) 2\eta_\epsilon U_{\xi}^+ \nabla_{\mathbb{H}^n} \eta_\epsilon  d\xi \\& \quad\quad\quad+ \int_{\Sigma_{\lambda}} \eta_{\xi}^2 |\nabla_{\mathbb{H}^n} U_{\xi}^+|^2 d\xi+\int_{\Sigma_{\lambda}}(U_{\lambda}^{+})^{2}|\nabla_{\mathbb{H}^{n}}\eta_{\epsilon}|^{2}d\xi \\
&=
\int_{\Sigma_{\lambda}}\nabla_{\mathbb{H}^{n}}U_{\lambda}\nabla_{\mathbb{H}^{n}}\phi_{\epsilon} d\xi+\int_{\Sigma_{\lambda}}(U_{\lambda}^{+})^{2}|\nabla_{\mathbb{H}^{n}}\eta_{\epsilon}|^{2}d\xi\\
&=\int_{\Sigma_{\lambda}}-\Delta_{\mathbb{H}^{n}}U_{\lambda}\phi_{\epsilon} d\xi+\int_{\Sigma_{\lambda}}(U_{\lambda}^{+})^{2}|\nabla_{\mathbb{H}^{n}}\eta_{\epsilon}|^{2}d\xi\\
&=\int_{\Sigma_{\lambda}}\bigg(h(|\xi|_{\mathbb{H}^{n}}^{Q-2}\bar{v})\bar{v}^{\frac{Q+2}{Q-2}}
-h(|\xi_{\lambda}|_{\mathbb{H}^{n}}^{Q-2}\bar{v}_{\lambda})\bar{v}_{\lambda}^{\frac{Q+2}{Q-2}}\bigg)\phi_{\epsilon} d\xi+I_{\epsilon},
\end{aligned}
\end{equation}
where $I_{\epsilon}=\int_{\Sigma_{\lambda}}(U_{\lambda}^{+})^{2}|\nabla_{\mathbb{H}^{n}}\eta_{\epsilon}|^{2}d\xi$.

Since $h$ is a nonincreasing function, $|\xi|_{\mathbb{H}^{n}}\geq|\xi_{\lambda}|_{\mathbb{H}^{n}}$ for $\xi\in\Sigma_{\lambda}$ with $\lambda>0$ we conclude that, for $\bar{v}(\xi)\geq\bar{v}_{\lambda}(\xi)\geq0$, we have
\begin{equation}\label{wv8}
    h(|\xi|_{\mathbb{H}^{n}}^{Q-2}\bar{v})\leq h(|\xi_{\lambda}|_{\mathbb{H}^{n}}^{Q-2}\bar{v}_{\lambda}).
\end{equation}
If $0 \leq \bar{v}(\xi)\leq\bar{v}_{\lambda}(\xi)$, and since $f$ is nondecreasing and $h$ is nononcreasing, we get
\begin{equation}\label{w8}
\begin{aligned}
h(|\xi|_{\mathbb{H}^{n}}^{Q-2}\bar{v})\bar{v}^{\frac{Q+2}{Q-2}}&=\frac{f(|\xi|_{\mathbb{H}^{n}}^{Q-2}\bar{v}(\xi))}{|\xi|_{\mathbb{H}^{n}}^{Q+2}}\leq\frac{f(|\xi|_{\mathbb{H}^{n}}^{Q-2}\bar{v}_{\lambda}(\xi))}{|\xi|_{\mathbb{H}^{n}}^{Q+2}}\\
&=h(|\xi|_{\mathbb{H}^{n}}^{Q-2}\bar{v}_{\lambda})\bar{v}_{\lambda}^{\frac{Q+2}{Q-2}}\leq h(|\xi_{\lambda}|_{\mathbb{H}^{n}}^{Q-2}\bar{v}_{\lambda})\bar{v}_{\lambda}^{\frac{Q+2}{Q-2}}.
\end{aligned}
\end{equation}
%Since $h$ is a nonincreasing function, $|\xi|_{\mathbb{H}^{n}}\geq |\xi_{\lambda}|_{\mathbb{H}^{n}}$  and  $\bar{v}(\xi)\geq\bar{v}_{\lambda}(\xi)$ for $\xi\in\Sigma_{\lambda}$ such that $\phi_\epsilon(\xi)>0$, we have
%\begin{equation}\label{wv8}
 %   h(|\xi|_{\mathbb{H}^{n}}^{Q-2}\bar{v})\leq h(|\xi_{\lambda}|_{\mathbb{H}^{n}}^{Q-2}\bar{v}_{\lambda}).
%\end{equation}

Therefore, using \eqref{wv8} and \eqref{w8}  in \eqref{w7} we obtain
\begin{equation}\label{w9}
\begin{aligned}
\int_{\Sigma_{\lambda} \cap \left[ 2 \epsilon \leq |0_\lambda^{-1} \circ \xi |_{\mathbb{H}^n} \leq \frac{1}{\epsilon} \right]}|\nabla_{\mathbb{H}^{n}}(U_{\lambda}^{+})|^{2}d\xi&\leq\int_{\Sigma_{\lambda}}h(|\xi|_{\mathbb{H}^{n}}^{Q-2}\bar{v})(\bar{v}^{\frac{Q+2}{Q-2}}-\bar{v}_{\lambda}^{\frac{Q+2}{Q-2}} )\phi_{\epsilon} d\xi+I_{\epsilon}\\
&=\int_{\Sigma_{\lambda}^{v}}h^+(|\xi|_{\mathbb{H}^{n}}^{Q-2}\bar{v})(\bar{v}^{\frac{Q+2}{Q-2}}-\bar{v}_{\lambda}^{\frac{Q+2}{Q-2}} ) \phi_{\epsilon} d\xi+I_{\epsilon}.
\end{aligned}
\end{equation}

Since $v$ is positive and locally bounded, there are constants $0<C_{\lambda}^{'} \leq  C_{\lambda}^{''}$ such that
\begin{equation}\label{w10}
0<C_{\lambda}^{'}:= \inf_{\xi \in \Sigma_{\lambda}}|\xi|_{\mathbb{H}^{n}}^{Q-2}\bar{v}(\xi) \leq |\xi|_{\mathbb{H}^{n}}^{Q-2}\bar{v}(\xi) \leq C_{\lambda}^{''},\quad \forall\ \xi\in \Sigma_{\lambda},
\end{equation}
and consequently, we have 
\begin{equation}\label{w11}
0\leq h^+(|\xi|_{\mathbb{H}^{n}}^{Q-2}\bar{v})\leq h^+(C_{\lambda}^{'}):=C_{\lambda},\quad \forall\ \xi\in \Sigma_{\lambda}.
\end{equation}
This shows that $C_\lambda$ is nonincreasing in $\lambda$ since $h$ is a nonincreasing function and for $\lambda_1 \leq \lambda_2,$ we can easily deduce from \eqref{w10} that $C_{\lambda_1}' \leq C_{\lambda_2}'$. Indeed,  it follows from \eqref{w10} that $C_{\lambda_1}^{'} \leq  \inf_{\xi \in \Sigma_{\lambda_1}}|\xi|_{\mathbb{H}^{n}}^{Q-2}\bar{v}(\xi)$ and the condition $\lambda_1\leq  \lambda_2$ then yields that $C_{\lambda_1}^{'} \leq  \inf_{\xi \in \Sigma_{\lambda_1}}|\xi|_{\mathbb{H}^{n}}^{Q-2}\bar{v}(\xi) \leq \inf_{\xi \in \Sigma_{\lambda_2}}|\xi|_{\mathbb{H}^{n}}^{Q-2}\bar{v}(\xi)=C_2'$ as $\Sigma_{\lambda_1} \subseteq \Sigma_{\lambda_2}.$
Moreover, for $0\leq\bar{v}_{\lambda}\leq\bar{v}$ as $\bar{v} \in L^\infty(\Sigma_{\lambda})$ for $\lambda>0$, we have using mean value theorem that
\begin{equation}\label{w12}
(\bar{v}^{\frac{Q+2}{Q-2}}-\bar{v}_{\lambda}^{\frac{Q+2}{Q-2}} )\leq \eta^{\frac{Q+2}{Q-2}-1}\frac{Q+2}{Q-2} V_{\lambda}^{+}\leq\bar{v}^{\frac{4}{Q-2}}\frac{Q+2}{Q-2} V_{\lambda}^{+}\leq
\frac{C_{\lambda}}{|\xi|_{\mathbb{H}^{n}}^{4}}V_{\lambda}^{+},
\end{equation}
where $\eta$ lies between $(\bar{v}_\lambda, \bar{v})$ 
 and  the last inequality follows from  the fact 
 that $\bar{v}$ decays at infinity as $\frac{1}{|\xi|_{\mathbb{H}^n}^{Q-2}}.$ 
Here and in the following of the paper, we always use the same $C_\lambda$ to stand for
different constants.

Combining the H\"older inequality with \eqref{w11} and \eqref{w12}, we obtain, by setting $\tau=\frac{Q+2}{Q-2},$
\begin{equation}\label{w13}
\begin{aligned}
\int_{\Sigma_{\lambda} \cap \left[ 2 \epsilon \leq |0_\lambda^{-1} \circ \xi |_{\mathbb{H}^n} \leq \frac{1}{\epsilon} \right]} &|\nabla_{\mathbb{H}^{n}}U_{\lambda}^{+}|^{2}d\xi\leq
\int_{\Sigma_{\lambda}^{v}}h^+(|\xi|_{\mathbb{H}^{n}}^{Q-2}\bar{v})(\bar{v}^{\frac{Q+2}{Q-2}}-\bar{v}_{\lambda}^{\frac{Q+2}{Q-2}} )\phi_{\epsilon} d\xi+I_{\epsilon}\\
&\leq C_{\lambda}\int_{\Sigma_{\lambda}^{v}}\frac{1}{|\xi|_{\mathbb{H}^{n}}^{4}}\eta_{\epsilon}^{2}U_{\lambda}^{+}V_{\lambda}^{+} d\xi+I_{\epsilon}\\
&\leq C_{\lambda}\left(\int_{\Sigma_{\lambda}^{v}}\frac{1}{|\xi|_{\mathbb{H}^{n}}^{2Q}} d\xi \right)^{\frac{2}{Q}}\left(\int_{\Sigma_{\lambda}}(V_{\lambda}^{+}\eta_{\epsilon})^{1+\tau}\right)^{\frac{1}{1+\tau}}\left(\int_{\Sigma_{\lambda}}(U_{\lambda}^{+}\eta_{\epsilon})^{1+\tau}\right)^{\frac{1}{1+\tau}}+I_{\epsilon}.
\end{aligned}
\end{equation}

Now, we claim that $I_{\epsilon}\rightarrow0$ as $\epsilon\rightarrow0$. To show this we define the following set 
$$D_{\epsilon}=\bigg\{\xi\in \Sigma_{\lambda} : \epsilon\leq|0_{\lambda}^{-1} \circ \xi|_{\mathbb{H}^{n}}\leq2\epsilon\ or\ \frac{1}{\epsilon}\leq|0_{\lambda}^{-1} \circ \xi|_{\mathbb{H}^{n}}\leq\frac{2}{\epsilon}\bigg\}.$$
Then, it is clear from the definition of $\eta_\epsilon$ that 
$$\int_{D_{\epsilon}}|\nabla_{\mathbb{H}^{n}}\eta_{\epsilon}|^{Q}d\xi\leq C.$$

Thus, a simple use of H\"older's inequality yields
\begin{equation}\label{w14}
\begin{aligned}
I_{\epsilon}&=\int_{\Sigma_{\lambda}^{u}}(U_{\lambda}^{+})^{2}|\nabla_{\mathbb{H}^{n}}\eta_{\epsilon}|^{2}d\xi\leq\bigg(\int_{D_{\epsilon}}(U_{\lambda}^{+})^{\tau+1}d\xi\bigg)^{\frac{2}{\tau+1}}
\bigg(\int_{D_{\epsilon}}|\nabla_{\mathbb{H}^{n}}\eta_{\epsilon}|^{Q}d\xi\bigg)^{\frac{2}{Q}}\\
&\leq C\bigg(\int_{D_{\epsilon}}(U_{\lambda}^{+})^{\tau+1}d\xi\bigg)^{\frac{2}{\tau+1}} \rightarrow0,\,\,\text{as}\,\, \epsilon\rightarrow0,
\end{aligned}
\end{equation}
provided that $U_\lambda^+ \in L^{\tau+1}(\Sigma_{\lambda}).$  This combined with  \eqref{w13} implies that $U^+ \in H^1(\Sigma_\lambda)$ as $U_{\lambda}^{+}\in L^{\tau+1}(\Sigma_{\lambda})\cap L^{\infty}(\Sigma_{\lambda})$  and $\frac{1}{|\xi|_{\mathbb{H}^{n}}^{2Q}}$ is integrable in $\Sigma_{\lambda}$

On the other hand, by Sobolev inequality (see \cite{GL92, GKR}), we have
\begin{equation}\label{w15}
\int_{\Sigma_{\lambda}}|\nabla_{\mathbb{H}^{n}}U_{\lambda}^{+}\eta_{\epsilon}|^{2}d\xi\geq C\left(\int_{\Sigma_{\lambda}}(U_{\lambda}^{+}\eta_{\epsilon})^{\frac{2Q}{Q-2}}d\xi\right)^{\frac{Q-2}{Q}}.
\end{equation}

Applying  monotone and dominated convergence theorem along with \eqref{w15} by letting $\epsilon\rightarrow0$ in \eqref{w13}, we obtain
\begin{equation*}
\begin{aligned}
\bigg(\int_{\Sigma_{\lambda}}(U_{\lambda}^{+})^{\frac{2Q}{Q-2}}d\xi\bigg)^{\frac{Q-2}{Q}}\leq 
C_{\lambda}\bigg(\int_{\Sigma_{\lambda}^{v}}\frac{1}{|\xi|_{\mathbb{H}^{n}}^{2Q}}d\xi\bigg)^{\frac{2}{Q}}
\bigg(\int_{\Sigma_{\lambda}}(U_{\lambda}^{+})^{\frac{2Q}{Q-2}}d\xi\bigg)^{\frac{Q-2}{2Q}}\bigg(\int_{\Sigma_{\lambda}}(V_{\lambda}^{+})^{\frac{2Q}{Q-2}}d\xi\bigg)^{\frac{Q-2}{2Q}}.
\end{aligned}
\end{equation*}

This completes the proof of Lemma \ref{lemm2}.
\end{proof}

\begin{lemma} \label{lemm3}
There exists $\lambda_{o}>0$ such that for all $\lambda\geq\lambda_{o}$, $U_{\lambda}(\xi)\leq0$ and
$V_{\lambda}(\xi)\leq0$ for all $\xi\in\Sigma_{\lambda}\backslash \{0_{\lambda}\}$.
\end{lemma}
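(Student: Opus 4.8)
The plan is to use the coupled integral inequalities \eqref{w5} and \eqref{w6} of Lemma \ref{lemm2} directly, with no recourse to a maximum principle: I will chain them into a single inequality involving $U_\lambda^+$ alone and then observe that the resulting coefficient is strictly smaller than $1$ once $\lambda$ is large. The geometric mechanism is that $\Sigma_\lambda$ recedes to spatial infinity as $\lambda\to+\infty$, so the Kaplan-weight integrals $\int_{\Sigma_\lambda}|\xi|_{\mathbb{H}^n}^{-2Q}\,d\xi$ decay, while the constants $C_\lambda$ stay bounded because $\lambda\mapsto C_\lambda$ is nonincreasing by Lemma \ref{lemm2}.

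First I would record that, by Lemma \ref{lemm2}, $U_\lambda^+$ and $V_\lambda^+$ belong to $L^{\tau+1}(\Sigma_\lambda)$ with $\tau+1=\frac{2Q}{Q-2}$, so both norms $\|U_\lambda^+\|_{L^{\tau+1}(\Sigma_\lambda)}$ and $\|V_\lambda^+\|_{L^{\tau+1}(\Sigma_\lambda)}$ are finite, as is $\int_{\Sigma_\lambda}|\xi|_{\mathbb{H}^n}^{-2Q}\,d\xi$ (since $\Sigma_\lambda\subset\mathbb{H}^n\setminus B_{\mathbb{H}^n}(0,r)$ for a suitable $r=r(\lambda)>0$ and $2Q>Q$). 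Substituting \eqref{w6} into \eqref{w5} and using $\Sigma_\lambda^u,\Sigma_\lambda^v\subset\Sigma_\lambda$ yields
$$
\Big(\int_{\Sigma_\lambda}(U_\lambda^+)^{\frac{2Q}{Q-2}}\,d\xi\Big)^{\frac{Q-2}{2Q}}\leq C_\lambda^2\Big(\int_{\Sigma_\lambda}|\xi|_{\mathbb{H}^n}^{-2Q}\,d\xi\Big)^{\frac{4}{Q}}\Big(\int_{\Sigma_\lambda}(U_\lambda^+)^{\frac{2Q}{Q-2}}\,d\xi\Big)^{\frac{Q-2}{2Q}}.
$$
Next I claim the prefactor tends to $0$ as $\lambda\to+\infty$. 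On $\Sigma_\lambda$ one has, by \eqref{f5}, $|\xi|_{\mathbb{H}^n}=\bigl(\sum_i(x_i^2+y_i^2)^2+t^2\bigr)^{1/4}\geq t^{1/2}>\lambda^{1/2}$, so $\Sigma_\lambda\subset\mathbb{H}^n\setminus B_{\mathbb{H}^n}(0,\sqrt{\lambda})$ and hence $\int_{\Sigma_\lambda}|\xi|_{\mathbb{H}^n}^{-2Q}\,d\xi\leq\int_{|\xi|_{\mathbb{H}^n}>\sqrt{\lambda}}|\xi|_{\mathbb{H}^n}^{-2Q}\,d\xi\to 0$, while $C_\lambda\leq C_{1}$ for every $\lambda\geq 1$ by monotonicity.

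Then I would fix $\lambda_o>0$ so large that $C_\lambda^2\big(\int_{\Sigma_\lambda}|\xi|_{\mathbb{H}^n}^{-2Q}\,d\xi\big)^{4/Q}<1$ for all $\lambda\geq\lambda_o$. For such $\lambda$ the displayed inequality is impossible unless $\|U_\lambda^+\|_{L^{2Q/(Q-2)}(\Sigma_\lambda)}=0$: if this norm were positive we could divide through and reach $1<1$. Hence $U_\lambda^+\equiv0$ a.e.\ on $\Sigma_\lambda$, and since $U_\lambda=\bar u-\bar u_\lambda$ is continuous on $\Sigma_\lambda\setminus\{0_\lambda\}$ (Lemma \ref{lemm1}), we conclude $U_\lambda(\xi)\leq0$ there. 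Feeding $U_\lambda^+\equiv0$ into \eqref{w6} then forces $\|V_\lambda^+\|_{L^{2Q/(Q-2)}(\Sigma_\lambda)}=0$, whence $V_\lambda(\xi)\leq0$ on $\Sigma_\lambda\setminus\{0_\lambda\}$ as well (one could equally well start with \eqref{w5}$\to$\eqref{w6} to kill $V_\lambda^+$ first; either order works).

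The main obstacle is bookkeeping rather than depth: one must control the chained coefficient \emph{uniformly} for large $\lambda$, which is exactly where the monotonicity of $\lambda\mapsto C_\lambda$ from Lemma \ref{lemm2} is combined with the decay of the Kaplan-weight integral; a minor point is to isolate the trivial case in which $U_\lambda^+$ (or $V_\lambda^+$) already has zero norm from the case in which one divides by that norm.
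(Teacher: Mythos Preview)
Your proposal is correct and follows essentially the same route as the paper: chain \eqref{w5} into \eqref{w6}, use that $\int_{\Sigma_\lambda}|\xi|_{\mathbb{H}^n}^{-2Q}\,d\xi\to 0$ as $\lambda\to\infty$ together with the monotonicity of $C_\lambda$ to force the coefficient below $1$, and conclude $U_\lambda^+\equiv 0$, $V_\lambda^+\equiv 0$. The only differences are cosmetic---you spell out the inclusion $\Sigma_\lambda\subset\{|\xi|_{\mathbb{H}^n}>\sqrt{\lambda}\}$ and the bound $C_\lambda\le C_1$ explicitly, whereas the paper states the smallness of the coefficient directly and writes the threshold as $\tfrac12$ rather than $1$.
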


\begin{proof}
Since $\frac{1}{|\xi|_{\mathbb{H}^{n}}^{2Q}}$ is integrable in $\mathbb{H}^{n}\backslash B_{r}(0)$ for any $r>0$, we have
$$\int_{\Sigma_{\lambda}^{v}}\frac{1}{|\xi|_{\mathbb{H}^{n}}^{2Q}}d\xi\leq\int_{\Sigma_{\lambda}}\frac{1}{|\xi|_{\mathbb{H}^{n}}^{2Q}}d\xi\rightarrow0,\quad as\ \lambda\rightarrow+\infty.$$
It follows that there exists $\lambda_{o}>0$ such that, for all $\lambda\geq\lambda_{o}$, we get
$$C_{\lambda}\bigg(\int_{\Sigma_{\lambda}^{u}}\frac{1}{|\xi|_{\mathbb{H}^{n}}^{2Q}}d\xi\bigg)^{\frac{1}{Q}}\bigg(\int_{\Sigma_{\lambda}^{v}}\frac{1}{|\xi|_{\mathbb{H}^{n}}^{2Q}}d\xi\bigg)^{\frac{1}{Q}}\leq\frac{1}{2}.$$
By Lemma \ref{lemm2}, we obtain that 
$$\int_{\Sigma_{\lambda}}|U_{\lambda}^{+}|^{2}d\xi=0,\ \text{and}\ \int_{\Sigma_{\lambda}}|V_{\lambda}^{+}|^{2}d\xi=0,$$
for all $\lambda\geq\lambda_{o}$, this implies that 
$U_{\lambda}(\xi)\leq0$ and
$V_{\lambda}(\xi)\leq0$ for all $\xi\in\Sigma_{\lambda}\backslash \{0_{\lambda}\}$ and $\lambda\geq\lambda_{o}$.

This completes the proof of Lemma \ref{lemm3}.
\end{proof}

Next, we can move the plane from the right to the left. More precisely, we define
$$\lambda_{1}:=\inf\{\lambda>0\mid U_{\mu}(\xi)\leq0,\ V_{\mu}(\xi)\leq0,\ \forall\,\xi\in\Sigma_{\mu}\backslash \{0_{\mu}\},\forall\, \mu\geq\lambda\}.$$ 
This is well-defined by Lemma \ref{lemm3}.
Now we have the following result.

\begin{lemma} \label{lemm4}
If $\lambda_{1}>0$, then $U_{\lambda_{1}}(\xi)\equiv0$, $V_{\lambda_{1}}(\xi)\equiv0$ for all $\xi\in \Sigma_{\lambda_{1}}\backslash \{0_{\lambda_{1}}\}$.
\end{lemma}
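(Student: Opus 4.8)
The strategy is a standard moving‑plane continuity/maximality argument: at the critical position $\lambda_1$ we either have that $U_{\lambda_1}$ and $V_{\lambda_1}$ vanish identically, or else we can push the plane a little further past $\lambda_1$, contradicting the definition of $\lambda_1$ as an infimum. So the plan is to argue by contradiction: assume that at least one of $U_{\lambda_1}, V_{\lambda_1}$ is not identically zero on $\Sigma_{\lambda_1}\setminus\{0_{\lambda_1}\}$, and derive that the moving plane can be continued.

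\emph{Step 1: Strong positivity at $\lambda_1$.} By continuity in $\lambda$ and the definition of $\lambda_1$, we first record that $U_{\lambda_1}\le 0$ and $V_{\lambda_1}\le 0$ on $\Sigma_{\lambda_1}\setminus\{0_{\lambda_1}\}$. Next, I would feed these sign conditions back into the equations \eqref{cc4}--\eqref{c5} satisfied by $(\bar u,\bar v)$ and $(\bar u_{\lambda_1},\bar v_{\lambda_1})$: subtracting, $-\Delta_{\mathbb{H}^n}U_{\lambda_1}$ equals a difference of the two right‑hand sides, which — using monotonicity hypotheses $(i)$, $(ii)$ and the inequality $|\xi|_{\mathbb{H}^n}\ge|\xi_{\lambda_1}|_{\mathbb{H}^n}$ on $\Sigma_{\lambda_1}$ exactly as in \eqref{wv8}--\eqref{w8} — has a sign controlled by $V_{\lambda_1}$. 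One then obtains a differential inequality of the form $-\Delta_{\mathbb{H}^n}U_{\lambda_1}\le c(\xi)V_{\lambda_1}$ and $-\Delta_{\mathbb{H}^n}V_{\lambda_1}\le c(\xi)U_{\lambda_1}$ with $c\ge 0$ locally bounded on $\Sigma_{\lambda_1}\setminus\{0_{\lambda_1}\}$. Since both right‑hand sides are $\le 0$, each of $U_{\lambda_1},V_{\lambda_1}$ is subharmonic (for $-\Delta_{\mathbb{H}^n}$ it is the right variational sign) on $\Sigma_{\lambda_1}\setminus\{0_{\lambda_1}\}$; by the strong maximum principle (Bony \cite{bony}) for the non‑divergence form sub‑Laplacian, if $U_{\lambda_1}$ attains its maximum value $0$ at an interior point it is identically $0$ on the connected component, and similarly for $V_{\lambda_1}$. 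So either $U_{\lambda_1}\equiv 0$ or $U_{\lambda_1}<0$ strictly in $\Sigma_{\lambda_1}\setminus\{0_{\lambda_1}\}$, and likewise for $V_{\lambda_1}$. (If one of them is $\equiv0$, say $U_{\lambda_1}\equiv0$, then the equation for $V_{\lambda_1}$ forces $-\Delta_{\mathbb{H}^n}V_{\lambda_1}=0$, so $V_{\lambda_1}$ is sub‑Laplacian‑harmonic, bounded, and tends to $0$ at infinity by \eqref{c6}, hence $V_{\lambda_1}\equiv0$; this is exactly the conclusion we want, so we may assume from now on that \emph{both} $U_{\lambda_1}<0$ and $V_{\lambda_1}<0$ strictly.)

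\emph{Step 2: Continuation past $\lambda_1$, using the integral inequality.} To contradict the infimum, I want to show that there is $\delta>0$ such that $U_{\lambda}\le 0$ and $V_{\lambda}\le 0$ on $\Sigma_\lambda\setminus\{0_\lambda\}$ for all $\lambda\in(\lambda_1-\delta,\lambda_1]$. The mechanism is the same as in Lemma \ref{lemm2}/Lemma \ref{lemm3}: from \eqref{w5}--\eqref{w6} we get
\begin{equation*}
\|U_\lambda^+\|_{L^{\frac{2Q}{Q-2}}(\Sigma_\lambda)}\le C_\lambda\Big(\int_{\Sigma_\lambda^v}|\xi|_{\mathbb{H}^n}^{-2Q}\,d\xi\Big)^{\frac2Q}\|V_\lambda^+\|_{L^{\frac{2Q}{Q-2}}(\Sigma_\lambda)}
\end{equation*}
and the symmetric estimate, so that the product $\|U_\lambda^+\|\,\|V_\lambda^+\|$ is bounded by $C_\lambda^2\big(\int_{\Sigma_\lambda^u}|\xi|_{\mathbb{H}^n}^{-2Q}\big)^{2/Q}\big(\int_{\Sigma_\lambda^v}|\xi|_{\mathbb{H}^n}^{-2Q}\big)^{2/Q}$ times itself, and it suffices to show the smallness factor $C_{\lambda_1}^2\big(\int_{\Sigma_\lambda^u}|\xi|_{\mathbb{H}^n}^{-2Q}\big)^{2/Q}\big(\int_{\Sigma_\lambda^v}|\xi|_{\mathbb{H}^n}^{-2Q}\big)^{2/Q}<1$ for $\lambda$ slightly below $\lambda_1$. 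Since $C_\lambda$ is nonincreasing in $\lambda$ it is bounded by $C_{\lambda_1-\delta}$, which is still finite; the key is that the measures $|\Sigma_\lambda^u|$ and $|\Sigma_\lambda^v|$ (weighted by $|\xi|_{\mathbb{H}^n}^{-2Q}$) become arbitrarily small as $\lambda\uparrow\lambda_1$. This is where Step 1 is used: because $U_{\lambda_1}<0$ and $V_{\lambda_1}<0$ \emph{strictly} on the whole region, and because $U_\lambda,V_\lambda$ depend continuously on $\lambda$ uniformly on compact sets, the sets $\Sigma_\lambda^u=\{\xi:h(|\xi|^{Q-2}\bar u)>0,\ U_\lambda>0\}$ (and $\Sigma_\lambda^v$) can only be nonempty near the plane $T_{\lambda_1}$ (where $U_{\lambda_1}$ vanishes) and near infinity (where $U_\lambda\to0$) — and on both of these "thin" regions the weight $|\xi|_{\mathbb{H}^n}^{-2Q}$ is integrably small, so $\int_{\Sigma_\lambda^v}|\xi|_{\mathbb{H}^n}^{-2Q}\,d\xi\to 0$ as $\lambda\uparrow\lambda_1$. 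This makes the smallness factor $<1$, forcing $\|U_\lambda^+\|=\|V_\lambda^+\|=0$, i.e. $U_\lambda\le0$ and $V_\lambda\le0$; hence $\lambda_1$ was not the infimum, a contradiction.

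\emph{Step 3: Conclusion.} Therefore the assumption in Step 1 fails: at least one of $U_{\lambda_1},V_{\lambda_1}$ vanishes identically, and then by the argument at the end of Step 1 the other one does too. Hence $U_{\lambda_1}\equiv 0$ and $V_{\lambda_1}\equiv 0$ on $\Sigma_{\lambda_1}\setminus\{0_{\lambda_1}\}$.

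\emph{Main obstacle.} The delicate point is Step 2 — precisely the claim that the weighted measures of $\Sigma_\lambda^u,\Sigma_\lambda^v$ tend to $0$ as $\lambda\uparrow\lambda_1$. Away from a neighbourhood of $T_{\lambda_1}$ and away from infinity one has $U_{\lambda_1}<0$ with a strictly negative bound on compact subsets, so by continuity $U_\lambda<0$ there for $\lambda$ close to $\lambda_1$, making $\Sigma_\lambda^u$ empty on that compact part; one must then handle the two remaining thin pieces (near the plane, near infinity) carefully, using that $|\xi|_{\mathbb{H}^n}^{-2Q}$ is integrable at infinity and that the slab near $T_{\lambda_1}$ has small weighted measure. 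A secondary subtlety is to make sure the strong maximum principle of \cite{bony} applies despite the singular point $0_{\lambda_1}$ being removed — this is fine because $U_{\lambda_1},V_{\lambda_1}$ are bounded near $0_{\lambda_1}$ (by \eqref{32q} and positivity of $u,v$) and one works on $\Sigma_{\lambda_1}$ minus that single point, or uses that removable‑singularity considerations let one treat the maximum principle on the full half‑space.
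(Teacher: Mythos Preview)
Your overall architecture matches the paper's proof: continuity gives $U_{\lambda_1},V_{\lambda_1}\le 0$; the monotonicity estimates \eqref{wv8}--\eqref{w8} feed into $-\Delta_{\mathbb H^n}U_{\lambda_1}\le 0$ so that the strong maximum principle yields the dichotomy ``$\equiv 0$ or $<0$''; and under the strict--negativity alternative the integral inequalities of Lemma~\ref{lemm2} together with the shrinking of $\Sigma_\lambda^u,\Sigma_\lambda^v$ produce the contradiction. The paper packages your compactness--plus--thin--slab argument of Step~2 more cleanly via dominated convergence ($\chi_{\Sigma_\lambda^v}\to 0$ pointwise off $T_{\lambda_1}\cup\{0_{\lambda_1}\}$, dominated by $\chi_{\Sigma_{\lambda_1-\delta}}$), but the content is the same.

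There is one incorrect step, in your parenthetical at the end of Step~1. If $U_{\lambda_1}\equiv 0$, subtracting the second equations in \eqref{cc4} and \eqref{c5} gives
\[
-\Delta_{\mathbb H^n}V_{\lambda_1}
=\bigl[k(|\xi|_{\mathbb H^n}^{Q-2}\bar u)-k(|\xi_{\lambda_1}|_{\mathbb H^n}^{Q-2}\bar u)\bigr]\,\bar u^{\frac{Q+2}{Q-2}},
\]
which is only $\le 0$ (since $k$ is nonincreasing and $|\xi|_{\mathbb H^n}>|\xi_{\lambda_1}|_{\mathbb H^n}$ for $\lambda_1>0$), not $=0$; moreover $V_{\lambda_1}$ need not be bounded near $0_{\lambda_1}$, so your Liouville--type argument does not go through. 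The easy fix, implicit in the paper, is this: the same strong maximum principle gives the dichotomy for $V_{\lambda_1}$ as well, and in the mixed case (say $U_{\lambda_1}\equiv 0$, $V_{\lambda_1}<0$) your Step~2 contradiction still fires, because it only needs the \emph{product}
\[
C_\lambda\Bigl(\int_{\Sigma_\lambda^u}|\xi|_{\mathbb H^n}^{-2Q}\,d\xi\Bigr)^{1/Q}
\Bigl(\int_{\Sigma_\lambda^v}|\xi|_{\mathbb H^n}^{-2Q}\,d\xi\Bigr)^{1/Q}
\]
to be small: the second factor tends to $0$ by the $V_{\lambda_1}<0$ argument, while the first stays bounded by $\bigl(\int_{\Sigma_{\lambda_1-\delta}}|\xi|_{\mathbb H^n}^{-2Q}\,d\xi\bigr)^{1/Q}<\infty$.
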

\begin{proof}
We deduce  from Lemma \ref{lemm3}, by using continuity of $U_\lambda$ and $V_\lambda$, that $U_{\lambda_{1}}(\xi)\leq0$, $V_{\lambda_{1}}(\xi)\leq0$ for $\xi\in \Sigma_{\lambda_{1}}\backslash \{0_{\lambda_{1}}\}$.

By \eqref{w8}, for $\xi\in \Sigma_{\lambda_{1}}\backslash \{0_{\lambda_{1}}\}$, we have 
$h(|\xi|_{\mathbb{H}^{n}}^{Q-2}\bar{v})\bar{v}^{\frac{Q+2}{Q-2}}\leq h(|\xi_{\lambda_{1}}|_{\mathbb{H}^{n}}^{Q-2}\bar{v}_{\lambda_{1}})\bar{v}_{\lambda_{1}}^{\frac{Q+2}{Q-2}}$, providing that $V_{\lambda_{1}}(\xi)\leq0$. Therefore, by \eqref{cc4} and \eqref{c5}, we get
$$
\begin{aligned}
-\Delta_{\mathbb{H}^{n}}\bar{u}\leq-\Delta_{\mathbb{H}^{n}}\bar{u}_{\lambda_{1}},
\end{aligned}
$$
which implies that $-\Delta_{\mathbb{H}^{n}}U_{\lambda_{1}}\leq0$. Since $U_{\lambda_{1}}\leq0$, by the maximum principle, either $U_{\lambda_{1}}\equiv0$ or $U_{\lambda_{1}}<0$ in $\Sigma_{\lambda_{1}}\backslash \{0_{\lambda_{1}}\}$.

Now,  let us suppose that $U_{\lambda_{1}}<0$ in $\Sigma_{\lambda_{1}}\backslash \{0_{\lambda_{1}}\}$. We note that $\frac{1}{|\xi|_{\mathbb{H}^{n}}^{2Q}}\chi_{\Sigma_{\lambda}^{v}}\rightarrow0$ pointwise  as $\lambda\rightarrow\lambda_{1}$ in $\mathbb{H}^{n}\backslash(T_{\lambda_{1}}\cup\{0_{\lambda_{1}}\})$, where $\chi_{A}$ is the characteristic function of the set $A$. 
Also, for $\lambda\in (\lambda_{1}-\delta,\lambda_{1}]$, we have $\frac{1}{|\xi|_{\mathbb{H}^{n}}^{2Q}}\chi_{\Sigma_{\lambda}^{v}}\leq\frac{1}{|\xi|_{\mathbb{H}^{n}}^{2Q}}\chi_{\Sigma_{\lambda-\delta}}\in L^{1}(\Sigma_{\lambda})$. Therefore, by dominated convergence theorem, we obtain
$$\int_{\Sigma_{\lambda}^{v}}\frac{1}{|\xi|_{\mathbb{H}^{n}}^{2Q}}d\xi\rightarrow0,\ as\ \lambda\rightarrow\lambda_{1},$$
and consequently, for $\lambda \in (\lambda_{1}-\delta,\lambda_{1})$,
$$C_{\lambda}\bigg(\int_{\Sigma_{\lambda}^{u}}\frac{1}{|\xi|_{\mathbb{H}^{n}}^{2Q}}d\xi\bigg)^{\frac{1}{Q}}\bigg(\int_{\Sigma_{\lambda}^{v}}\frac{1}{|\xi|_{\mathbb{H}^{n}}^{2Q}}d\xi\bigg)^{\frac{1}{Q}}\leq\frac{1}{2}.$$

Following the similar argument as in the proof of Lemma \ref{lemm3}, we conclude that 
$U_{\lambda}(\xi)\leq0$ and
$V_{\lambda}(\xi)\leq0$ in $\Sigma_{\lambda}\backslash \{0_{\lambda}\}$ for $\lambda\in (\lambda_{1}-\delta,\lambda_{1}]$, which contradicts with the definition of $\lambda_{1}$.

This completes the proof of Lemma \ref{lemm4}.
\end{proof}

\begin{lemma} \label{lemm5}
Assume that $u,v, f$ and $g$ are as in the statement of Theorem \ref{th1}, and suppose that $(u,v)$ is positive cylindrical on $\mathbb{H}^n$.   Then $\bar{u}, \bar{v}$ are symmetric with respect to $T_{0}$, that is, $\bar{u}$ and $\bar{v}$ are even functions in $t$-variable.
\end{lemma}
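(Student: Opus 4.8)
The plan is to apply the method of moving planes in the anisotropic direction from both sides, using the integral inequalities of Lemma \ref{lemm2} in place of a maximum principle, and to exploit the singularity of the CR inversion at the origin together with the non-degeneracy hypothesis $(iii)$ to force the limiting plane to be $T_0$. By Lemma \ref{lemm3} the procedure starts for all large $\lambda$, so $\lambda_1$ is well defined; by Lemma \ref{lemm4}, if $\lambda_1>0$ then $U_{\lambda_1}\equiv 0$ and $V_{\lambda_1}\equiv 0$ on $\Sigma_{\lambda_1}\setminus\{0_{\lambda_1}\}$, that is, $\bar u,\bar v$ are symmetric about $T_{\lambda_1}$. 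The heart of the proof is to exclude $\lambda_1>0$.

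So assume $\lambda_1>0$, and note that $0=(0,0,0)$ has $t$-coordinate $0<\lambda_1$ while $0_{\lambda_1}=(0,0,2\lambda_1)$ lies in the interior of $\Sigma_{\lambda_1}$ with $0_{\lambda_1}\neq 0$. If one of $\bar u,\bar v$ is unbounded near $0$, say $\limsup_{\xi\to 0}\bar u(\xi)=+\infty$ (the case of $\bar v$ being identical), then for a sequence $\xi_n\to 0$, which eventually lies in $\{t<\lambda_1\}$, the points $\zeta_n:=(\xi_n)_{\lambda_1}\to 0_{\lambda_1}$ satisfy $\bar u(\zeta_n)=\bar u((\zeta_n)_{\lambda_1})=\bar u(\xi_n)\to+\infty$ by $U_{\lambda_1}\equiv 0$, contradicting the continuity of $\bar u$ at $0_{\lambda_1}\neq 0$ (Lemma \ref{lemm1}). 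Otherwise $\bar u,\bar v$ extend continuously to $\mathbb{H}^n$ (with $\bar u(0):=\bar u(0_{\lambda_1})$, $\bar v(0):=\bar v(0_{\lambda_1})$), the identities $\bar u=\bar u_{\lambda_1}$, $\bar v=\bar v_{\lambda_1}$ propagate to $\mathbb{H}^n\setminus\{0,0_{\lambda_1}\}$, and there both \eqref{cc4} and \eqref{c5} hold; comparing the two resulting expressions for $-\Delta_{\mathbb{H}^n}\bar u$ (resp. $-\Delta_{\mathbb{H}^n}\bar v$) and cancelling $\bar v^{\tau}>0$ (resp. $\bar u^{\tau}>0$) yields
\begin{equation*}
h\!\left(|\xi|_{\mathbb{H}^n}^{Q-2}\bar v(\xi)\right)=h\!\left(|\xi_{\lambda_1}|_{\mathbb{H}^n}^{Q-2}\bar v(\xi)\right),\qquad k\!\left(|\xi|_{\mathbb{H}^n}^{Q-2}\bar u(\xi)\right)=k\!\left(|\xi_{\lambda_1}|_{\mathbb{H}^n}^{Q-2}\bar u(\xi)\right)
\end{equation*}
on $\mathbb{H}^n\setminus\{0,0_{\lambda_1}\}$. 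Since $\lambda_1>0$ gives $|\xi|_{\mathbb{H}^n}>|\xi_{\lambda_1}|_{\mathbb{H}^n}$ strictly on $\Sigma_{\lambda_1}\setminus T_{\lambda_1}$ and $h,k$ are nonincreasing by $(ii)$, $h$ is constant on each interval with endpoints $|\xi_{\lambda_1}|_{\mathbb{H}^n}^{Q-2}\bar v(\xi)$ and $|\xi|_{\mathbb{H}^n}^{Q-2}\bar v(\xi)=v(\tilde\xi)$, and similarly for $k$; since in this case $v$ decays at infinity (invert back the relation $v(\eta)=|\tilde\eta|_{\mathbb{H}^n}^{Q-2}\bar v(\tilde\eta)$ and use $\bar v(0)<\infty$), sweeping $\xi$ so that $|\xi|_{\mathbb{H}^n}^{Q-2}\bar v(\xi)$ covers the full range of $v$ makes this continuously varying family of intervals chain into an interval containing $(0,\sup_{\mathbb{H}^n}v)$, so $h$ is constant on $(0,\sup_{\mathbb{H}^n}v)$ and, symmetrically, $k$ on $(0,\sup_{\mathbb{H}^n}u)$, contradicting $(iii)$. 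Hence $\lambda_1=0$.

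With $\lambda_1=0$, letting $\lambda\downarrow 0$ and using the continuity of $\lambda\mapsto U_\lambda(\xi),V_\lambda(\xi)$ gives $U_0\le 0$ and $V_0\le 0$ on $\Sigma_0$. Running the mirror construction (planes sliding from $\lambda=-\infty$ upward on the half-space $\{t<\lambda\}$; this is legitimate since $-\Delta_{\mathbb{H}^n}$ is invariant under the $H$-reflection about every $T_\lambda$, $|\xi|_{\mathbb{H}^n}>|\xi_\lambda|_{\mathbb{H}^n}$ holds again on that half-space for $\lambda<0$, and $0=(0,0,0)$, $0_\lambda=(0,0,2\lambda)$ play the same roles) and excluding a limiting plane with $\lambda<0$ by the same two arguments, one obtains the reversed inequalities $U_0\ge 0$, $V_0\ge 0$ on $\Sigma_0$. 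Combining, $U_0\equiv 0$ and $V_0\equiv 0$ on $\Sigma_0$, i.e. $\bar u(r,t)=\bar u(r,-t)$ and $\bar v(r,t)=\bar v(r,-t)$: $\bar u,\bar v$ are even in $t$, which is the assertion.

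The step I expect to be the main obstacle is excluding $\lambda_1>0$ in the sub-case where both CR inversions remain bounded at the origin: one must upgrade the rigidity forced by symmetry about $T_{\lambda_1}$ into the statement that $h$ and $k$ are constant on the \emph{entire} value ranges $(0,\sup v)$ and $(0,\sup u)$, and it is exactly here --- and nowhere else --- that the monotonicity in $(ii)$, the non-degeneracy condition $(iii)$, and the continuity and positivity of $u,v$ are genuinely used; the analytic side of sliding the plane is already supplied by Lemma \ref{lemm2}.
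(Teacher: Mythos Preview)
Your proposal is correct and follows essentially the same route as the paper: run the moving plane, invoke Lemma~\ref{lemm4} to get symmetry about $T_{\lambda_1}$ if $\lambda_1>0$, then compare \eqref{cc4} and \eqref{c5} to force $h$ (and $k$) constant on $(0,\sup v)$ (resp.\ $(0,\sup u)$), contradicting $(iii)$; finally, when $\lambda_1=0$, slide the plane from the other side and match up.

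One difference worth noting: you explicitly split the exclusion of $\lambda_1>0$ into two sub-cases according to whether $\bar u,\bar v$ blow up at the origin, and in the bounded sub-case you deduce that $v$ decays at infinity before arguing that the values $|\xi|_{\mathbb{H}^n}^{Q-2}\bar v(\xi)=v(\tilde\xi)$ sweep out $(0,\sup v)$. The paper skips this case distinction and simply asserts that $s=|\xi|_{\mathbb{H}^n}^{Q-2}\bar v(\xi)$ takes values close to $0$ by citing \eqref{32q}; your version is in fact more careful here, since \eqref{32q} alone only gives $s\to v(0)>0$ as $|\xi|_{\mathbb{H}^n}\to\infty$, and one really needs either the singularity contradiction or the decay of $v$ (equivalently, boundedness of $\bar v$ at $0$) to reach values of $s$ near $0$. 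So your added case split is not superfluous---it patches a step the paper leaves implicit.
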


\begin{proof}

To prove that $\bar{u},\bar{v}$ are  symmetric, we still use the method of moving plane and prove the symmetry. We can carry out the procedure as above. If $\lambda_{1}>0$, then it follows from Lemma \ref{lemm4} that $\bar{u},\bar{v}$ are symmetric with respect to $T_{\lambda_{1}}$. This means that $\bar{u}=\bar{u}_{\lambda_{1}}$  and $\bar{v}=\bar{v}_{\lambda_{1}},$ which implies that $-\Delta_{\mathbb{H}^n}\bar{u}=-\Delta_{\mathbb{H}^n}\bar{u}_{\lambda_{1}}.$ This combined with $\bar{v}=\bar{v}_{\lambda_{1}},$ \eqref{cc4} and \eqref{c5} show that
\begin{equation} \label{shy}
h(|\xi_{\lambda_{1}}|_{\mathbb{H}^{n}}^{Q-2}\bar{v}_{\lambda_{1}}(\xi))=h(|\xi|_{\mathbb{H}^{n}}^{Q-2}\bar{v}(\xi)).
\end{equation}
By the assumption that $h$ is nonincreasing and the fact that $|\xi_{\lambda_{1}}|<|\xi|$ for $\xi \in \Sigma_{\lambda_{1}},$ we conclude from \eqref{shy} that $h(s)$ is constant in a left neighbourhood of $s=|\xi|_{\mathbb{H}^{n}}^{Q-2}\bar{v}(\xi)= v\left( \frac{(\tilde{x}, \tilde{y}, -t)}{|\xi|_{\mathbb{H}^{n}}^{4}} \right),\ t>\lambda_1$. In a similar manner, we can show that $h$ is constant in a right neighbourhood of $s=|\xi|_{\mathbb{H}^{n}}^{Q-2}\bar{v}(\xi)= v\left( \frac{(\tilde{x}, \tilde{y}, -t)}{|\xi|_{\mathbb{H}^{n}}^{4}} \right),\  t<\lambda_1$. In particular, this holds for $s$ close to $0$ because $s=v\left( \frac{(\tilde{x}, \tilde{y}, -t)}{|\xi|_{\mathbb{H}^{n}}^{4}} \right)$ converges to $0$ at infinity by \eqref{32q}. 
Therefore we conclude that if $\lambda_1>0,$ then $h$ is constant on $(0, \sup_{\xi \in \mathbb{H}^n} v(\xi)),$ which is a contradiction to our assumption $(iii)$. We also derive the same contradiction if $k$ is assumed  not be constant on $(0, \sup_{\xi \in \mathbb{H}^n} u(\xi)).$

If $\lambda_{1}=0$, then we conclude by continuity that $\bar{u}(\xi)\leq\bar{u}_{0}(\xi)$ and $\bar{v}(\xi)\leq\bar{v}_{0}(\xi)$ for all $\xi\in\Sigma_{0}$. In this case, we can also perform the moving plane procedure from the left and find a corresponding $\lambda'_{1}$. If $\lambda'_{1}<0$, an analogue to Lemma \ref{lemm4} shows that $\bar{u},\bar{v}$ are symmetric with respect to $T_{\lambda'_{1}}$ and we can obtain contradiction in this case as for $\lambda_1>0$ previously.   If $\lambda'_{1}=0$, then we conclude by continuity that $\bar{u}_{0}(\xi)\leq\bar{u}(\xi)$ and $\bar{v}_{0}(\xi)\leq\bar{v}(\xi)$ for all $\xi\in\Sigma_{0}$. The fact and the above inequalities imply that  $\bar{u}_{0}(\xi)=\bar{u}(\xi)$ and $\bar{v}_{0}(\xi)=\bar{v}(\xi)$ and  $\lambda_{1}=\lambda_{1}'=0.$ This shows that $\bar{u}$ and $\bar{v}$ are symmetric with respect to $T_{0}$. This completes the proof of Lemma \ref{lemm5}.
\end{proof}

Next we are ready to present the proof of Theorem \ref{th1}.
\begin{proof}
By Lemma \ref{lemm5}, we conclude that the CR inversion $\bar{u}$ and $\bar{v}$ of functions $u$ and $v$ respectively are symmetric with respect to  $T_{0},$ that is, $\bar{u}$ and $\bar{v}$ of $u$ and $v$ are even in $t$-variable. Since the choice of origin  is arbitrary in $t$-axis, then we conclude that $u$ and $v$ are independent of $t$. However, this shows that $u$ and $v$ satisfy the system
\begin{equation}\label{d11}
\begin{cases}
\ -\Delta u(\xi)=f(v(\xi)) ,& \xi\in\mathbb{R}^{2n},\\
\ -\Delta v(\xi)=g(u(\xi)) ,& \xi\in\mathbb{R}^{2n}.
\end{cases}
\end{equation}
Since $f,g$ are nondecreasing in $(0,\infty)$, and
$$\frac{f(t)}{t^{\frac{2n+2}{2n-2}}}=\frac{f(t)}{t^{\frac{Q+2}{Q-2}}}t^{\frac{Q+2}{Q-2}-\frac{2n+2}{2n-2}},$$
$$\frac{g(t)}{t^{\frac{2n+2}{2n-2}}}=\frac{g(t)}{t^{\frac{Q+2}{Q-2}}}t^{\frac{Q+2}{Q-2}-\frac{2n+2}{2n-2}},$$
are nonincreasing in $t$, it follows immediately  from   \cite[Theorem 1.1]{GL08} as $2n \geq 3$ along with the assumption $h$ or $k$ is not constant function,  that $(u,v)\equiv(C_1, C_2)$ for some constant $C_1$ and $C_2$ with $f(C_1)=0$ and $g(C_2)=0.$ 

This completes the proof of Theorem \ref{th1}.
\end{proof}

Next we provide a proof of Corollary \ref{co1}.

{\it \bf Proof of Corollary \ref{co1}:} For $n \geq 2,$ the proof is an immediate consequence of Theorem \ref{th1}. For $n=1,$ we repeat the proof of Theorem \ref{th1} including all supporting lemmata (see Remark \ref{rem1}) for the special case when $f(t)=t^p$ and $g(t)=t^q$ and use the corresponding Euclidean result from \cite{31,39, Sou95} (see also \cite{souplet}) for the dimension two instead of \cite[Theorem 1.1]{GL08} to complete the proof.   

\section{Proof of Theorem \ref{th2}}

In this section, by using the similar methods as used in Section 3,
we study the positive cylindrical (weak) solutions to the semilinear systems in the Heisenberg group
with more general nonlinearity
\begin{equation}\label{s1}
\begin{cases}
\ -\Delta_{\mathbb{H}^{n}}u(\xi)=f(u(\xi),v(\xi)) ,& \xi\in\mathbb{H}^{n},\\
\ -\Delta_{\mathbb{H}^{n}}v(\xi)=g(u(\xi),v(\xi)) ,& \xi\in\mathbb{H}^{n},
\end{cases}
\end{equation}
where $\Delta_{\mathbb{H}^{n}}$ is the  sub-Laplacian on the Heisenberg group $\mathbb{H}^{n}$.

We establish Liouville type result stated in Theorem \ref{th2} for the system (\ref{s1}). The spirit of the proofs is still the moving plane method and will be completed similarly to that of Theorem \ref{th1} with the help of some   analogue of Lemmas \ref{lemm2}-\ref{lemm4} in this more general context. Here, we carry over the notation used  in the previous Section. 

Suppose that $(u,v)\in \left(H_{loc}^{1}(\mathbb{H}^{n})\cap C^{0}(\mathbb{H}^{n})\right)\times\left(H_{loc}^{1}(\mathbb{H}^{n})\cap C^{0}(\mathbb{H}^{n})\right)$ be a weak cylindrical solution of system \eqref{s1}.
If $(u,v)$ solves (\ref{s1}), then a direct calculation yields that
\begin{equation}\label{s3}
\begin{cases}
\ -\Delta_{\mathbb{H}^{n}}\bar{u}(\xi)=\frac{1}{|\xi|_{\mathbb{H}^{n}}^{Q+2}}
f(|\xi|_{\mathbb{H}^{n}}^{Q-2}\bar{u}(\xi),|\xi|_{\mathbb{H}^{n}}^{Q-2}\bar{v}(\xi)) ,& \xi\in\mathbb{H}^{n}\backslash \{0\},\\
\ -\Delta_{\mathbb{H}^{n}}\bar{v}(\xi)=\frac{1}{|\xi|_{\mathbb{H}^{n}}^{Q+2}}
g(|\xi|_{\mathbb{H}^{n}}^{Q-2}\bar{u}(\xi),|\xi|_{\mathbb{H}^{n}}^{Q-2}\bar{v}(\xi)) ,& \xi\in\mathbb{H}^{n}\backslash \{0\},
\end{cases}
\end{equation}
and
\begin{equation}\label{ss3}
\begin{cases}
\ -\Delta_{\mathbb{H}^{n}}\bar{u}_{\lambda}(\xi)=\frac{1}{|\xi_{\lambda}|_{\mathbb{H}^{n}}^{Q+2}}
f(|\xi_{\lambda}|_{\mathbb{H}^{n}}^{Q-2}\bar{u}_{\lambda}(\xi),|\xi_{\lambda}|_{\mathbb{H}^{n}}^{Q-2}\bar{v}_{\lambda}(\xi)) ,& \xi\in\mathbb{H}^{n}\backslash \{0_{\lambda}\},\\
\ -\Delta_{\mathbb{H}^{n}}\bar{v}_{\lambda}(\xi)=\frac{1}{|\xi_{\lambda}|_{\mathbb{H}^{n}}^{Q+2}}
g(|\xi_{\lambda}|_{\mathbb{H}^{n}}^{Q-2}\bar{u}_{\lambda}(\xi),|\xi_{\lambda}|_{\mathbb{H}^{n}}^{Q-2}\bar{v}_{\lambda}(\xi)) ,& \xi\in\mathbb{H}^{n}\backslash \{0_{\lambda}\}.
\end{cases}
\end{equation}

\begin{lemma} \label{em1}
Under the assumptions of Theorem \ref{th2}, for any fixed $\lambda>0$, we have $\bar{u},\bar{v}\in L^{\tau+1}(\Sigma_{\lambda})\cap L^{\infty}(\Sigma_{\lambda})$, $U_{\lambda}^{+}(\xi),V_{\lambda}^{+}(\xi)\in L^{\tau+1}(\Sigma_{\lambda})\cap L^{\infty}(\Sigma_{\lambda})\cap H^{1}(\Sigma_{\lambda})$. Moreover, there exists $C_{\lambda}>0$, non-increasing in $\lambda$, such that the followling estimates holds:

\begin{align} \label{p4}
\bigg(\int_{\Sigma_{\lambda}}(U_{\lambda}^{+} )^{\frac{2Q}{Q-2}}d\xi\bigg)^{\frac{Q-2}{Q}}\leq& C_{\lambda}\bigg(\int_{\Sigma_{\lambda}^{u}}\frac{1}{|\xi|_{\mathbb{H}^{n}}^{2Q}}d\xi\bigg)^{\frac{2}{Q}}
\bigg(\int_{\Sigma_{\lambda}}(U_{\lambda}^{+} )^{\frac{2Q}{Q-2}}d\xi\bigg)^{\frac{Q-2}{Q}} \nonumber\\
&+C_{\lambda}\bigg(\int_{\Sigma_{\lambda}^{u}}\frac{1}{|\xi|_{\mathbb{H}^{n}}^{2Q}}d\xi\bigg)^{\frac{2}{Q}}
\bigg(\int_{\Sigma_{\lambda}}(V_{\lambda}^{+} )^{\frac{2Q}{Q-2}}d\xi\bigg)^{\frac{Q-2}{2Q}}
\bigg(\int_{\Sigma_{\lambda}}(U_{\lambda}^{+} )^{\frac{2Q}{Q-2}}d\xi\bigg)^{\frac{Q-2}{2Q}},
\end{align}
and 
\begin{align} \label{p5}
\bigg(\int_{\Sigma_{\lambda}}(V_{\lambda}^{+} )^{\frac{2Q}{Q-2}}d\xi\bigg)^{\frac{Q-2}{Q}}\leq& C_{\lambda}\bigg(\int_{\Sigma_{\lambda}^{v}}\frac{1}{|\xi|_{\mathbb{H}^{n}}^{2Q}}d\xi\bigg)^{\frac{2}{Q}}
\bigg(\int_{\Sigma_{\lambda}}(V_{\lambda}^{+} )^{\frac{2Q}{Q-2}}d\xi\bigg)^{\frac{Q-2}{Q}} \nonumber \\
&+C_{\lambda}\bigg(\int_{\Sigma_{\lambda}^{v}}\frac{1}{|\xi|_{\mathbb{H}^{n}}^{2Q}}d\xi\bigg)^{\frac{2}{Q}}
\bigg(\int_{\Sigma_{\lambda}}(U_{\lambda}^{+} )^{\frac{2Q}{Q-2}}d\xi\bigg)^{\frac{Q-2}{2Q}}
\bigg(\int_{\Sigma_{\lambda}}(V_{\lambda}^{+} )^{\frac{2Q}{Q-2}}d\xi\bigg)^{\frac{Q-2}{2Q}}.
\end{align}

\end{lemma}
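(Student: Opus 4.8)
The plan is to reproduce the scheme of the proof of Lemma \ref{lemm2}, the only new feature being the coupled right-hand sides. First I would set $h(s,t):=f(s,t)/(s^{p_1}t^{q_1})$ and $k(s,t):=g(s,t)/(s^{p_2}t^{q_2})$, both positive and well defined on $(0,+\infty)^2$; since $(Q-2)(p_1+q_1)=(Q-2)(p_2+q_2)=Q+2$, the powers of $|\xi|_{\mathbb H^n}$ cancel and \eqref{s3}--\eqref{ss3} take the form
\begin{equation*}
-\Delta_{\mathbb H^n}\bar u=h(|\xi|_{\mathbb H^n}^{Q-2}\bar u,|\xi|_{\mathbb H^n}^{Q-2}\bar v)\,\bar u^{p_1}\bar v^{q_1},\qquad -\Delta_{\mathbb H^n}\bar v=k(|\xi|_{\mathbb H^n}^{Q-2}\bar u,|\xi|_{\mathbb H^n}^{Q-2}\bar v)\,\bar u^{p_2}\bar v^{q_2},
\end{equation*}
with the analogous identities for $(\bar u_\lambda,\bar v_\lambda)$ carrying $|\xi_\lambda|_{\mathbb H^n}$ in place of $|\xi|_{\mathbb H^n}$. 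Fixing $\lambda>0$, I choose $r>0$ with $\Sigma_\lambda\subset\mathbb H^n\setminus B_r(0)$; by Lemma \ref{lemm1} we then have $\bar u,\bar v\in L^{\tau+1}(\Sigma_\lambda)\cap L^\infty(\Sigma_\lambda)$, hence $0\le U_\lambda^+\le\bar u$ and $0\le V_\lambda^+\le\bar v$ lie in the same spaces, $|\xi|_{\mathbb H^n}^{-2Q}$ is integrable over $\Sigma_\lambda$, and the quantities $|\xi|_{\mathbb H^n}^{Q-2}\bar u(\xi)=u(\tilde\xi)$ and $|\xi|_{\mathbb H^n}^{Q-2}\bar v(\xi)=v(\tilde\xi)$ stay in a fixed compact subset of $(0,+\infty)$ for $\xi\in\Sigma_\lambda$ (by \eqref{32q} together with the positivity and continuity of $u,v$); consequently $h$ and $k$ evaluated along the solution are bounded above on $\Sigma_\lambda$ by a constant $C_\lambda$, which is nonincreasing in $\lambda$ exactly as in \eqref{w10}--\eqref{w11}.

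Next I would use $\phi_\epsilon=\eta_\epsilon^2 U_\lambda^+$ as test function for the equation satisfied by $U_\lambda$, with the same cut-off $\eta_\epsilon$ as in Lemma \ref{lemm2}; the computation \eqref{w7} carries over verbatim and gives $\int_{\Sigma_\lambda}|\nabla_{\mathbb H^n}(U_\lambda^+\eta_\epsilon)|^2=\int_{\Sigma_\lambda}(-\Delta_{\mathbb H^n}U_\lambda)\phi_\epsilon+I_\epsilon$ with $I_\epsilon=\int_{\Sigma_\lambda}(U_\lambda^+)^2|\nabla_{\mathbb H^n}\eta_\epsilon|^2$. The crux is then the pointwise bound, on the set $\Sigma_\lambda^u=\{\xi\in\Sigma_\lambda\setminus\{0_\lambda\}:U_\lambda(\xi)>0\}$,
\begin{equation*}
-\Delta_{\mathbb H^n}U_\lambda\;\le\;\frac{C_\lambda}{|\xi|_{\mathbb H^n}^{4}}\big(U_\lambda^+ + V_\lambda^+\big)\qquad\text{on }\Sigma_\lambda^u .
\end{equation*}
To obtain it one first notes that, since $|\xi|_{\mathbb H^n}\ge|\xi_\lambda|_{\mathbb H^n}$ on $\Sigma_\lambda$ and $h$ is nonincreasing in each variable, $-\Delta_{\mathbb H^n}\bar u(\xi)=|\xi|_{\mathbb H^n}^{-(Q+2)}f(|\xi|_{\mathbb H^n}^{Q-2}\bar u,|\xi|_{\mathbb H^n}^{Q-2}\bar v)\le|\xi_\lambda|_{\mathbb H^n}^{-(Q+2)}f(|\xi_\lambda|_{\mathbb H^n}^{Q-2}\bar u,|\xi_\lambda|_{\mathbb H^n}^{Q-2}\bar v)$, which is the slotwise analogue of \eqref{wv8}--\eqref{w8}; subtracting $-\Delta_{\mathbb H^n}\bar u_\lambda(\xi)=|\xi_\lambda|_{\mathbb H^n}^{-(Q+2)}f(|\xi_\lambda|_{\mathbb H^n}^{Q-2}\bar u_\lambda,|\xi_\lambda|_{\mathbb H^n}^{Q-2}\bar v_\lambda)$, whose $f$-arguments now differ from those above by exactly $|\xi_\lambda|_{\mathbb H^n}^{Q-2}U_\lambda$ and $|\xi_\lambda|_{\mathbb H^n}^{Q-2}V_\lambda$, and using that on $\Sigma_\lambda^u$ the point stays away from the singularity $0_\lambda$ so that $U_\lambda,V_\lambda$ and all the arguments involved lie in a compact subset of $(0,+\infty)$, the mean value theorem together with the monotonicity of $f$ in each variable and the boundedness of $h$ yields the displayed estimate (when $p_1=0$ the $u$-slot increment vanishes identically, and no degeneracy occurs in the $v$-slot since $q_1>0$).

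Substituting this bound into the identity, estimating the two resulting terms by Hölder's inequality with exponents $\tfrac{Q}{2},\tfrac{2Q}{Q-2}$ and $\tfrac{Q}{2},\tfrac{2Q}{Q-2},\tfrac{2Q}{Q-2}$ respectively, showing $I_\epsilon\to0$ as $\epsilon\to0$ exactly as in \eqref{w14} (which needs only $U_\lambda^+\in L^{\tau+1}(\Sigma_\lambda)$, already known, and which also gives $U_\lambda^+\in H^1(\Sigma_\lambda)$), invoking the subelliptic Sobolev inequality \eqref{w15} on the left-hand side, and letting $\epsilon\to0$ by monotone and dominated convergence, yields \eqref{p4}. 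Estimate \eqref{p5} follows in the same manner from the equation for $\bar v$, with $k,p_2,q_2,\Sigma_\lambda^v$ replacing $h,p_1,q_1,\Sigma_\lambda^u$ (here it is $p_2>0$ that excludes degeneracy), and the monotonicity of $C_\lambda$ in $\lambda$ is checked as after \eqref{w11}. I expect the displayed pointwise estimate to be the genuine obstacle: unlike in Lemma \ref{lemm2}, where only $V_\lambda$ enters the right-hand side and a single sign split suffices, here the two slots of $f$ (and of $h$) must be handled separately, with the signs of $U_\lambda$ and $V_\lambda$ tracked independently, and the two complementary monotonicity properties — $f$ nondecreasing and $h=f/(s^{p_1}t^{q_1})$ nonincreasing — must be combined so that all residual terms collapse into the weighted $U_\lambda^+$ and $V_\lambda^+$ contributions.
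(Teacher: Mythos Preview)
Your overall scheme is right, but the pointwise estimate for $-\Delta_{\mathbb H^n}U_\lambda$ does not go through as you describe. The first move you make---using that $h$ is nonincreasing to pass from $|\xi|_{\mathbb H^n}$ to $|\xi_\lambda|_{\mathbb H^n}$ in $-\Delta_{\mathbb H^n}\bar u$---is correct as an inequality, but it is the wrong direction to exploit: after that step the prefactor is $|\xi_\lambda|_{\mathbb H^n}^{-(Q+2)}$ and the $f$-arguments are $|\xi_\lambda|_{\mathbb H^n}^{Q-2}\bar u$, $|\xi_\lambda|_{\mathbb H^n}^{Q-2}\bar v$. Since $0_\lambda\in\Sigma_\lambda$ and nothing forces $\Sigma_\lambda^u$ to avoid a neighbourhood of $0_\lambda$, these arguments tend to $0$ as $\xi\to 0_\lambda$ and therefore do \emph{not} lie in a compact subset of $(0,+\infty)$; your assertion to that effect is false. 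Consequently you cannot apply any mean-value or Lipschitz-type bound to $f$ there (recall $f$ is only continuous), and even if you could, the resulting weight would be $|\xi_\lambda|_{\mathbb H^n}^{-4}$ rather than $|\xi|_{\mathbb H^n}^{-4}$; the ensuing H\"older step would then produce $\int_{\Sigma_\lambda^u}|\xi_\lambda|_{\mathbb H^n}^{-2Q}$, which diverges.

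The paper avoids this by working in the opposite direction: instead of bounding $-\Delta_{\mathbb H^n}\bar u$ from above, it bounds $-\Delta_{\mathbb H^n}\bar u_\lambda$ from \emph{below} by a multiple of $|\xi|_{\mathbb H^n}^{-(Q+2)}f(|\xi|_{\mathbb H^n}^{Q-2}\bar u,|\xi|_{\mathbb H^n}^{Q-2}\bar v)$, so that after subtraction only the ``good'' weight $|\xi|_{\mathbb H^n}$ survives. Concretely, one splits into the four sign cases $(U_\lambda\gtrless 0,\,V_\lambda\gtrless 0)$; in each case the monotonicity of $f$ together with the nonincrease of $f(s,t)/(s^{p_1}t^{q_1})$ yields a multiplicative inequality of the type
\[
\frac{f(|\xi_\lambda|_{\mathbb H^n}^{Q-2}\bar u_\lambda,|\xi_\lambda|_{\mathbb H^n}^{Q-2}\bar v_\lambda)}{|\xi_\lambda|_{\mathbb H^n}^{Q+2}}\;\ge\;\frac{f(|\xi|_{\mathbb H^n}^{Q-2}\bar u,|\xi|_{\mathbb H^n}^{Q-2}\bar v)}{|\xi|_{\mathbb H^n}^{Q+2}}\Big(\frac{\bar u_\lambda}{\bar u}\Big)^{p_1}\Big(\frac{\bar v_\lambda}{\bar v}\Big)^{q_1},
\]
and then the mean value theorem is applied not to $f$ but to the explicit power function $r\mapsto r^{\alpha}$ with $\alpha=\tfrac{Q+2}{Q-2}$, via $1-a^{p_1}b^{q_1}\le \tfrac{Q+2}{Q-2}\big((1-a)+(1-b)\big)$ for $0\le a,b\le 1$. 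This produces exactly $C_\lambda|\xi|_{\mathbb H^n}^{-4}(U_\lambda^++V_\lambda^+)$ with $C_\lambda$ controlled through the bounded quantities $|\xi|_{\mathbb H^n}^{Q-2}\bar u,|\xi|_{\mathbb H^n}^{Q-2}\bar v$, after which your remaining steps (H\"older, $I_\epsilon\to 0$, Sobolev, $\epsilon\to 0$) are correct.
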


\begin{proof}
We just prove \eqref{p4}, the proof of \eqref{p5} is similar. 
For any fixed $\lambda>0$, there exists $r>0$ such that $\Sigma_{\lambda}\subset \mathbb{H}^{n}\backslash B_{r}(0)$, then $\bar{u}$ and $U_{\lambda}^{+}\leq\bar{u}\in L^{\tau+1}(\Sigma_{\lambda})\cap L^{\infty}(\Sigma_{\lambda})$ and $\frac{1}{|\xi|_{\mathbb{H}^{n}}}$ is integrable in $\Sigma_{\lambda}$.

Since $\lambda>0$, it follows that $|\xi|_{\mathbb{H}^{n}}>|\xi_{\lambda}|_{\mathbb{H}^{n}}$ for all $\xi\in \Sigma_{\lambda}$, we still choose a cylindrical symmetric cut-off function $\eta_{\epsilon}$ as in Section 3.

(i) If $U_{\lambda}(\xi)\geq0$ and $V_{\lambda}(\xi)\leq0$, then by the assumptions $(i)$ in Theorem \ref{th2}, we have
\begin{equation} \label{ruz1}
    f(|\xi|_{\mathbb{H}^{n}}^{Q-2}\bar{u}(\xi),|\xi|_{\mathbb{H}^{n}}^{Q-2}\bar{v}(\xi))
\geq f\big(|\xi^{\lambda}|_{\mathbb{H}^{n}}^{Q-2}\bar{u}_{\lambda}(\xi),
|\xi_{\lambda}|_{\mathbb{H}^{n}}^{Q-2}\bar{v}(\xi)\frac{\bar{u}_{\lambda}(\xi)}{\bar{u}(\xi)}\big).
\end{equation}

By the assumptions $(ii)$ in Theorem \ref{th2}, we have
\begin{equation} \label{ruz2}
    \frac{f(|\xi_{\lambda}|_{\mathbb{H}^{n}}^{Q-2}\bar{u}_{\lambda}(\xi),
|\xi_{\lambda}|_{\mathbb{H}^{n}}^{Q-2}\bar{v}(\xi)\frac{\bar{u}_{\lambda}(\xi)}{\bar{u}(\xi)})}
{[|\xi_{\lambda}|_{\mathbb{H}^{n}}^{Q-2}\bar{u}_{\lambda}(\xi)]^{p_{1}}
[|\xi_{\lambda}|_{\mathbb{H}^{n}}^{Q-2}\bar{v}(\xi)\frac{\bar{u}_{\lambda}(\xi)}{\bar{u}(\xi)}]^{q_{1}}}\geq
\frac{f(|\xi|_{\mathbb{H}^{n}}^{Q-2}\bar{u}(\xi),|\xi|_{\mathbb{H}^{n}}^{Q-2}\bar{v}(\xi))}
{[|\xi|_{\mathbb{H}^{n}}^{Q-2}\bar{u}(\xi)]^{p_{1}}[|\xi|_{\mathbb{H}^{n}}^{Q-2}\bar{v}(\xi)]^{q_{1}}}.
\end{equation}
By using \eqref{ruz1} in \eqref{ruz2}, we deduce that
$$f(|\xi_{\lambda}|_{\mathbb{H}^{n}}^{Q-2}\bar{u}_{\lambda}(\xi),|\xi_{\lambda}|_{\mathbb{H}^{n}}^{Q-2}\bar{v}_{\lambda}(\xi))
\geq f(|\xi|_{\mathbb{H}^{n}}^{Q-2}\bar{u}(\xi),|\xi|_{\mathbb{H}^{n}}^{Q-2}\bar{v}(\xi))
\left(\frac{|\xi_{\lambda}|_{\mathbb{H}^{n}} }{|\xi|_{\mathbb{H}^{n}}}\right)^{Q+2} \Big(\frac{\bar{u}_{\lambda}(\xi)}{\bar{u}(\xi)}\Big)^{\frac{Q+2}{Q-2}},$$
which further implies that
$$
\begin{aligned}
\frac{1}{|\xi|_{\mathbb{H}^{n}}^{Q+2}}&f(|\xi|_{\mathbb{H}^{n}}^{Q-2}\bar{u}(\xi),|\xi|_{\mathbb{H}^{n}}^{Q-2}\bar{v}(\xi))
-\frac{1}{|\xi_{\lambda}|_{\mathbb{H}^{n}}^{Q+2}}f(|\xi_{\lambda}|_{\mathbb{H}^{n}}^{Q-2}\bar{u}_{\lambda}
(\xi),|\xi_{\lambda}|_{\mathbb{H}^{n}}^{Q-2}\bar{v}_{\lambda}(\xi))\\
&\leq\frac{1}{|\xi|_{\mathbb{H}^{n}}^{Q+2}}f(|\xi|_{\mathbb{H}^{n}}^{Q-2}\bar{u}(\xi),|\xi|_{\mathbb{H}^{n}}^{Q-2}\bar{v}(\xi))
\bigg(1-\Big(\frac{\bar{u}_{\lambda}(\xi)}{\bar{u}(\xi)}\Big)^{\frac{Q+2}{Q-2}}\bigg)\\
&\leq\frac{1}{|\xi|_{\mathbb{H}^{n}}^{Q+2}}f(|\xi|_{\mathbb{H}^{n}}^{Q-2}\bar{u}(\xi),|\xi|_{\mathbb{H}^{n}}^{Q-2}\bar{v}(\xi))\frac{Q+2}{Q-2}
\bigg(1-\frac{\bar{u}_{\lambda}(\xi)}{\bar{u}(\xi)}\bigg)\\
&\leq\frac{C}{|\xi|_{\mathbb{H}^{n}}^{Q+2}}\frac{f(|\xi|_{\mathbb{H}^{n}}^{Q-2}\bar{u}(\xi),|\xi|_{\mathbb{H}^{n}}^{Q-2}\bar{v}(\xi))}{\bar{u}(\xi)}
(\bar{u}(\xi)-\bar{u}_{\lambda}(\xi))\\
&\leq\frac{C}{|\xi|_{\mathbb{H}^{n}}^{4}}\frac{f(|\xi|_{\mathbb{H}^{n}}^{Q-2}\bar{u}(\xi),|\xi|_{\mathbb{H}^{n}}^{Q-2}\bar{v}(\xi))}
{|\xi|_{\mathbb{H}^{n}}^{Q-2}\bar{u}(\xi)}
(\bar{u}(\xi)-\bar{u}_{\lambda}(\xi))
\\
&\leq\frac{C}{|\xi|_{\mathbb{H}^{n}}^{4}}\frac{f(|\xi|_{\mathbb{H}^{n}}^{Q-2}\bar{u}(\xi),|\xi|_{\mathbb{H}^{n}}^{Q-2}\bar{v}(\xi))}
{|\xi|_{\mathbb{H}^{n}}^{Q-2}\bar{u}(\xi) (|\xi|_{\mathbb{H}^{n}}^{Q-2}\bar{v}(\xi))^{\frac{4}{Q-2}}} (|\xi|_{\mathbb{H}^{n}}^{Q-2}\bar{v}(\xi))^{\frac{4}{Q-2}}
(\bar{u}(\xi)-\bar{u}_{\lambda}(\xi))\\
&\leq C\frac{f(|\xi|_{\mathbb{H}^{n}}^{Q-2}\bar{u}(\xi),|\xi|_{\mathbb{H}^{n}}^{Q-2}\bar{v}(\xi))}
{|\xi|_{\mathbb{H}^{n}}^{Q-2}\bar{u}(\xi) (|\xi|_{\mathbb{H}^{n}}^{Q-2}\bar{v}(\xi))^{\frac{4}{Q-2}}} (\bar{v}(\xi))^{\frac{4}{Q-2}}
(\bar{u}(\xi)-\bar{u}_{\lambda}(\xi))\\
&\leq\frac{C_{\lambda}}{|\xi|_{\mathbb{H}^{n}}^{4}}(\bar{u}(\xi)-\bar{u}_{\lambda}(\xi)),
\end{aligned}
$$
where we have used   the mean value theorem with the observation that $\frac{\bar{u}_{\lambda}(\xi)}{\bar{u}(\xi)} \leq 1$ in the second inequality and the fact that $f$ is a positive continuous function such that $\frac{f(s, t)}{s\,\, t^{\frac{4}{Q-2}}}$ is nonincreasing in $s$ (assumption $(ii)$) along with  the $\bar{v}(\xi)$ decay as $\frac{1}{|\xi|_{\mathbb{H}^{n}}^{Q-2}}$  to deduce the last inequality.

$(ii)$ If $U_{\lambda}(\xi)\geq0$ and $V_{\lambda}(\xi)>0$, then by arguing similar to $(i)$ we have
$$\frac{f(|\xi_{\lambda}|_{\mathbb{H}^{n}}^{Q-2}\bar{u}_{\lambda}(\xi),|\xi_{\lambda}|_{\mathbb{H}^{n}}^{Q-2}\bar{v}_{\lambda}(\xi))}
{|\xi_{\lambda}|_{\mathbb{H}^{n}}^{Q+2}\bar{u}_{\lambda}^{p_{1}}(\xi)\bar{v}_{\lambda}^{q_{1}}(\xi)}
\geq\frac{f(|\xi|_{\mathbb{H}^{n}}^{Q-2}\bar{u}(\xi),|\xi|_{\mathbb{H}^{n}}^{Q-2}\bar{v}(\xi))}
{|\xi|_{\mathbb{H}^{n}}^{Q+2}\bar{u}^{p_{1}}(\xi)\bar{v}^{q_{1}}(\xi)},$$
that is
$$\frac{f(|\xi_{\lambda}|_{\mathbb{H}^{n}}^{Q-2}\bar{u}_{\lambda}(\xi),|\xi^{\lambda}|_{\mathbb{H}^{n}}^{Q-2}\bar{v}_{\lambda}(\xi))}
{|\xi_{\lambda}|_{\mathbb{H}^{n}}^{Q+2}}
\geq\frac{f(|\xi|_{\mathbb{H}^{n}}^{Q-2}\bar{u}(\xi),|\xi|_{\mathbb{H}^{n}}^{Q-2}\bar{v}(\xi))}
{|\xi|_{\mathbb{H}^{n}}^{Q+2}}\bigg(\frac{\bar{u}_{\lambda}(\xi)}{\bar{u}(\xi)}\bigg)^{p_{1}}\bigg(\frac{\bar{v}_{\lambda}(\xi)}{\bar{v}(\xi)}\bigg)^{q_{1}}.$$
So we have
$$
\begin{aligned}
\frac{f(|\xi|_{\mathbb{H}^{n}}^{Q-2}\bar{u}(\xi),|\xi|_{\mathbb{H}^{n}}^{Q-2}\bar{v}(\xi))}
{|\xi|_{\mathbb{H}^{n}}^{Q+2}}-&\frac{f(|\xi_{\lambda}|_{\mathbb{H}^{n}}^{Q-2}
\bar{u}_{\lambda}(\xi),|\xi_{\lambda}|_{\mathbb{H}^{n}}^{Q-2}\bar{v}_{\lambda}(\xi))}
{|\xi_{\lambda}|_{\mathbb{H}^{n}}^{Q+2}}\\
\leq&\frac{f(|\xi|_{\mathbb{H}^{n}}^{Q-2}\bar{u}(\xi),|\xi|_{\mathbb{H}^{n}}^{Q-2}\bar{v}(\xi))}
{|\xi|_{\mathbb{H}^{n}}^{Q+2}}\bigg(1-\Big(\frac{\bar{u}_{\lambda}(\xi)}{\bar{u}(\xi)}\Big)^{p_{1}}\Big(\frac{\bar{v}_{\lambda}(\xi)}{\bar{v}(\xi)}\Big)^{q_{1}}\bigg)\\
\leq&\frac{f(|\xi|_{\mathbb{H}^{n}}^{Q-2}\bar{u}(\xi),|\xi|_{\mathbb{H}^{n}}^{Q-2}\bar{v}(\xi))}
{|\xi|_{\mathbb{H}^{n}}^{Q+2}}
\bigg(1-\Big(\frac{\bar{u}_{\lambda}(\xi)}{\bar{u}(\xi)}\Big)^{\frac{Q+2}{Q-2}}\Big(\frac{\bar{v}_{\lambda}(\xi)}{\bar{v}(\xi)}\Big)^{\frac{Q+2}{Q-2}}\bigg)\\
\leq&\frac{f(|\xi|_{\mathbb{H}^{n}}^{Q-2}\bar{u}(\xi),|\xi|_{\mathbb{H}^{n}}^{Q-2}\bar{v}(\xi))}
{|\xi|_{\mathbb{H}^{n}}^{Q+2}}\frac{Q+2}{Q-2}
\bigg(\Big(1-\frac{\bar{u}_{\lambda}(\xi)}{\bar{u}(\xi)}\Big)+\Big(1-\frac{\bar{v}_{\lambda}(\xi)}{\bar{v}(\xi)}\Big)\bigg)\\
\leq&\frac{C}{|\xi|_{\mathbb{H}^{n}}^{Q+2}}
\bigg(\frac{f(|\xi|_{\mathbb{H}^{n}}^{Q-2}\bar{u}(\xi),|\xi|_{\mathbb{H}^{n}}^{Q-2}\bar{v}(\xi))}{\bar{u}(\xi)}\big(\bar{u}(\xi)-\bar{u}_{\lambda}(\xi)\big)\\&+
\frac{f(|\xi|_{\mathbb{H}^{n}}^{Q-2}\bar{u}(\xi),|\xi|_{\mathbb{H}^{n}}^{Q-2}\bar{v}(\xi))}{\bar{v}(\xi)}\big(\bar{v}(\xi)-\bar{v}_{\lambda}(\xi)\big)\bigg)\\
\leq&\frac{C_{\lambda}}{|\xi|_{\mathbb{H}^{n}}^{4}}\bigg(\big(\bar{u}(\xi)-\bar{u}_{\lambda}(\xi)\big)+\big(\bar{v}(\xi)-\bar{v}_{\lambda}(\xi)\big)\bigg).
\end{aligned}
$$

$(iii)$ If $U_{\lambda}(\xi)<0$ and $V_{\lambda}(\xi)\geq0$, then we have change the role of $\bar{u},\bar{v}$ in case $(i)$ and obtain
$$
\begin{aligned}
\frac{1}{|\xi|_{\mathbb{H}^{n}}^{Q+2}}f(|\xi|_{\mathbb{H}^{n}}^{Q-2}\bar{u}(\xi),|\xi|_{\mathbb{H}^{n}}^{Q-2}\bar{v}(\xi))
&-\frac{1}{|\xi_{\lambda}|_{\mathbb{H}^{n}}^{Q+2}}f(|\xi_{\lambda}|_{\mathbb{H}^{n}}^{Q-2}\bar{u}_{\lambda}
(\xi),|\xi^{\lambda}|_{\mathbb{H}^{n}}^{Q-2}\bar{v}_{\lambda}(\xi))\\
&\leq\frac{C_{\lambda}}{|\xi|_{\mathbb{H}^{n}}^{4}}(\bar{v}(\xi)-\bar{v}_{\lambda}(\xi)).
\end{aligned}
$$

$(iv)$ If $U_{\lambda}(\xi)<0$ and $V_{\lambda}(\xi)<0$, then we have
$$
\begin{aligned}
\frac{f(|\xi|_{\mathbb{H}^{n}}^{Q-2}\bar{u}(\xi),|\xi|_{\mathbb{H}^{n}}^{Q-2}\bar{v}(\xi))}
{|\xi|_{\mathbb{H}^{n}}^{Q+2}}&\leq\frac{f(|\xi_{\lambda}|_{\mathbb{H}^{n}}^{Q-2}
\bar{u}_{\lambda}(\xi),|\xi_{\lambda}|_{\mathbb{H}^{n}}^{Q-2}\bar{v}_{\lambda}(\xi))}
{|\xi^{\lambda}|_{\mathbb{H}^{n}}^{Q+2}}\\
&=\frac{f(|\xi|_{\mathbb{H}^{n}}^{Q-2}\bar{u}(\xi),|\xi|_{\mathbb{H}^{n}}^{Q-2}\bar{v}(\xi))}
{[|\xi|_{\mathbb{H}^{n}}^{Q-2}\bar{u}_{\lambda}(\xi)]^{p_{1}}[|\xi|_{\mathbb{H}^{n}}^{Q-2}\bar{v}_{\lambda}(\xi)]^{q_{1}}}
\bar{u}_{\lambda}^{p_{1}}(\xi)\bar{v}_{\lambda}^{q_{1}}(\xi)\\
&\leq\frac{f(|\xi_{\lambda}|_{\mathbb{H}^{n}}^{Q-2}\bar{u}(\xi),|\xi_{\lambda}|_{\mathbb{H}^{n}}^{Q-2}\bar{v}(\xi))}
{[|\xi_{\lambda}|_{\mathbb{H}^{n}}^{Q-2}\bar{u}_{\lambda}(\xi)]^{p_{1}}[|\xi_{\lambda}|_{\mathbb{H}^{n}}^{Q-2}\bar{v}_{\lambda}(\xi)]^{q_{1}}}
\bar{u}_{\lambda}^{p_{1}}(\xi)\bar{v}_{\lambda}^{q_{1}}(\xi)\\
&\leq\frac{f(|\xi_{\lambda}|_{\mathbb{H}^{n}}^{Q-2}\bar{u}_{\lambda}(\xi),|\xi_{\lambda}|_{\mathbb{H}^{n}}^{Q-2}\bar{v}_{\lambda}(\xi))}
{|\xi_{\lambda}|_{\mathbb{H}^{n}}^{Q+2}}.
\end{aligned}
$$
The rest of calculation follows exactly similar to case $(ii),$ therefore we skip it.

Therefore,  we deduce from \eqref{s3} and \eqref{ss3} by using cases $(i)$-$(iv)$ that
\begin{equation}\label{i1}
-\Delta_{\mathbb{H}^{n}}U_{\lambda}\leq \frac{C_{\lambda}}{|\xi|_{\mathbb{H}^{n}}^{4}}\bigg(\big(\bar{u}(\xi)-\bar{u}_{\lambda}(\xi)\big)^{+}+\big(\bar{v}(\xi)-\bar{v}_{\lambda}(\xi)\big)^{+}\bigg).
\end{equation}
Similarly, we obtain
\begin{equation}\label{i2}
-\Delta_{\mathbb{H}^{n}}V_{\lambda}\leq \frac{C_{\lambda}}{|\xi|_{\mathbb{H}^{n}}^{4}}\bigg(\big(\bar{u}(\xi)-\bar{u}_{\lambda}(\xi)\big)^{+}+\big(\bar{v}(\xi)-\bar{v}_{\lambda}(\xi)\big)^{+}\bigg).
\end{equation}
Hence,  after a calculation similar to that of \eqref{w7}, we conclude that

\begin{equation}\label{z5}
\begin{aligned}
\int_{\Sigma_{\lambda}}|\nabla_{\mathbb{H}^{n}}(U_{\lambda}^{+}\eta_{\epsilon})|^{2}d\xi
=&\int_{\Sigma_{\lambda}}-\Delta_{\mathbb{H}^{n}}(U_{\lambda}^{+}\eta_{\epsilon}^{2}U_{\lambda}^{+} )d\xi+I_{\epsilon}\\
\leq&\int_{\Sigma_{\lambda}}\frac{C_{\lambda}}{|\xi|_{\mathbb{H}^{n}}^{4}}(U_{\lambda}^{+}+V_{\lambda}^{+})\eta_{\epsilon}^{2}U_{\lambda}^{+} d\xi
+I_{\epsilon},
\end{aligned}
\end{equation}
where $I_{\epsilon}=\int_{\Sigma_{\lambda}^{u}}(U_{\lambda}^{+})^{2}|\nabla_{\mathbb{H}^{n}}\eta_{\epsilon}|^{2}d\xi$. We can also prove that $I_{\epsilon}\rightarrow0$ as $\epsilon\rightarrow0$ as in \eqref{w14}.

By the H\"older inequality, we obtain

\begin{equation}\label{z6}
\begin{aligned}
\int_{\Sigma_{\lambda}}\frac{1}{|\xi|_{\mathbb{H}^{n}}^{4}}(U_{\lambda}^{+}+V_{\lambda}^{+})&\eta_{\epsilon}^{2}U_{\lambda}^{+} d\xi\leq C_{\lambda}\bigg(\int_{\Sigma_{\lambda}^{u}}\frac{1}{|\xi|_{\mathbb{H}^{n}}^{2Q}}d\xi\bigg)^{\frac{2}{Q}}
\bigg(\int_{\Sigma_{\lambda}}(U_{\lambda}^{+} \eta_{\epsilon})^{\frac{2Q}{Q-2}}d\xi\bigg)^{\frac{Q-2}{Q}}\\
&+C_{\lambda}\bigg(\int_{\Sigma_{\lambda}^{u}}\frac{1}{|\xi|_{\mathbb{H}^{n}}^{2Q}}d\xi\bigg)^{\frac{2}{Q}}
(\int_{\Sigma_{\lambda}}(V_{\lambda}^{+} \eta_{\epsilon})^{\frac{2Q}{Q-2}}d\xi\big)^{\frac{Q-2}{2Q}}
\bigg(\int_{\Sigma_{\lambda}}(U_{\lambda}^{+} \eta_{\epsilon})^{\frac{2Q}{Q-2}}d\xi\bigg)^{\frac{Q-2}{2Q}}.
\end{aligned}
\end{equation}

Now, combining \eqref{z5} with \eqref{z6} and letting $\epsilon\rightarrow0$ in \eqref{z5}, we get
\begin{equation}\label{i3}
\begin{aligned}
\int_{\Sigma_{\lambda}}|\nabla_{\mathbb{H}^{n}}U_{\lambda}^{+}|^{2}d\xi\leq& C_{\lambda}\bigg(\int_{\Sigma_{\lambda}^{u}}\frac{1}{|\xi|_{\mathbb{H}^{n}}^{2Q}}d\xi\bigg)^{\frac{2}{Q}}
\bigg(\int_{\Sigma_{\lambda}}(U_{\lambda}^{+} )^{\frac{2Q}{Q-2}}d\xi\bigg)^{\frac{Q-2}{Q}}\\
&+C_{\lambda}\bigg(\int_{\Sigma_{\lambda}^{u}}\frac{1}{|\xi|_{\mathbb{H}^{n}}^{2Q}}d\xi\bigg)^{\frac{2}{Q}}
(\int_{\Sigma_{\lambda}}(V_{\lambda}^{+} )^{\frac{2Q}{Q-2}}d\xi\big)^{\frac{Q-2}{2Q}}
\bigg(\int_{\Sigma_{\lambda}}(U_{\lambda}^{+} )^{\frac{2Q}{Q-2}}d\xi\bigg)^{\frac{Q-2}{2Q}}.
\end{aligned}
\end{equation}

Similarly, we have
\begin{equation}\label{i4}
\begin{aligned}
\int_{\Sigma_{\lambda}}|\nabla_{\mathbb{H}^{n}}V_{\lambda}^{+}|^{2}d\xi\leq& C_{\lambda}\bigg(\int_{\Sigma_{\lambda}^{v}}\frac{1}{|\xi|_{\mathbb{H}^{n}}^{2Q}}d\xi\bigg)^{\frac{2}{Q}}
\bigg(\int_{\Sigma_{\lambda}}(V_{\lambda}^{+} )^{\frac{2Q}{Q-2}}d\xi\bigg)^{\frac{Q-2}{Q}}\\
&+C_{\lambda}\bigg(\int_{\Sigma_{\lambda}^{v}}\frac{1}{|\xi|_{\mathbb{H}^{n}}^{2Q}}d\xi\bigg)^{\frac{2}{Q}}
(\int_{\Sigma_{\lambda}}(U_{\lambda}^{+} )^{\frac{2Q}{Q-2}}d\xi\big)^{\frac{Q-2}{2Q}}
\bigg(\int_{\Sigma_{\lambda}}(V_{\lambda}^{+} )^{\frac{2Q}{Q-2}}d\xi\bigg)^{\frac{Q-2}{2Q}}.
\end{aligned}
\end{equation}

This completes the proof of Lemma \ref{em1}.
\end{proof}

\begin{lemma} \label{em2}
Under the assumptions of Theorem \ref{th2}, there exists $\lambda_{o}>0$ such that for all $\lambda\geq\lambda_{o}$, $U_{\lambda}\leq0$ and $V_{\lambda}\leq0$ for all $\xi\in \Sigma_{\lambda}\backslash \{0_{\lambda}\}$.
\end{lemma}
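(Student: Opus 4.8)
The plan is to mimic the proof of Lemma \ref{lemm3}, using the integral inequalities \eqref{p4} and \eqref{p5} of Lemma \ref{em1} in place of \eqref{w5} and \eqref{w6}. First I would note that, for any fixed $\lambda>0$, $\Sigma_{\lambda}\subset\mathbb{H}^{n}\backslash B_{r}(0)$ for a suitable $r=r(\lambda)>0$, so that $\frac{1}{|\xi|_{\mathbb{H}^{n}}^{2Q}}$ is integrable over $\Sigma_{\lambda}$ and
$$
\int_{\Sigma_{\lambda}^{u}}\frac{1}{|\xi|_{\mathbb{H}^{n}}^{2Q}}\,d\xi\leq\int_{\Sigma_{\lambda}}\frac{1}{|\xi|_{\mathbb{H}^{n}}^{2Q}}\,d\xi\longrightarrow0,\qquad\text{as }\lambda\rightarrow+\infty,
$$
and likewise for $\Sigma_{\lambda}^{v}$. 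Hence there is $\lambda_{o}>0$ such that for all $\lambda\geq\lambda_{o}$ one has
$$
C_{\lambda}\bigg(\int_{\Sigma_{\lambda}^{u}}\frac{1}{|\xi|_{\mathbb{H}^{n}}^{2Q}}\,d\xi\bigg)^{\frac{2}{Q}}\leq\frac12,\qquad C_{\lambda}\bigg(\int_{\Sigma_{\lambda}^{v}}\frac{1}{|\xi|_{\mathbb{H}^{n}}^{2Q}}\,d\xi\bigg)^{\frac{2}{Q}}\leq\frac12,
$$
using that $C_{\lambda}$ is nonincreasing in $\lambda$ (Lemma \ref{em1}).

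Next I would abbreviate $A:=\big(\int_{\Sigma_{\lambda}}(U_{\lambda}^{+})^{\frac{2Q}{Q-2}}\big)^{\frac{Q-2}{2Q}}$ and $B:=\big(\int_{\Sigma_{\lambda}}(V_{\lambda}^{+})^{\frac{2Q}{Q-2}}\big)^{\frac{Q-2}{2Q}}$, both finite by Lemma \ref{em1}. With this notation, \eqref{p4} and \eqref{p5} read $A^{2}\leq\tfrac12 A^{2}+\tfrac12 AB$ and $B^{2}\leq\tfrac12 B^{2}+\tfrac12 AB$ for $\lambda\geq\lambda_{o}$, which give $A^{2}\leq AB$ and $B^{2}\leq AB$, i.e. $A\leq B$ and $B\leq A$, whence $A=B$. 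Feeding this back, $A^{2}\leq\tfrac12A^{2}+\tfrac12A^{2}=A^{2}$ is not yet a contradiction, so I would instead tighten the constants: choose $\lambda_{o}$ so that each bracketed term is $\leq\tfrac14$ rather than $\tfrac12$. Then \eqref{p4} gives $A^{2}\leq\tfrac14A^{2}+\tfrac14AB\leq\tfrac14A^{2}+\tfrac18(A^{2}+B^{2})$ and symmetrically for $B$; adding the two inequalities yields $A^{2}+B^{2}\leq\tfrac34(A^{2}+B^{2})$, forcing $A=B=0$. Therefore $U_{\lambda}^{+}\equiv0$ and $V_{\lambda}^{+}\equiv0$ on $\Sigma_{\lambda}$, i.e. $U_{\lambda}\leq0$ and $V_{\lambda}\leq0$ on $\Sigma_{\lambda}\backslash\{0_{\lambda}\}$, for all $\lambda\geq\lambda_{o}$.

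I expect the only delicate point to be bookkeeping the coupling between the two inequalities: unlike the single-equation estimate of Lemma \ref{lemm3}, here each of \eqref{p4}, \eqref{p5} contains a self-term $\big(\int(U_{\lambda}^{+})^{\frac{2Q}{Q-2}}\big)^{\frac{Q-2}{Q}}$ as well as the mixed term, so one must absorb the self-term on the left and then combine the two estimates (via Young's inequality $AB\leq\tfrac12(A^{2}+B^{2})$ and summation) to close the argument. Once the smallness threshold for $\lambda_{o}$ is fixed accordingly, the conclusion follows exactly as in Lemma \ref{lemm3}. This proves Lemma \ref{em2}.
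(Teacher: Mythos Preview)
Your proposal is correct and follows the approach the paper indicates (the paper simply writes ``This proof of Lemma \ref{em2} is similar to Lemma \ref{lemm3}, therefore, we skip it''); in fact you supply the extra bookkeeping the paper omits, namely absorbing the self-term in \eqref{p4}--\eqref{p5} before closing the coupled system. One trivial arithmetic slip: after adding the two inequalities with the $\tfrac14$ threshold and Young's inequality you obtain $A^{2}+B^{2}\leq\tfrac12(A^{2}+B^{2})$, not $\tfrac34$, but of course the conclusion $A=B=0$ is the same.
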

\begin{proof}
  This proof of Lemma \ref{em2} is similar to Lemma \ref{lemm3}, therefore, we skip it.  
\end{proof}

Now we can move the plane from the right to the left. More precisely, we define
$$\lambda_{1}=\inf\{\lambda>0\mid U_{\mu}(\xi)\leq0,\ V_{\mu}(\xi)\leq0,\ \xi\in\Sigma_{\mu}\backslash \{0_{\mu}\},\ \mu\geq\lambda\},$$
then we have the following lemma.

\begin{lemma} \label{le2}
If $\lambda_{1}>0$, then $U_{\lambda_{1}}(\xi)\equiv0$ and $V_{\lambda_{1}}(\xi)\equiv0$ for all $\xi\in \Sigma_{\lambda_{1}}\backslash \{0_{\lambda_{1}}\}$.
\end{lemma}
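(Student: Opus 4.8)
The plan is to follow the scheme of Lemma~\ref{lemm4}, with the differential inequalities \eqref{i1}--\eqref{i2} and the integral estimates \eqref{p4}--\eqref{p5} of Lemma~\ref{em1} playing the roles of \eqref{w8} and \eqref{w5}--\eqref{w6}. Since $\lambda_1$ is well defined by Lemma~\ref{em2}, I first note that continuity of $\lambda\mapsto U_\lambda,V_\lambda$ together with the definition of $\lambda_1$ as an infimum gives $U_{\lambda_1}\le 0$ and $V_{\lambda_1}\le 0$ on $\Sigma_{\lambda_1}\setminus\{0_{\lambda_1}\}$. As then $U_{\lambda_1}^+=V_{\lambda_1}^+=0$ there, the right-hand sides of \eqref{i1} and \eqref{i2} vanish, so $-\Delta_{\mathbb{H}^n}U_{\lambda_1}\le 0$ and $-\Delta_{\mathbb{H}^n}V_{\lambda_1}\le 0$ on the connected open set $\Sigma_{\lambda_1}\setminus\{0_{\lambda_1}\}$; by the strong maximum principle for the sub-Laplacian (\cite{bony}) each of $U_{\lambda_1},V_{\lambda_1}$ is therefore \emph{either} identically $0$ \emph{or} strictly negative throughout $\Sigma_{\lambda_1}\setminus\{0_{\lambda_1}\}$.

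Assume, for contradiction, that $(U_{\lambda_1},V_{\lambda_1})\not\equiv(0,0)$. \emph{Case 1: both $U_{\lambda_1}<0$ and $V_{\lambda_1}<0$ strictly.} Then $\{U_{\lambda_1}>0\}$ and $\{V_{\lambda_1}>0\}$ are empty; since $\Sigma_\lambda^u\subseteq\Sigma_\lambda\cap\{U_\lambda>0\}$ and $\Sigma_\lambda^v\subseteq\Sigma_\lambda\cap\{V_\lambda>0\}$, the functions $\chi_{\Sigma_\lambda^u},\chi_{\Sigma_\lambda^v}$ converge to $0$ a.e.\ on $\Sigma_{\lambda_1}$ as $\lambda\nearrow\lambda_1$, while the extra slab $\Sigma_\lambda\setminus\Sigma_{\lambda_1}$ has $|\xi|_{\mathbb{H}^n}^{-2Q}$-measure tending to $0$; as $|\xi|_{\mathbb{H}^n}^{-2Q}$ is integrable on $\Sigma_{\lambda_1-\delta}$ (for $\delta<\lambda_1$), dominated convergence yields
\[
\int_{\Sigma_\lambda^u}\frac{d\xi}{|\xi|_{\mathbb{H}^n}^{2Q}}\longrightarrow 0,\qquad
\int_{\Sigma_\lambda^v}\frac{d\xi}{|\xi|_{\mathbb{H}^n}^{2Q}}\longrightarrow 0 \qquad (\lambda\nearrow\lambda_1).
\]
Because $C_\lambda$ is nonincreasing in $\lambda$, hence bounded on $(\lambda_1-\delta,\lambda_1]$, we may shrink $\delta>0$ so that $C_\lambda\bigl[(\int_{\Sigma_\lambda^u}|\xi|_{\mathbb{H}^n}^{-2Q}d\xi)^{2/Q}+(\int_{\Sigma_\lambda^v}|\xi|_{\mathbb{H}^n}^{-2Q}d\xi)^{2/Q}\bigr]<\tfrac12$ for $\lambda\in(\lambda_1-\delta,\lambda_1)$. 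Writing $a:=(\int_{\Sigma_\lambda}(U_\lambda^+)^{2Q/(Q-2)}d\xi)^{(Q-2)/2Q}$ and $b:=(\int_{\Sigma_\lambda}(V_\lambda^+)^{2Q/(Q-2)}d\xi)^{(Q-2)/2Q}$, inequalities \eqref{p4}--\eqref{p5} read $a^2\le C_\lambda(\int_{\Sigma_\lambda^u}|\xi|_{\mathbb{H}^n}^{-2Q}d\xi)^{2/Q}a(a+b)$ and $b^2\le C_\lambda(\int_{\Sigma_\lambda^v}|\xi|_{\mathbb{H}^n}^{-2Q}d\xi)^{2/Q}b(a+b)$; adding the resulting bounds $a\le C_\lambda(\int_{\Sigma_\lambda^u}|\xi|_{\mathbb{H}^n}^{-2Q}d\xi)^{2/Q}(a+b)$ and $b\le C_\lambda(\int_{\Sigma_\lambda^v}|\xi|_{\mathbb{H}^n}^{-2Q}d\xi)^{2/Q}(a+b)$ gives $a+b\le\tfrac12(a+b)$, whence $a=b=0$. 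Thus $U_\lambda^+\equiv V_\lambda^+\equiv 0$, i.e.\ $U_\lambda\le 0$ and $V_\lambda\le 0$ on $\Sigma_\lambda\setminus\{0_\lambda\}$ for every $\lambda\in(\lambda_1-\delta,\lambda_1]$, contradicting the definition of $\lambda_1$.

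\emph{Case 2 (the main obstacle): the mixed case}, where exactly one of $U_{\lambda_1},V_{\lambda_1}$ vanishes identically and the other is strictly negative; say $U_{\lambda_1}\equiv 0$ and $V_{\lambda_1}<0$ (the case $V_{\lambda_1}\equiv 0$, $U_{\lambda_1}<0$ being symmetric, with $f,p_1,q_1$ replaced by $g,p_2,q_2$ and the use of $q_1>0$ replaced by $p_2>0$). The argument of Case 1 breaks down here because $\chi_{\Sigma_\lambda^u}$ need not tend to $0$ when $U_{\lambda_1}\equiv 0$. Instead I would exploit that $U_{\lambda_1}\equiv 0$ means $\bar u\equiv\bar u_{\lambda_1}$: inserting this into \eqref{s3} and \eqref{ss3} gives $-\Delta_{\mathbb{H}^n}U_{\lambda_1}=0$ and the pointwise relation
\[
\frac{f(|\xi|_{\mathbb{H}^n}^{Q-2}\bar u,\,|\xi|_{\mathbb{H}^n}^{Q-2}\bar v)}{|\xi|_{\mathbb{H}^n}^{Q+2}}
=\frac{f(|\xi_{\lambda_1}|_{\mathbb{H}^n}^{Q-2}\bar u,\,|\xi_{\lambda_1}|_{\mathbb{H}^n}^{Q-2}\bar v_{\lambda_1})}{|\xi_{\lambda_1}|_{\mathbb{H}^n}^{Q+2}},\qquad \xi\in\Sigma_{\lambda_1}\setminus\{0_{\lambda_1}\}.
\]
Since $U_{\lambda_1}=0\ge 0$ and $V_{\lambda_1}\le 0$, the chain of inequalities of case $(i)$ in the proof of Lemma~\ref{em1} applies, and in view of the relation above every step of it must be an equality. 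Exploiting $|\xi_{\lambda_1}|_{\mathbb{H}^n}<|\xi|_{\mathbb{H}^n}$ on $\Sigma_{\lambda_1}$, the strict inequality $\bar v<\bar v_{\lambda_1}$, the monotonicity $(i)$, and the homogeneity--monotonicity $(ii)$ with $p_1+q_1=\frac{Q+2}{Q-2}$ and $q_1>0$, the saturation of this chain forces $f(s,t)/(s^{p_1}t^{q_1})$ to be constant along the pertinent rays and $f(s,\cdot)$ to be constant on the pertinent intervals, so that $f$ reduces to a constant multiple $c\,s^{p_1}t^{q_1}$ on the range of arguments $(|\xi|_{\mathbb{H}^n}^{Q-2}\bar u,|\xi|_{\mathbb{H}^n}^{Q-2}\bar v)$ occurring in \eqref{s3}--\eqref{ss3}. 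But then \eqref{s3}--\eqref{ss3} become $-\Delta_{\mathbb{H}^n}\bar u=c\,\bar u^{p_1}\bar v^{q_1}$ and $-\Delta_{\mathbb{H}^n}\bar u_{\lambda_1}=c\,\bar u_{\lambda_1}^{p_1}\bar v_{\lambda_1}^{q_1}$; subtracting and using $\bar u\equiv\bar u_{\lambda_1}$, $c>0$, $\bar u>0$, $p_1\ge 0$, $q_1>0$ we obtain $\bar v\equiv\bar v_{\lambda_1}$ on $\Sigma_{\lambda_1}$, contradicting $V_{\lambda_1}<0$. Hence Case 2 cannot occur either, and the only remaining possibility is $U_{\lambda_1}\equiv V_{\lambda_1}\equiv 0$, as claimed.
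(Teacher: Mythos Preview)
Your proof follows the same overall scheme as the paper's: continuity gives $U_{\lambda_1},V_{\lambda_1}\le 0$; a strong maximum principle establishes the dichotomy (each is $\equiv 0$ or strictly negative); the both-negative case is ruled out via the integral estimates \eqref{p4}--\eqref{p5} combined with dominated convergence, contradicting the minimality of $\lambda_1$; and the mixed case must be excluded separately. Your treatment of the dichotomy via \eqref{i1}--\eqref{i2} with the positive parts vanishing is in fact cleaner than the paper's localized maximum-principle argument \eqref{r1}.

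The only substantive difference is in the mixed case. The paper argues directly: assuming $U_{\lambda_1}\equiv 0$ it first derives $|\xi|_{\mathbb{H}^n}^{Q-2}\bar v>|\xi_{\lambda_1}|_{\mathbb{H}^n}^{Q-2}\bar v_{\lambda_1}$, then obtains the scaling identity \eqref{zr}--\eqref{i6}, and finally combines these with the monotonicity of $f/(s^{p_1}t^{q_1})$ to conclude \eqref{i7}; comparing \eqref{zr} with \eqref{i7} yields $\bar v^{\,q_1}=\bar v_{\lambda_1}^{\,q_1}$, hence $V_{\lambda_1}\equiv 0$ since $q_1>0$. Your route instead passes through the intermediate claim that $f$ collapses to a single multiple $c\,s^{p_1}t^{q_1}$ on the full range of arguments. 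That step is sketchier than what you need: the saturation of the case-$(i)$ chain at $\bar u=\bar u_{\lambda_1}$ gives constancy of $f/(s^{p_1}t^{q_1})$ along each ray through $(\bar u(\xi),\bar v(\xi))$ and constancy of $f(s_0,\cdot)$ on each interval $[|\xi_{\lambda_1}|^{Q-2}\bar v,\,|\xi_{\lambda_1}|^{Q-2}\bar v_{\lambda_1}]$, but gluing these local facts into the global statement ``$f=c\,s^{p_1}t^{q_1}$ with one constant $c$ over the whole range'' (including the reflected arguments needed in your subtraction step) requires additional justification you have not supplied. The paper's direct comparison avoids this detour entirely and delivers $\bar v=\bar v_{\lambda_1}$ without any structural claim about $f$.
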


\begin{proof}
By continuity, we deduce that $U_{\lambda_{1}}(\xi)\leq0$, $V_{\lambda_{1}}(\xi)\leq0$ for $\xi\in \Sigma_{\lambda_{1}}\backslash \{0_{\lambda_{1}}\}$.
Next, we will prove that if $U_{\lambda_{1}}=0$ at some point $\hat{\xi}\in \Sigma_{\lambda_{1}}\backslash \{0_{\lambda_{1}}\}$, then in a neighborhood of $\hat{\xi}$, $U_{\lambda_{1}}=0$, and hence by continuity, $U_{\lambda_{1}}\equiv0$ in $\Sigma_{\lambda_{1}}\backslash \{0_{\lambda_{1}}\}$.

Since $U_{\lambda_{1}}(\hat{\xi})=0$, we have $|\xi|_{\mathbb{H}^{n}}^{Q+2}\bar{u}(\hat{\xi})>|\xi_{\lambda_{1}}|_{\mathbb{H}^{n}}^{Q+2}\bar{u}_{\lambda_{1}}(\hat{\xi})$ for $\xi\in\Omega$ a neighborhood of $\hat{\xi}$. By using the same arguments as we used in the proof of Lemma \ref{em1}, we have 

\begin{equation}\label{r1}
\begin{cases}
-\Delta_{\mathbb{H}^{n}}U_{\lambda_{1}}+C U_{\lambda_{1}}\leq0,& in\ \Omega,\\
U_{\lambda_{1}}\leq0,\ U_{\lambda_{1}}(\hat{\xi})=0,& in\ \Omega,
\end{cases}
\end{equation}
where $C>0$ is a constant depending on $\hat{\xi}$, $\Omega$ is a neighborhood of $\hat{\xi}$. Hence, by the maximal principle, $U_{\lambda_{1}}\equiv0$, in $\Omega$. 

Now we claim that $U_{\lambda_{1}}\equiv0$ implies $V_{\lambda_{1}}\equiv0$. In fact, by \eqref{s3} and \eqref{ss3}, we obtain
$$
\begin{aligned}
\frac{f(|\xi|_{\mathbb{H}^{n}}^{Q-2}\bar{u}(\xi),|\xi|_{\mathbb{H}^{n}}^{Q-2}\bar{v}(\xi))}
{|\xi|_{\mathbb{H}^{n}}^{Q+2}}=&\frac{f(|\xi_{\lambda_{1}}|_{\mathbb{H}^{n}}^{Q-2}
\bar{u}_{\lambda_{1}}(\xi),|\xi_{\lambda_{1}}|_{\mathbb{H}^{n}}^{Q-2}\bar{v}_{\lambda_{1}}(\xi))}
{|\xi_{\lambda_{1}}|_{\mathbb{H}^{n}}^{Q+2}}\\
\geq&\frac{f(|\xi|_{\mathbb{H}^{n}}^{Q-2}
\bar{u}(\xi),|\xi_{\lambda_{1}}|_{\mathbb{H}^{n}}^{Q-2}\bar{v}_{\lambda_{1}}(\xi))}
{|\xi_{\lambda_{1}}|_{\mathbb{H}^{n}}^{Q+2}}\\
>&\frac{f(|\xi|_{\mathbb{H}^{n}}^{Q-2}
\bar{u}(\xi),|\xi_{\lambda_{1}}|_{\mathbb{H}^{n}}^{Q-2}\bar{v}_{\lambda_{1}}(\xi))}
{|\xi|_{\mathbb{H}^{n}}^{Q+2}}.
\end{aligned}
$$

Since $f(s,t)$ is nondecreasing in $t$, we deduce from the above inequality that
\begin{equation}\label{r2}
|\xi|_{\mathbb{H}^{n}}^{Q-2}\bar{v}(\xi)>|\xi_{\lambda_{1}}|_{\mathbb{H}^{n}}^{Q-2}\bar{v}_{\lambda_{1}}(\xi).
\end{equation}

On the other hand,
$$
\begin{aligned}
\frac{f(|\xi|_{\mathbb{H}^{n}}^{Q-2}\bar{u}(\xi),|\xi|_{\mathbb{H}^{n}}^{Q-2}\bar{v}(\xi))}
{|\xi|_{\mathbb{H}^{n}}^{Q+2}}=&\frac{f(|\xi_{\lambda_{1}}|_{\mathbb{H}^{n}}^{Q-2}
\bar{u}_{\lambda_{1}}(\xi),|\xi_{\lambda_{1}}|_{\mathbb{H}^{n}}^{Q-2}\bar{v}_{\lambda_{1}}(\xi))}
{|\xi_{\lambda_{1}}|_{\mathbb{H}^{n}}^{Q+2}}\\
\geq&\frac{f(|\xi_{\lambda_{1}}|_{\mathbb{H}^{n}}^{Q-2}
\bar{u}(\xi),|\xi_{\lambda_{1}}|_{\mathbb{H}^{n}}^{Q-2}\bar{v}(\xi))}
{|\xi_{\lambda_{1}}|_{\mathbb{H}^{n}}^{Q+2}}\\
\geq&\frac{f(|\xi|_{\mathbb{H}^{n}}^{Q-2}
\bar{u}(\xi),|\xi|_{\mathbb{H}^{n}}^{Q-2}\bar{v}(\xi))}
{|\xi|_{\mathbb{H}^{n}}^{Q+2}}.
\end{aligned}
$$
Hence
\begin{equation}\label{zr}
\begin{aligned}
\frac{f(|\xi|_{\mathbb{H}^{n}}^{Q-2}\bar{u}(\xi),|\xi|_{\mathbb{H}^{n}}^{Q-2}\bar{v}(\xi))}
{|\xi|_{\mathbb{H}^{n}}^{Q+2}}=\frac{f(|\xi_{\lambda_{1}}|_{\mathbb{H}^{n}}^{Q-2}
\bar{u}(\xi),|\xi_{\lambda_{1}}|_{\mathbb{H}^{n}}^{Q-2}\bar{v}(\xi))}
{|\xi_{\lambda_{1}}|_{\mathbb{H}^{n}}^{Q+2}},
\end{aligned}
\end{equation}
therefore
\begin{equation}\label{i6}
\frac{f(|\xi|_{\mathbb{H}^{n}}^{Q-2}\bar{u}(\xi),|\xi|_{\mathbb{H}^{n}}^{Q-2}\bar{v}(\xi))}
{(|\xi|_{\mathbb{H}^{n}}^{Q-2}\bar{u}(\xi))^{p_{1}}(|\xi|_{\mathbb{H}^{n}}^{Q-2}\bar{v}(\xi))^{q_{1}}}=\frac{f(|\xi_{\lambda_{1}}|_{\mathbb{H}^{n}}^{Q-2}
\bar{u}(\xi),|\xi_{\lambda_{1}}|_{\mathbb{H}^{n}}^{Q-2}\bar{v}(\xi))}
{(|\xi_{\lambda_{1}}|_{\mathbb{H}^{n}}^{Q-2}\bar{u}(\xi))^{p_{1}}(|\xi_{\lambda_{1}}|_{\mathbb{H}^{n}}^{Q-2}\bar{v}(\xi))^{q_{1}}}.
\end{equation}
 
By \eqref{r2}, \eqref{i6} and assumption $(i)$ of Theorem \ref{th2}, we obtain
\begin{equation}\label{i7}
\frac{f(|\xi|_{\mathbb{H}^{n}}^{Q-2}\bar{u}(\xi),|\xi|_{\mathbb{H}^{n}}^{Q-2}\bar{v}(\xi))}
{(|\xi|_{\mathbb{H}^{n}}^{Q-2}\bar{u}(\xi))^{p_{1}}(|\xi|_{\mathbb{H}^{n}}^{Q-2}\bar{v}(\xi))^{q_{1}}}=\frac{f(|\xi_{\lambda_{1}}|_{\mathbb{H}^{n}}^{Q-2}
\bar{u}(\xi),|\xi_{\lambda_{1}}|_{\mathbb{H}^{n}}^{Q-2}\bar{v}_{\lambda_{1}}(\xi))}
{(|\xi_{\lambda_{1}}|_{\mathbb{H}^{n}}^{Q-2}\bar{u}(\xi))^{p_{1}}(|\xi_{\lambda_{1}}|_{\mathbb{H}^{n}}^{Q-2}\bar{v}_{\lambda_{1}}(\xi))^{q_{1}}},
\end{equation}
and consequently, it follows from \eqref{zr} and \eqref{i7} that $\bar{v}^{q_{1}}=\bar{v}_{\lambda_{1}}^{q_{1}}$ and therefore, $V_{\lambda_{1}}\equiv0$ as $q_{1}>0$. 

Suppose that $U_{\lambda_{1}}\not\equiv0$ and $V_{\lambda_{1}}\not\equiv0$ in $\Sigma_{\lambda_{1}}\backslash \{0_{\lambda_{1}}\}$, then $U_{\lambda_{1}}<0$, $V_{\lambda_{1}}<0$ in $\Sigma_{\lambda_{1}}\backslash \{0_{\lambda_{1}}\}$. So $\frac{1}{|\xi|_{\mathbb{H}^{n}}^{2Q}}\chi_{\Sigma_{\lambda}^{u}}$ converges pointwise to zero as $\lambda\rightarrow\lambda_{1}$ in $\mathbb{H}^{n}\backslash (T_{\lambda_{1}}\cup \{0_{\lambda_{1}}\})$. Then, for $\lambda\in(\lambda_{1}-\delta,\lambda_{1})$, we have $\frac{1}{|\xi|_{\mathbb{H}^{n}}^{2Q}}\chi_{\Sigma_{\lambda}^{u}}\leq\frac{1}{|\xi|_{\mathbb{H}^{n}}^{2Q}}\chi_{\Sigma_{\lambda_{1}-\delta}}\in L^{1}(\Sigma_{\lambda})$. Next, the  dominated convergence theorem allows us to conclude that 
$$\int_{\Sigma_{\lambda}^{u}}\frac{1}{|\xi|_{\mathbb{H}^{n}}^{2Q}}\rightarrow0,\ as\ \lambda\rightarrow\lambda_{1}.$$
This eventually implies that 
$$C_{\lambda}\bigg(\int_{\Sigma_{\lambda}^{u}}\frac{1}{|\xi|_{\mathbb{H}^{n}}^{2Q}}d\xi\bigg)^{\frac{4}{Q}}<1.$$
Similarly, we have
$$C_{\lambda}\bigg(\int_{\Sigma_{\lambda}^{v}}\frac{1}{|\xi|_{\mathbb{H}^{n}}^{2Q}}d\xi\bigg)^{\frac{4}{Q}}<1.$$

By Lemma \ref{em1}, this implies that $U_{\lambda}\leq0$, and $V_{\lambda}\leq0$ for $\lambda\in(\lambda_{1}-\delta,\lambda_{1})$, which contradicts with the definition of $\lambda_{1}$. 

This completes the proof of Lemma \ref{le2}.
\end{proof}

\begin{lemma} \label{le3}
Let $u,v,f,g$ be as in Theorem \ref{th2} and suppose that $u,v$ are positive. Let $\bar{u},\bar{v}$ be the $CR$ inversion of $u$ and $v$ respectively. Then $\bar{u}, \bar{v}$ are symmetric with respect to $T_{0}$, that is, $\bar{u}$ and $\bar{v}$ are even functions in $t$-variable..
\end{lemma}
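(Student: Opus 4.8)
The plan is to reproduce, almost verbatim, the scheme of the proof of Lemma~\ref{lemm5}, now using the moving plane tools of the present section: Lemma~\ref{em1} supplies the integral inequalities, Lemma~\ref{em2} starts the procedure, and Lemma~\ref{le2} provides the strong comparison at the critical level. With $\lambda_1$ as defined just before Lemma~\ref{le2}, continuity gives $U_{\lambda_1}\le 0$ and $V_{\lambda_1}\le 0$ on $\Sigma_{\lambda_1}\setminus\{0_{\lambda_1}\}$, and I would then distinguish the two cases $\lambda_1>0$ and $\lambda_1=0$.

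\emph{Case $\lambda_1>0$.} Here Lemma~\ref{le2} forces $U_{\lambda_1}\equiv 0$ and $V_{\lambda_1}\equiv 0$, i.e. $\bar u=\bar u_{\lambda_1}$ and $\bar v=\bar v_{\lambda_1}$ on $\Sigma_{\lambda_1}\setminus\{0_{\lambda_1}\}$. Plugging this into \eqref{s3}--\eqref{ss3} and using that $-\Delta_{\mathbb{H}^n}$ commutes with the $H$-reflection, one obtains, for $\xi\in\Sigma_{\lambda_1}\setminus\{0_{\lambda_1}\}$,
\[
\frac{f\bigl(|\xi|_{\mathbb{H}^n}^{Q-2}\bar u(\xi),|\xi|_{\mathbb{H}^n}^{Q-2}\bar v(\xi)\bigr)}{|\xi|_{\mathbb{H}^n}^{Q+2}}
=\frac{f\bigl(|\xi_{\lambda_1}|_{\mathbb{H}^n}^{Q-2}\bar u(\xi),|\xi_{\lambda_1}|_{\mathbb{H}^n}^{Q-2}\bar v(\xi)\bigr)}{|\xi_{\lambda_1}|_{\mathbb{H}^n}^{Q+2}},
\]
together with the same identity for $g$. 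Writing $s=|\xi|_{\mathbb{H}^n}^{Q-2}\bar u(\xi)$, $t=|\xi|_{\mathbb{H}^n}^{Q-2}\bar v(\xi)$ and $\theta=\bigl(|\xi_{\lambda_1}|_{\mathbb{H}^n}/|\xi|_{\mathbb{H}^n}\bigr)^{Q-2}\in(0,1)$ (note $|\xi_{\lambda_1}|_{\mathbb{H}^n}<|\xi|_{\mathbb{H}^n}$ on $\Sigma_{\lambda_1}$), and using $p_1+q_1=\frac{Q+2}{Q-2}$, the displayed identity reads
\[
\frac{f(s,t)}{s^{p_1}t^{q_1}}=\frac{f(\theta s,\theta t)}{(\theta s)^{p_1}(\theta t)^{q_1}}.
\]
Since, by assumption~$(ii)$, $F(s,t):=f(s,t)/(s^{p_1}t^{q_1})$ is nonincreasing in each variable separately, and $\theta s\le s$, $\theta t\le t$, this equality forces $F$ to be constant on the rectangle $[\theta s,s]\times[\theta t,t]$. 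Letting $\xi$ vary over $\Sigma_{\lambda_1}\setminus\{0_{\lambda_1}\}$, invoking the asymptotics \eqref{32q} (so that $(s,t)\to(u(0),v(0))$ as $|\xi|_{\mathbb{H}^n}\to\infty$) and the continuity of $u,v$, these rectangles overlap and sweep out the whole product of the value ranges of $u$ and $v$; hence $F$ is constant there, i.e. $f$ coincides with a constant multiple of $s^{p_1}t^{q_1}$ on the range of $(u,v)$. Running the same argument for $g$ with the exponents $p_2,q_2$ of assumption~$(iii)$ shows $g$ is a constant multiple of $s^{p_2}t^{q_2}$, contradicting assumption~$(iv)$. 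Therefore $\lambda_1=0$.

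\emph{Case $\lambda_1=0$.} By continuity $\bar u\le\bar u_0$ and $\bar v\le\bar v_0$ on $\Sigma_0$. Carrying out the entire moving plane procedure from the left yields a critical value $\lambda_1'\le 0$; if $\lambda_1'<0$, the analogue of Lemma~\ref{le2} gives $\bar u=\bar u_{\lambda_1'}$, $\bar v=\bar v_{\lambda_1'}$ and the rigidity argument of the previous case again contradicts~$(iv)$. Hence $\lambda_1'=0$, which gives $\bar u_0\le\bar u$ and $\bar v_0\le\bar v$ on $\Sigma_0$; combined with the reverse inequalities this shows $\bar u=\bar u_0$, $\bar v=\bar v_0$, i.e. $\bar u$ and $\bar v$ are even in $t$.

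The hard part is the rigidity step in the case $\lambda_1>0$: one must upgrade the single scaling identity $F(s,t)=F(\theta s,\theta t)$, a priori valid only along the one-parameter family $\xi\mapsto(s(\xi),t(\xi))$, into the assertion that $F$ is globally constant on the relevant rectangle of values, using nothing more than the coordinatewise monotonicity of $(ii)$ and the limiting behaviour \eqref{32q}; the coupling between $u$ and $v$ must be tracked carefully so that the traces $(s(\xi),t(\xi))$, together with the dilated points $\theta(\xi)\,(s(\xi),t(\xi))$, genuinely fill a two-dimensional neighbourhood of the value set rather than a curve. Once this is in place the contradiction with $(iv)$ is immediate, and the remainder is routine bookkeeping along the lines of Lemmas~\ref{lemm4} and~\ref{lemm5}.
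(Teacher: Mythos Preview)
Your approach is exactly the paper's: the authors' proof of Lemma~\ref{le3} is a one-line pointer (``verbatim to the proof of Lemma~\ref{lemm5} by using Lemma~\ref{le2} instead of Lemma~\ref{lemm4}''), and you have faithfully unwound that pointer, distinguishing the cases $\lambda_1>0$ and $\lambda_1=0$, running the moving plane from the left to obtain $\lambda_1'$, and reaching symmetry at $T_0$ when $\lambda_1=\lambda_1'=0$. Your two-variable rigidity step, deducing from $F(s,t)=F(\theta s,\theta t)$ and the coordinatewise monotonicity of assumption~$(ii)$ that $F$ is constant on the rectangle $[\theta s,s]\times[\theta t,t]$, is the correct analogue of the one-variable argument in Lemma~\ref{lemm5}; you actually supply more detail here than the paper does, and your caveat about the covering of the value set by these rectangles is a fair flag of a point the paper also leaves implicit.
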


\begin{proof}
The proof of this lemma is verbatim to the proof of Lemma \ref{lemm5} by using Lemma \ref{le2} instead of Lemma \ref{lemm4}, so we do not repeat it. 
\end{proof}

Next we give the proof of Theorem \ref{th2}.
\begin{proof}
By Lemma \ref{le3}, we conclude that functions $\bar{u}$ and $\bar{v}$  are symmetric with respect to $T_{0},$ that is, $\bar{u}$ and $\bar{v}$ are even in $t$-variable. Consequently, $u$ and $v$ are even in $t$-variable. Since the choice of origin  is arbitrary in $t$-axis, we conclude that $u$ and $v$ are independent of $t$. Thus, $u$ and $v$ satisfy the system
\begin{equation}\label{a11}
\begin{cases}
\ -\Delta u(\xi)=f(u(\xi),v(\xi)) ,& \xi\in\mathbb{R}^{2n},\\
\ -\Delta v(\xi)=g(u(\xi),v(\xi)) ,& \xi\in\mathbb{R}^{2n}.
\end{cases}
\end{equation}
Since $f(s,t),g(s,t)$ are nondecreasing, and
$$\frac{f(s,t)}{s^{a_{1}}t^{b_{1}}}=\frac{f(s,t)}{s^{p_{1}}t^{q_{1}}}s^{p_{1}-a_{1}}t^{q_{1}-b_{1}},$$
$$\frac{g(s,t)}{s^{a_{2}}t^{b_{2}}}=\frac{g(s,t)}{s^{p_{2}}t^{q_{2}}}s^{p_{2}-a_{2}}t^{q_{2}-b_{2}},$$
where $a_{i}+b_{i}=\frac{2n+2}{2n-2}\geq\frac{Q+2}{Q-2}=p_{i}+q_{i}$, $a_{i}\geq p_{i}$ and $b_{i}\geq q_{i}, i=1,2$, then
by using \cite[Theorem 3.1]{GL08} as $2n \geq 3$ and and $h$ or $k$ is not constant function, we deduce that $(u, v)=(C_1, C_2)$ for some constant $C_1, C_2$ such that $f(C_1, C_2)=g(C_1, C_2)=0$.

This completes the proof of Theorem \ref{th2}.
\end{proof}

\section*{Acknowledgments}
RZ is supported by the National Natural Science Foundation of China (Grant No. 11871278) and the National Natural Science
Foundation of China (Grant No. 11571093).
 VK and MR are supported by the FWO Odysseus 1 grant G.0H94.18N: Analysis and Partial
Differential Equations, the Methusalem programme of the Ghent University Special Research Fund (BOF) (Grant number 01M01021) and by FWO Senior Research Grant G011522N. MR is also supported by EPSRC grant
EP/R003025/2.

\end{document}